\newcommand{\R}{{\mathbb  R}}
\newcommand{\Z}{{\mathbb  Z}}
\newcommand{\N}{{\mathbb  N}}
\newcommand{\C}{{\mathbb  C}}
\newcommand{\dd}{{d}}
\newcommand{\ID}{{\mathbf{1}}}
\newcommand{\OID}{{\mathbf{I}}}
\newcommand{\fdot}{\,\cdot\,}
\newcommand{\wt}{\widetilde}
\newcommand{\cH}{\mathcal{H}}
\newcommand{\f}{\varphi}
\newcommand{\e}{\varepsilon}
\DeclareMathOperator{\clos}{clos}
\DeclareMathOperator{\conv}{conv}
\DeclareMathOperator{\wclos}{w-clos}
\DeclareMathOperator{\Ker}{Ker}
\DeclareMathOperator{\Ran}{Ran}
\DeclareMathOperator{\spa}{span}
\DeclareMathOperator{\im}{Im}
\DeclareMathOperator{\re}{Re}
\DeclareMathOperator{\supp}{supp}
\DeclareMathOperator{\dist}{dist}
\newcommand{\la}{\lambda}
\newcommand{\ci}[1]{_{ {}_{\scriptstyle #1}}}
\newcommand{\ti}[1]{_{\scriptstyle \text{\rm #1}}}
\chardef\mathlig@atcode\count255
\def\actively#1#2{\begingroup\uccode`\~=`#2\relax\uppercase{\endgroup#1~}}
\def\mathlig@gobble{\afterassignment\mathlig@next@cmd\let\mathlig@next= }
\def\mathlig@delim{\mathlig@delim}
\def\mathlig@defcs#1{\expandafter\def\csname#1\endcsname}
\def\mathlig@let@cs#1#2{\expandafter\let\expandafter#1\csname#2\endcsname}
\def\mathlig@appendcs#1#2{\expandafter\edef\csname#1\endcsname{\csname#1\endcsname#2}}
\def\mathlig#1#2{\mathlig@checklig#1\mathlig@end\mathlig@defcs{mathlig@back@#1}{#2}\ignorespaces}
\def\mathlig@checklig#1#2\mathlig@end{%
 \expandafter\ifx\csname mathlig@forw@#1\endcsname\relax
 \expandafter\mathchardef\csname mathlig@back@#1\endcsname=\mathcode`#1%
 \mathcode`#1"8000\actively\def#1{\csname mathlig@look@#1\endcsname}%
 \mathlig@dolig#1\mathlig@delim
\fi
\mathlig@checksuffix#1#2\mathlig@end
}
\def\mathlig@checksuffix#1#2\mathlig@end{%
\ifx\mathlig@delim#2\mathlig@delim\relax\else\mathlig@checksuffix@{#1}#2\mathlig@end\fi
}
\def\mathlig@checksuffix@#1#2#3\mathlig@end{%
\expandafter\ifx\csname mathlig@forw@#1#2\endcsname\relax\mathlig@dosuffix{#1}{#2}\fi
\mathlig@checksuffix{#1#2}#3\mathlig@end
}
\def\mathlig@dosuffix#1#2{%
\mathlig@appendcs{mathlig@toks@#1}{#2}%
\mathlig@dolig{#1}{#2}\mathlig@delim
}
\def\mathlig@dolig#1#2\mathlig@delim{%
 \mathlig@defcs{mathlig@look@#1#2}{%
 \mathlig@let@cs\mathlig@next{mathlig@forw@#1#2}\futurelet\mathlig@next@tok\mathlig@next}%
 \mathlig@defcs{mathlig@forw@#1#2}{%
  \mathlig@let@cs\mathlig@next{mathlig@back@#1#2}%
  \mathlig@let@cs\checker{mathlig@chck@#1#2}%
  \mathlig@let@cs\mathligtoks{mathlig@toks@#1#2}%
  \expandafter\ifx\expandafter\mathlig@delim\mathligtoks\mathlig@delim\relax\else
  \expandafter\checker\mathligtoks\mathlig@delim\fi
  \mathlig@next
 }%
 \mathlig@defcs{mathlig@toks@#1#2}{}%
 \mathlig@defcs{mathlig@chck@#1#2}##1##2\mathlig@delim{%
  \ifx\mathlig@next@tok##1%
   \mathlig@let@cs\mathlig@next@cmd{mathlig@look@#1#2##1}\let\mathlig@next\mathlig@gobble
  \fi
  \ifx\mathlig@delim##2\mathlig@delim\relax\else
   \csname mathlig@chck@#1#2\endcsname##2\mathlig@delim
  \fi
 }%
%
 \ifx\mathlig@delim#2\mathlig@delim\else
  \mathlig@defcs{mathlig@back@#1#2}{\csname mathlig@back@#1\endcsname #2}%
 \fi
}%
\mathchardef\ordinarycolon\mathcode`\:
\def\vcentcolon{\mathrel{\mathop\ordinarycolon}}
\numberwithin{equation}{section}
\theoremstyle{plain}
\newtheorem{theo}{Theorem}[section]
\newtheorem{cor}[theo]{Corollary}
\newtheorem{lem}[theo]{Lemma}
\newtheorem{prop}[theo]{Proposition}
\theoremstyle{definition}
\theoremstyle{remark}
\newtheorem*{ex*}{Example}
\theoremstyle{remark}
\newtheorem*{exs*}{Examples}
\theoremstyle{remark}
\newtheorem*{rems*}{Remarks}
\theoremstyle{remark}
\newtheorem*{rem*}{Remark}
\title[Rank one perturbations]{Rank one perturbations and singular integral operators}
\author{Constanze~Liaw}
\address{Department of Mathematics, Brown University, 151 Thayer 
Str./Box 1917,      
 Providence, RI  02912, USA }
\email{conni@math.brown.edu}
\author{Sergei~Treil}
\thanks{The second author is  partially supported by the NSF grant DMS-0800876.
}
\address{Department of Mathematics, Brown University, 151 Thayer 
Str./Box 1917,      
 Providence, RI  02912, USA }
\email{treil@math.brown.edu}
\urladdr{http://www.math.brown.edu/\~{}treil}
\begin{document}

\begin{abstract}
We consider  rank one perturbations $A_\alpha=A+\alpha(\fdot,\varphi)\varphi$ of a self-adjoint operator $A$ with cyclic vector $\varphi\in\mathcal H_{-1}(A)$ on a Hilbert space $\mathcal H$. The spectral representation of the perturbed operator $A_\alpha$ is given by a singular integral operator of special form. Such operators exhibit what we call 'rigidity' and are connected with two weight estimates for the Hilbert transform.

Also, some results about two weight estimates of Cauchy (Hilbert) transforms are proved. In particular, it is proved that the regularized Cauchy transforms $T_\e$ are uniformly (in $\e$) bounded operators from $L^2(\mu)$ to $L^2(\mu_\alpha)$, where $\mu$ and $\mu_\alpha$ are the spectral measures of $A$ and $A_\alpha$, respectively. 

As an application, a sufficient condition for $A_\alpha$ to have a pure absolutely continuous spectrum on a closed interval is given in terms of the density of the spectral measure of $A$ with respect to $\varphi$. Some examples, like Jacobi matrices and Schr\"odinger operators with $L^2$ potentials are considered.

\end{abstract}

\maketitle

\

\


\section{Introduction}
\subsection{Setup of rank one perturbations}\label{setup}
Let $A$ be a self-adjoint (possibly unbounded)  operator on a Hilbert space $\mathcal H$. We are considering a family of rank-one perturbations $A+\alpha(\,\cdot\,,\f)\f$. Here, if the operator $A$ is bounded, $\f$ is a vector in $\mathcal H$. For unbounded $A$, we consider the wider class of so-called form bounded perturbations where we assume $\varphi\in\mathcal H_{-1}(A)\supset \mathcal H$, so the perturbation $\alpha (\fdot, \f)\f$ can be unbounded (see subsection \ref{SMAIN} below for definition).

It is possible that the results of the paper hold for a wider class of perturbations than form bounded, but we restricted ourselves  to avoid problems defining the perturbation, which can be non-unique.

Without loss of generality, we can assume that $A$  has simple spectrum and that  $\varphi$ is a cyclic vector for $A$, i.e.~that the linear span of $\{(A-z\OID)^{-1}\varphi:z\in\C\}$ is dense in $\mathcal H$. According to the Spectral Theorem, $A$ is unitary equivalent to a multiplication operator $M_t:f(t)\mapsto tf(t)$ on $ L^2(\mu)$ for some (non-unique) Borel measure $\mu$. We make the spectral measure unique by letting $\mu$ be the spectral measure corresponding to $\varphi$, i.e.~$\mu:=\mu^\varphi$, where $\mu^\f$ is the unique measure such that
\begin{align*}
&&\int\ci\R \frac{1}{t-\lambda}\,\dd\mu^\varphi(t)=((A-\lambda\OID)^{-1}\varphi,\varphi)\ci{\mathcal H}
\qquad\forall\lambda\in\C\setminus\sigma(A).
\end{align*}
Existence and uniqueness of such $\mu$ is guaranteed by the Spectral Theorem.

It is easy to see that in this representation vector $\varphi$ is represented by the function $\ID$, meaning that if $U: \cH\to L^2(\mu)$ is the unitary operator such that $M_t = U A U^{-1}$, then $U\f= \ID$.  
As will be explained later in section \ref{SMAIN}, in this representation the assumption $\f\in \mathcal{H}_{-1}(A)$ means simply that $\int\ci\R (1+|t|)^{-1} d\mu(t)<\infty$.

Without loss of generality, assume henceforth that $A=M_t$ on $ L^2(\mu)$, $\int\ci\R (1+|t|)^{-1} d\mu(t)<\infty$, and $\varphi\equiv\ID$.
Consider the family of self-adjoint rank one perturbations
\[
A_\alpha := A +\alpha(\,\cdot\,,\f)\f\qquad\forall \alpha \in\R.
\]
In the case of form bounded perturbations this formal definition of $A_\alpha$ can be made precise, see e.g.~\cite{AK1}.

\begin{rem*}
 By assuming simplicity of the spectrum, i.e.~ the existence of a cyclic vector $\varphi$ for $A$, we do not forfeit generality. Indeed, if there is no cyclic vector, we decompose $\mathcal H$ into an orthogonal sum of Hilbert spaces $\mathcal H=\widetilde{\mathcal H}\oplus\widehat{\mathcal H}$ such that $\varphi$ is cyclic for the restriction $A|\ci{\widetilde{\mathcal H}}$. Then for all $\alpha\in\R$ we have $A_\alpha|\ci{\widehat{\mathcal H}}=A|\ci{\widehat{\mathcal H}}$, and it suffices to investigate the behavior of $A_\alpha$ on $\widetilde{\mathcal H}$.
\end{rem*}

It is well-known that $\f$ is cyclic for operators $A_\alpha$ as well, so $A_\alpha$ are unitary equivalent to multiplication by the independent variable in the spaces $L^2(\mu_\alpha)$. For a proof of the cyclicity confer the proof of Theorem \ref{t-repr-V} below for bounded $A$ and Lemma \ref{cyc} below in the case of form bounded perturbations.

Without loss of generality, let us make the measure $\mu_\alpha$ unique by choosing $\mu_\alpha$ to be the spectral measure corresponding to the vector $\ID$ in each $L^2(\mu_\alpha)$. So $\f$ is represented by $\ID$ in each $L^2(\mu_\alpha)$.

\subsection{Notation}
We will use the symbol $t$ for the independent variable in $L^2(\mu)$ and $s$ for the independent variable in $L^2(\mu_\alpha)$, so $M_t$ and $M_s$ are the multiplication by the independent variable in $L^2(\mu)$ and $L^2(\mu_\alpha)$, respectively.  Slightly abusing notation we will use subscripts $t$ or $s$ to indicate whether we are treating a function as an element of $L^2(\mu)$ or $L^2(\mu_\alpha)$ for regular perturbations, or as a point in $\cH_{-1}(A)$ for singular form bounded perturbations. Thus $\ID_t$ means the function  $\f\equiv \ID$, treated as a point in $L^2(\mu)$, while $\ID_s$ stands for the same function considered to be an element of $L^2(\mu_\alpha)$.

\subsection{Outline}
In section \ref{PROOF1}, we obtain a formula for the spectral representation of the perturbed operator $A_\alpha$. As a partial converse of this representation theorem, we show a certain rigidity for such operators. That is, integral operators represented by such a formula are unitary up to certain scaling and give rise to a rank one perturbation setting.

In section \ref{SIO}, we concentrate on singular integral operators. By a standard approximation argument, we show that the spectral representation of $A_\alpha$ is a singular integral operator. We obtain an alternative formula for the spectral representation of $A_\alpha$. We prove that certain regularizations of the Hilbert transform are uniformly bounded from $L^2(\mu)$ to $L^2(\nu)$ under very weak conditions on the measures $\mu$ and $\nu$. In particular, we allow non-doubling measures.

As an application of the representation theorem and the statements on singular integral operators, we prove, in section \ref{absSEC}, two results about the absence of embedded singular spectrum in the rank one perturbation setting. 

In section \ref{REXA}, we present examples of rank one perturbations. In all examples, the unperturbed operator $A$ has arbitrary embedded singular spectrum which resolves completely as soon as we 'switch on' the perturbation. The unperturbed operators include Hilbert--Schmidt perturbations of the free Jacobi operator, as well as Schr\"odinger operators with $L^2$ potentials.

\section{Spectral representation of the perturbation $A_\alpha$ and its properties}\label{PROOF1}
As mentioned above, by the Spectral Theorem, operators $A_\alpha$ are unitary equivalent to the multiplication $M_s$ by the independent variable $s$ in the space $L^2(\mu_\alpha)$, i.e.~there exists a unitary operator $V_\alpha:L^2(\mu)\to L^2(\mu_\alpha)$ such that $V_\alpha A_\alpha = M_s V_\alpha$.

Operator $V_\alpha$ is the spectral representation of $A_\alpha$. The measure $\mu_\alpha$ contains all spectral information of $A_\alpha$. Indeed, it is shown below that $\ID_t$ is cyclic for $A_\alpha$.

Let us give an integral representation for this  unitary operator. Without loss of generality we assume that $A$ is the multiplication operator $M_t$ by the independent variable $t$ in $L^2(\mu)$, $A_\alpha= A + \alpha(\,\cdot\,,\f)\f$, $\f\equiv\ID_t$. We assume that $A_\alpha$ is a form bounded perturbation, i.e.~$\int (1+|t|)^{-1}d\mu(t) <\infty$. We consider $\mu_\alpha$ to be the spectral measure of $A_\alpha$ corresponding to $\ID_t$.

\begin{theo}[Representation Theorem]\label{t-repr-V}
Assume the above assumptions. The spectral representation $V_\alpha: L^2(\mu)\to L^2(\mu_\alpha)$ of $A_\alpha$ is given by
\begin{equation}
\label{repr-V}
V_\alpha f(s) = f(s) -\alpha \int\frac{f(s)-f(t)}{s-t}\,d\mu(t)
\end{equation}
for all compactly supported $C^1$ functions $f$.
\end{theo}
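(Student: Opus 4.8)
The plan is to identify $V_\alpha$ through its action on resolvent vectors and then propagate the formula to compactly supported $C^1$ functions. Recall that $V_\alpha$ is the unique unitary $L^2(\mu)\to L^2(\mu_\alpha)$ intertwining $A_\alpha$ with $M_s$ and sending $\f=\ID_t$ to $\ID_s$. Since $\ID_t$ is cyclic for $A_\alpha$ — for bounded $A$ this is because the perturbation only adds multiples of $\f$, so the cyclic subspaces $\clos\spa\{A^n\f\}$ and $\clos\spa\{A_\alpha^n\f\}$ coincide, while in the form bounded case one invokes Lemma~\ref{cyc} — the operator $V_\alpha$ is completely determined by its values on the dense set of resolvent vectors $g_z:=(A-z\OID)^{-1}\ID_t=(t-z)^{-1}$, $z\in\C\setminus\R$. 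So the first step I would carry out is to compute this action explicitly.

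By the rank one (Sherman--Morrison) resolvent identity together with the Aronszajn--Krein formula $((A_\alpha-z\OID)^{-1}\f,\f)=F(z)/(1+\alpha F(z))$, where $F(z)=\int(t-z)^{-1}d\mu(t)$, one gets $(A_\alpha-z\OID)^{-1}\ID_t=(1+\alpha F(z))^{-1}g_z$. Applying $V_\alpha$ and using $V_\alpha(A_\alpha-z\OID)^{-1}\ID_t=(M_s-z)^{-1}\ID_s=(s-z)^{-1}$ yields
\[
V_\alpha g_z=(1+\alpha F(z))\,\frac{1}{s-z}.
\]
A direct partial fraction computation shows that the right-hand side of \eqref{repr-V} evaluated at $f=g_z$ equals this same expression, so the Representation formula already holds on the dense span of the $g_z$.

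To reach an arbitrary compactly supported $C^1$ function $f$, I would write $f=f(A)\ID_t$ and use the intertwining property to get $V_\alpha f(A_\alpha)\ID_t=f(s)$; it then suffices to show $V_\alpha[f(A)-f(A_\alpha)]\ID_t=-\alpha\int\frac{f(s)-f(t)}{s-t}\,d\mu(t)$. Via the Helffer--Sjöstrand formula, with an almost analytic extension $\wt f$ and area measure $dA$,
\[
f(A)-f(A_\alpha)=\frac1\pi\int\bar\partial\wt f(z)\,\big[(A-z\OID)^{-1}-(A_\alpha-z\OID)^{-1}\big]\,dA(z),
\]
I apply $V_\alpha$ and the identity above to obtain $\frac\alpha\pi\int\bar\partial\wt f(z)\,F(z)\,(s-z)^{-1}\,dA(z)$. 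Expanding $F$, a partial fraction identity in $z$ together with Fubini and the reproducing property $\frac1\pi\int\bar\partial\wt f(z)(\lambda-z)^{-1}\,dA(z)=f(\lambda)$ collapse this to $-\alpha\int\frac{f(s)-f(t)}{s-t}\,d\mu(t)$. The latter integral converges absolutely since the difference quotient is bounded near $t=s$ and $O(1/|t|)$ at infinity, which is $\mu$-integrable because $\int(1+|t|)^{-1}d\mu<\infty$.

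The hard part will be the last step for \emph{merely} $C^1$ data: the first order almost analytic extension gives only $\bar\partial\wt f=O(1)$ near $\R$, so the area integral is borderline against the resolvent bound $\|(A-z)^{-1}\|\sim|\im z|^{-1}$. I would therefore first prove the identity for $f\in C^2_c$ (a second order extension gives $\bar\partial\wt f=O(|\im z|)$, hence absolute convergence of both the area integral and the ensuing Fubini), and then pass to $C^1_c$ by approximating $f$ in $C^1$ norm by $C^2_c$ functions with a fixed compact support: on the left $V_\alpha$ is continuous and $f_n\to f$ in $L^2(\mu)$, while on the right the difference quotients converge by dominated convergence. Justifying this limit and the Fubini interchange is the main technical point; everything else is the bookkeeping of the two partial fraction identities.
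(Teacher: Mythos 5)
Your proposal is correct, and its first half coincides with the paper's own argument for unbounded $A$: the paper also starts from the resolvent formula \eqref{singres} applied to $f=\f=\ID_t$, derives $V_\alpha (t-\lambda)^{-1} = \bigl[1+\alpha F(\lambda)\bigr](s-\lambda)^{-1}$, and checks by the same partial-fraction identity that this agrees with \eqref{repr-V} on the span of resolvent functions. Where you genuinely diverge is the extension to $C^1_0$. The paper stays inside superpositions of resolvents: it writes the Poisson kernel as a combination of two resolvent kernels, approximates $P_\e*f$ by Riemann sums lying in the resolvent span, and then lets $\e\to 0$, using a dominated-convergence lemma (Lemma~\ref{lim-repr-V}) at each stage; this reaches $C^1_0$ directly because the only estimates needed are $|f|\le C/(1+|x|)$ and $|f'|\le C$. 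You instead invoke the Helffer--Sj\"ostrand calculus, which forces the detour through $C^2_c$ (you correctly diagnose that $\bar\partial\wt f=O(1)$ for a first-order extension is borderline against $\|(A-z\OID)^{-1}\|\sim|\im z|^{-1}$) followed by a $C^1$-norm approximation that is, in substance, the paper's Lemma~\ref{lim-repr-V} again. What your route buys is a conceptually clean reduction of the whole identity to the single resolvent computation; what it costs is the extra regularity step and two technical points you should make explicit: (i) in the form bounded case $\ID_t\notin\cH$, so $f(A_\alpha)\ID_t$ and the Helffer--Sj\"ostrand integral applied to $\ID_t$ must be interpreted via the multiplication model and the fact that $(A-z\OID)^{-1}$, $(A_\alpha-z\OID)^{-1}$ map $\cH_{-1}$ into $\cH$ with locally $O(|\im z|^{-1})$ bounds, so the Bochner integral converges; and (ii) in the final limit one must pass to a subsequence of $V_\alpha f_n$ converging $\mu_\alpha$-a.e.\ in order to match the pointwise limit of the right-hand side. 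Neither is a gap, only bookkeeping. Note also that for bounded $A$ the paper gives a separate, more elementary argument via monomials and the commutation relation, which your resolvent-based argument subsumes.
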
 


Integral operators represented by formula \eqref{repr-V} are very interesting objects, probably deserving more careful investigation. Let us mention one property, which can be understood as a converse to the latter representation theorem.

\begin{theo}[Rigidity Theorem]
\label{rigTM}
Let measure $\mu$ on $\R$ be supported on at least two distinct points and satisfy $\int(1+|t|)^{-1}\,d\mu(t) <\infty$. 
Let $V$ be defined on compactly supported $C^1$ functions $f$ by formula \eqref{repr-V}.

Assume $V$ extends to a bounded operator from $L^2(\mu)$ to $L^2(\nu)$ and assume $\Ker V=\{0\}$.

Then there exists a function $h$ such that $1/h\in L^\infty(\nu)$, and $M_h V$ is a unitary operator from $L^2(\mu)\to L^2(\nu)$ (equivalently, that $V:L^2(d\mu)\to L^2( |h|^{2}\,d\nu)$ is unitary). 
 
Moreover, the unitary operator $U:=M_hV$ gives the spectral representation of the operator $A_\alpha := M_t + \alpha (\fdot, \f)\f$, $\f\equiv \ID$, in $L^2(\mu)$, namely $UA_\alpha = M_s U$, where $M_s$ is the multiplication by the independent variable $s$ in $L^2(\nu)$. 
\end{theo}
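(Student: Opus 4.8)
The plan is to treat $V$ as a given bounded injective operator defined by the formula \eqref{repr-V} and to reverse-engineer the rank-one perturbation structure from it. The key structural observation is that the formula \eqref{repr-V} has a built-in intertwining property between multiplication operators. First I would compute, for compactly supported $C^1$ functions $f$, the commutator between $V$ and multiplication, i.e.\ compare $V(M_t f)$ with $M_s(Vf)$. A direct calculation using the identity $\frac{sf(s)-tf(t)}{s-t} = f(s) + t\,\frac{f(s)-f(t)}{s-t}$ should yield a relation of the form $M_s V - V M_t = \alpha(\,\cdot\,, \ID_t)\ci{L^2(\mu)}\, g$ for some fixed function $g$ on the target side; that is, $M_s V - V M_t$ is a \emph{rank one} operator. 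This is the algebraic heart of the matter: it says $V$ intertwines $M_t$ with $M_s$ up to a rank one defect that encodes exactly the perturbation $\alpha(\fdot,\f)\f$.

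Next I would exploit injectivity ($\Ker V = \{0\}$) together with the rank one defect to pin down the scaling function $h$. The idea is that the operator $M_h V$ should intertwine $A_\alpha$ with $M_s$ exactly, i.e.\ the correction term must be absorbed by the weight $h$. Concretely, I would seek $h$ so that $M_h V A_\alpha = M_s M_h V$; expanding $A_\alpha = M_t + \alpha(\fdot,\f)\f$ and using the rank one commutator computed above, this becomes a scalar functional equation determining $h$ up to the data $g$ and $\alpha$. The hypothesis that $\mu$ is supported on at least two points guarantees that $M_t$ is genuinely non-scalar, so the intertwining forces $h$ to be essentially unique and nonvanishing; I would then verify $1/h \in L^\infty(\nu)$ from the boundedness of $V$ and the explicit form of the defect.

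Having produced $h$ with $U := M_h V$ satisfying $U A_\alpha = M_s U$, it remains to show $U$ is unitary. Since $U$ already intertwines a self-adjoint operator $A_\alpha$ (which by construction is unitarily equivalent to $M_s$ on its spectral representation) with $M_s$, and since $\ID$ is cyclic for $A_\alpha$, the map $U$ sends a cyclic spectral model to another. I would argue that $U$ is isometric by checking it preserves the cyclic-vector normalization: the spectral measure of $A_\alpha$ with respect to $\f$ must match $|h|^2 d\nu$, which is exactly the assertion that $V : L^2(\mu) \to L^2(|h|^2 d\nu)$ is unitary. Injectivity of $V$ plus density of the $C^1$ functions then upgrades isometry to surjectivity, giving unitarity.

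The main obstacle I anticipate is the rigorous justification of the rank one commutator identity and the determination of $h$ at the level of \emph{unbounded, merely form-bounded} data, where the integral in \eqref{repr-V} is only defined a priori on compactly supported $C^1$ functions and the kernel $\frac{1}{s-t}$ is singular. The formal algebra is clean, but making the intertwining relation precise requires controlling domains and approximating general $L^2(\mu)$ elements by smooth compactly supported ones while keeping the rank one defect term well-defined; this is where I expect the singular-integral estimates (and the uniform bounds on regularizations alluded to in the outline) to do the real work.
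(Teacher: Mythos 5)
Your first step---showing that $M_sV-VM_t$ is the rank one operator $\alpha(\fdot,\ID_t)\ci{L^2(\mu)}\ID_s$, i.e.\ that $M_sV=VA_\alpha$---is exactly how the paper begins (identity \eqref{MS}, and its resolvent version \eqref{OPEQ} for unbounded data). After that, however, your plan has two genuine gaps.

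First, the intertwining relation cannot ``pin down'' $h$. Once $M_sV=VA_\alpha$ is established, the identity $M_s(M_hV)=(M_hV)A_\alpha$ holds for \emph{every} $h$, simply because $M_h$ commutes with $M_s$; there is no scalar functional equation left over, and the hypothesis that $\supp\mu$ contains two points plays no role here. The function $h$ is determined not by intertwining but by the requirement of unitarity. The paper's route: combine $M_sV=VA_\alpha$ with its adjoint $V^*M_s=A_\alpha V^*$ to conclude that $VV^*$ commutes with $M_s$, hence $VV^*=M_\psi$ with $0\le\psi\in L^\infty(\nu)$ (Lemma \ref{l-commute}); then $h:=\psi^{-1/2}$, and $1/h=\psi^{1/2}\in L^\infty(\nu)$ comes for free from boundedness of $V$. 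The polar decomposition $V^*=\widetilde U\,|V^*|=\widetilde U M_{\psi^{1/2}}$ then exhibits $U=\widetilde U^*=M_hV$ as a candidate unitary.

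Second, your passage from isometry to surjectivity fails: ``injectivity of $V$ plus density of the $C^1$ functions'' concerns the domain, not the range, and an injective isometry need not be onto. What is actually needed is dense range, i.e.\ $\Ker V^*=\{0\}$, and this is the one place where the two-point support hypothesis is genuinely used (Lemma \ref{l-kernel}): since $VV^*=M_\psi$, the subspace $\Ker V^*$ is a spectral subspace $\chi\ci E L^2(\nu)$ of $M_s$; if $\nu(E)>0$ one picks $a$ splitting $\supp\mu$, tests $f=\chi\ci{E\cap[a,b]}$ against $Vg$ for a bump $g$ supported on the other side of $a$, and the single sign of the kernel $1/(s-t)$ on the product of the two supports forces $(f,Vg)\ne0$, a contradiction. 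Without an argument of this kind your $U$ could a priori be a non-surjective isometry, and the proposal as written does not supply one.
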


Theorem \ref{rigTM} will be proved in subsection \ref{PROOFR} below.

\subsection{Proof of Theorem \ref{t-repr-V} for bounded $A$}\label{re1}
Assume the hypotheses of the representation theorem, Theorem \ref{t-repr-V}, and let $A$ be bounded. Recall that for bounded $A$ we have $\ID_t\in L^2(\mu)$, by assumption. In fact, bounded $A$ implies $\cH_{-1}(A)=\cH(A)=L^2(\mu)$, see section \ref{SMAIN} below.

Let us show that the vector $\ID_t$ is cyclic for $A_\alpha$. 

Recall that for a bounded operator $A=A^*$, the linear span of $\{ (A-\la \OID)^{-1} \f: \la\in\C \setminus \R\}$ is dense in $\cH$ if and only if the linear span of the orbit $\{A^n\f : n\ge 0\}$ is dense (in fact, the latter property is often used as the definition of a cyclic vector in the bounded case). 

Since $\mu$ is compactly supported, polynomials are dense in $L^2(\mu)$. It is easy to see that the functions $A_\alpha^n \f $, $\f= \ID_t$, are polynomials of degree exactly $n$.  Hence the linear span of $\{A_\alpha^n \ID_t\}\ci{n\in\N}$ is the set of all polynomials, and thus  is  dense in $L^2(\mu)$. So $\ID_t$ is cyclic for $A_\alpha$.

The identity 
\[
M_s V_\alpha =V_\alpha A_\alpha=V_\alpha [M_t+\alpha(\fdot,\ID_t)\ci{L^2(\mu)} \ID_t] 
\]
implies
\[
V_\alpha M_t=M_sV_\alpha -\alpha(\fdot,\ID_t)\ci{L^2(\mu)}V_\alpha \ID_t=M_sV_\alpha -\alpha(\fdot,\ID_t)\ci{L^2(\mu)} \ID_s.
\]

Using induction one can show that the identity 
$$
V_\alpha M_t^{n}=M_s^{n}V_\alpha -\alpha\sum_{k=0}^{n-1}(\fdot,a_k)\ci{L^2(\mu)} b_k,   
$$
where $a_k\in L^2(\mu),  a_k (t) = t^k$,  $b_k\in L^2(\mu_\alpha),    b_k(s) =s^{n-k-1}$, 
or, more informally,
\begin{equation}
\label{V-ind}
V_\alpha M_t^{n}=M_s^{n}V_\alpha -\alpha\sum_{k=0}^{n-1}(\fdot,t^{k})\ci{L^2(\mu)} s^{n-k-1}
\end{equation}
holds true for all $n\in\N$. 
Indeed, assuming that the above identity holds for $n-1$ we get
\begin{align*}
&V_\alpha M_t^n
=
V_\alpha M_tM_t^{n-1}=M_sV_\alpha M_t^{n-1}-\alpha(\fdot,t^{n-1})\ci{L^2(\mu)} \ID_s\\
=&M_s\left[M_s^{n-1}V_\alpha -\alpha \sum_{k=0}^{n-2}(\fdot,t^{k})\ci{L^2(\mu)} s^{n-k-2}\right]-\alpha (\fdot,t^{n-1})\ci{L^2(\mu)} \ID_s\\
=&M_s^nV_\alpha -\alpha \sum_{k=0}^{n-1}(\fdot,t^{k})\ci{L^2(\mu)} s^{n-k-1}.
\end{align*}

Since
$$
(f, t^{k})\ci{L^2(\mu)} s^{n-k-1} = \int\ci\R f(t)  t^k s^{n-k-1} \, d\mu(t)
$$
we have 
$$
\sum_{k=0}^{n-1} (\ID_t,t^{k})\ci{L^2(\mu)} s^{n-k-1} = \int\ci\R \left(  \sum_{k=0}^{n-1} t^k s^{n-k-1} \right) \,d\mu(t) = \int\ci\R \frac{s^n-t^n}{s-t}\,\dd\mu(t).
$$
Note, that the integral is well-defined, because $\mu(\R)<\infty$ and function $t\mapsto(s^n-t^n)/(s-t)$ is bounded on the (bounded) support of $\mu$.

So applying \eqref{V-ind} to $\ID_t\in L^2(\mu)$ and using the above identity we get 
\[
(V_\alpha t^n)(s) = s^n
-\alpha \int\ci\R \frac{s^n-t^n}{s-t}\,\,\dd\mu(t)
\]
for all $n\in\N$. Since $V_\alpha\ID_t=\ID_s$, this representation formula holds also on constant functions.

Due to the linearity of $V_\alpha $, this extends to a representation formula
\begin{align*}
(V_\alpha\, p )(s) =
 p(s)
-\alpha\int\ci\R \frac{p(s)-p(t)}{s-t}\dd\mu(t)
\end{align*}
on polynomials $p(t)$.

To extend this formula to $C_0^1(\R)$ we will use the lemma below. While in this case  a bit simpler direct reasoning is possible, the lemma below will be useful later, when we need to extend formula \eqref{repr-V} to different classes of functions. 

\begin{lem}
\label{lim-repr-V}
Let $\mu$ and $\nu$ be measures on $\R$ satisfying $\int(1+|x|)^{-1}\,d\mu(x)<\infty$, $\int(1+|x|)^{-2}\,d\nu(x)<\infty$.  Let $V:L^2(\mu) \to L^2(\nu)$ be a bounded operator such that for functions $f$ in some subset $\mathcal L\subset L^2(\mu)\cap L^2(\nu) \cap C^1(\R)$ we have
\begin{equation}
\label{repr-V-1}
Vf(s) = f(s) -\alpha \int\frac{f(s)-f(t)}{s-t}\,d\mu(t)\qquad \nu\text{-a.e.,}	
\end{equation}
where the integral is well-defined (integrand belongs to $L^1(\mu)$). 

Let $f_n\in \mathcal L$ be such that 
\begin{enumerate}
	\item\label{lrV-1} $f_n \to f$ $\mu$-a.e.~and $\nu$-a.e.;
	\item $|f_n(x)|\le C/(1+|x|)$ ($C$ does not depend on $n$);
	\item $|f_n'(x)|\le C$ ($C$ does not depend on $n$). 
\end{enumerate}

Then $f\in L^2(\mu)$ and $Vf$ is given by the above formula \eqref{repr-V-1} (note that we neither assumed nor concluded that $f\in\mathcal L$). 
\end{lem}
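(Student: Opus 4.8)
The plan is to pass to the limit in the representation formula \eqref{repr-V-1} applied to each $f_n\in\mathcal L$, controlling each piece by dominated convergence. First I would establish that $f\in L^2(\mu)$: by hypothesis \eqref{lrV-1} we have $f_n\to f$ $\mu$-a.e., and the uniform bound $|f_n(x)|\le C/(1+|x|)$ passes to the limit, giving $|f(x)|\le C/(1+|x|)$; since $\int(1+|x|)^{-2}\,d\mu(x)\le\int(1+|x|)^{-1}\,d\mu(x)<\infty$, it follows that $f\in L^2(\mu)$ (and likewise $f\in L^2(\nu)$, using $\int(1+|x|)^{-2}\,d\nu<\infty$). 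By dominated convergence with the same $L^2(\mu)$ majorant $C/(1+|x|)$, we also get $f_n\to f$ in $L^2(\mu)$.

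Next I would analyze the two sides of \eqref{repr-V-1} separately. For the left-hand side, the $L^2(\mu)$ convergence $f_n\to f$ together with the boundedness of $V:L^2(\mu)\to L^2(\nu)$ yields $Vf_n\to Vf$ in $L^2(\nu)$, hence (along a subsequence) $\nu$-a.e. On the right-hand side, the free term $f_n(s)\to f(s)$ pointwise by \eqref{lrV-1}. The real work is the integral term
\[
\int\frac{f_n(s)-f_n(t)}{s-t}\,d\mu(t),
\]
which I would split at a fixed scale, say $|s-t|<1$ versus $|s-t|\ge 1$, for each fixed $s$ where the relevant pointwise convergences hold. On the near region the mean value theorem and hypothesis (3) give $|(f_n(s)-f_n(t))/(s-t)|\le C$, an integrable bound against the finite measure $\mu$ restricted to a bounded neighborhood, and the integrand converges pointwise $\mu$-a.e. to $(f(s)-f(t))/(s-t)$. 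On the far region $|s-t|\ge 1$ the denominator is bounded below, so $|(f_n(s)-f_n(t))/(s-t)|\le |f_n(s)|+|f_n(t)|\le C/(1+|s|)+C/(1+|t|)$, and the $t$-dependent part $C/(1+|t|)$ is $\mu$-integrable by assumption; again the integrand converges pointwise. Dominated convergence on each region then gives convergence of the whole integral to $\int (f(s)-f(t))/(s-t)\,d\mu(t)$ for $\mu$-a.e. (and, with the symmetric estimate in $s$, for $\nu$-a.e.) fixed $s$, and in particular shows this limiting integrand is in $L^1(\mu)$ so the formula is well-defined for $f$.

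Combining the three limits, the identity $Vf_n(s)=f_n(s)-\alpha\int(f_n(s)-f_n(t))/(s-t)\,d\mu(t)$ passes to the limit $\nu$-a.e., yielding \eqref{repr-V-1} for $f$. The main obstacle I anticipate is the integral term: the singularity at $s=t$ must be handled by the derivative bound (3) rather than by the pointwise bound (2), and one must be careful that the dominating functions used for dominated convergence are genuinely integrable against $\mu$ and independent of $n$ on each region, and that the splitting is uniform enough that the exceptional $\nu$-null set of bad $s$ does not interfere. The passage from $L^2$-convergence of $Vf_n$ to a $\nu$-a.e. statement is only along a subsequence, but since the right-hand side converges $\nu$-a.e. for the full sequence, the two agree $\nu$-a.e.
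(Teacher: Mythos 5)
Your argument is the paper's argument in all essentials: $f\in L^2(\mu)$ and $f_n\to f$ in $L^2(\mu)$ from (1), (2) and dominated convergence, hence $Vf_n\to Vf$ in $L^2(\nu)$ and $\nu$-a.e.\ along a subsequence, combined with pointwise $\nu$-a.e.\ convergence of the right-hand side for the full sequence; the two limits then agree. The one step that does not survive scrutiny is your domination on the far region $|s-t|\ge 1$. There you bound the integrand by $|f_n(s)|+|f_n(t)|\le C/(1+|s|)+C/(1+|t|)$ and observe that the $t$-dependent part is $\mu$-integrable; but the term $C/(1+|s|)$ is a nonzero constant in $t$, and a nonzero constant is $\mu$-integrable over $\{t:|s-t|\ge1\}$ only if $\mu$ is a finite measure. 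The lemma assumes only $\int(1+|t|)^{-1}\,d\mu(t)<\infty$, which permits $\mu(\R)=\infty$ (e.g.\ $d\mu=(1+|t|)^{-1/2}\,dt$), and this is precisely the case the lemma is needed for (form bounded perturbations of unbounded $A$). So, as written, your dominating function is not integrable and dominated convergence does not apply on that region.

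The repair is immediate and is what the paper does: do not discard the denominator. For $|s-t|\ge1$ one has $|f_n(s)-f_n(t)|/|s-t|\le 2C/|s-t|$, and since $1+|t|\le(1+|s|)(1+|s-t|)\le 2(1+|s|)\,|s-t|$ on that region, this is at most $C(s)/(1+|t|)$. Together with the mean value theorem bound $\le C$ on $|s-t|\le1$ (where, as you note, $\mu$ restricted to a bounded set is finite, so the constant bound can also be absorbed into $C(s)/(1+|t|)$), one obtains a single $\mu$-integrable majorant valid for all $t$, and the rest of your argument goes through unchanged.
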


\begin{proof}
Assumptions (1) and (2) together with the assumptions about the measures and the Dominated Convergence Theorem imply that $f_n\to f$ in $L^2(\mu) $ and $L^2(\nu)$. The boundedness of $V$ implies that $Vf_n\to Vf$ in $L^2(\nu)$. By taking a subsequence, if necessary, we can always assume that $f_n\to f$, $Vf_n\to Vf$ with respect to $\nu$-a.e.

On the other hand by the Dominated Convergence Theorem for any fixed $s\in \R$ we have
$$
\lim_{n \to \infty} \int\frac{f_n(s)-f_n(t)}{s-t}\,d\mu(t) = \int\frac{f(s)-f(t)}{s-t}\,d\mu(t). 
$$
Indeed, we know that  $|f_n|\le C$, $|f_n'|\le C$. So for $|s-t|\le1$ it holds
$$
\frac{|f_n(s)-f_n(t)|}{|s-t|} \le C
$$
by the Mean Value Theorem. And for $|s-t|>1$ we have
\[
\frac{|f_n(s)-f_n(t)|}{|s-t|} \le \frac{2C}{|s-t|} . 
\]
Combining these two estimates, we get
\[
 \left| \frac{f_n(s)-f_n(t)}{s-t}\right|
 \le \frac{C(s)}{1+|t|}\,.
\]
Because $\int(1+|t|)^{-1}\,d\mu(t)<\infty$, we can apply the Dominated Convergence Theorem.
\end{proof}

To prove Theorem \ref{t-repr-V} in the general case, let us first remind the reader of a few well-known facts about form bounded perturbations. 

\subsection{Form bounded perturbations and resolvent formula}\label{SMAIN} 

For an unbounded self-adjoint operator $A$ in a Hilbert space $\cH$, one can define the standard scale of spaces 
\[
\hdots\subset\mathcal H_{2}(A)\subset\mathcal H_{1}(A)\subset\mathcal
H_{0}(A)= \cH \subset\mathcal H_{-1}(A)\subset\mathcal
H_{-2}(A)\subset\hdots,
\]
where $\mathcal H_r(A):=\left\{\psi\in
\cH :\left\|(1+|A|)^{r/2}\psi\right\|\ci{\cH}<\infty\right\}$ for
$r\geq0$. Here $|A|$ is the \emph{modulus} of the operator $A$, i.e.~$|A| = (A^*A)^{1/2}$. 

 If $r<0$, it is defined by $\mathcal H_r(A):=\left[\mathcal H_{-r}(A)\right]^*$ with the duality inherited from the inner product in $\cH$. Or, speaking more carefully, one can say that the space $\cH_{-r}$, $r>0$ is defined by introducing the norm 
$$
\| f\|\ci{\cH_{-r}} = \| (I +|A|)^{-r/2}f\|\ci{\cH}
$$
on $\cH$ and taking the completion of $\cH$ in this norm.

In the case when $A$ is the multiplication operator $M_t$ by the independent variable $t$ in $L^2(\mu)$, we simply have 
$$
\cH_{r} =L^2((1+|t|)^rd\mu) =  \left\{ f\,:\, \int |f(t)|^2 (1+|t|)^r \,d\mu(t) <\infty\right\}. 
$$
Note, if $A$ is a bounded operator, then $\cH_r=\cH$ for all $r$. 

It is well-known that it is possible to define the rank one perturbation $A_\alpha= A +\alpha(\fdot, \f)\f$ of the operator $A$ for unbounded perturbations $(\fdot, \f)\f$, i.e.~when $\f\notin \cH$, but $\f$ belongs to some $\cH_k$. Such perturbations are called \emph{singular}, and the case $\f\in \cH_{-1}\setminus \cH$ is probably the simplest case of a singular perturbation. 

Perturbations with $\f\in \cH_{-1}\setminus \cH$ are called \emph{form bounded}, the term form bounded used because the quadratic form of the perturbation $(\fdot, \f)\f$  is bounded by the quadratic form of the operator $I+|A|$. 

When $\f\notin \cH$, but $\f$ belongs to some $\cH_k$, we can define the quadratic form of the perturbed operator $A_\alpha= A +\alpha(\fdot, \f)\f$ on some dense subset of $\cH$. The question is whether or not this form gives rise to  a unique self-adjoint extension.

It is well-known that the answer is affirmative for form bounded perturbations. 


Without going into details about how the form bounded perturbation is defined, let us mention the main facts we will be using. The first one is the following \emph{resolvent formula}
\begin{eqnarray}\label{singres}
&&(A_\alpha-\lambda\OID)^{-1}f
=(A-\lambda\OID)^{-1}f-\frac{\alpha\left((A-\lambda\OID)^{-1}f,\f\right)}{1+\alpha\left((A-\lambda\OID)^{-1}\f,\f\right)}(A-\lambda\OID)^{-1}\f
\end{eqnarray}
which initially holds for $f\in \cH$, $\lambda\in \C\setminus\R$ (see, e.g.~equation (17) of \cite{AK1} or  Proposition 2.1 and Theorem 3.3 of \cite{KL}).

Note, the inner product $((A-\lambda \OID)^{-1} \f, \f)$ is well-defined for $\f\in \cH_{-1}(A)$ and $(A-\lambda \OID)^{-1} $ is an isomorphism between  $\cH_{r-2}(A)$ and $\cH_r(A)$. 
Probably the easiest way to see that is to invoke the Spectral Theorem. 


The following three well-known lemmata are corollaries of the resolvent formula \ref{singres}. 

\begin{lem}\label{l-halpha}
The resolvent formula \eqref{singres} can be extended to $f\in \cH_{-1}(A)$.

Moreover, for any $\lambda\in \C\setminus\R$ the operator $(A_\alpha-\lambda \OID)^{-1}$ is an isomorphism between  $\cH_{-1}(A)$ and $\cH_1(A)$.
\end{lem}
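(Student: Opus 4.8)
The plan is to read the right-hand side of the resolvent formula \eqref{singres} as an operator acting in the scale of spaces $\cH_r(A)$, and to derive both assertions from the single observation that this right-hand side factors through the unperturbed resolvent. Throughout write $R_\la := (A-\la\OID)^{-1}$.

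\emph{Extension.} First I would check that every term on the right of \eqref{singres} is already meaningful for $f\in\cH_{-1}(A)$. Since $R_\la$ is an isomorphism $\cH_{-1}(A)\to\cH_1(A)$ (the case $r=1$ of the stated mapping property), both $R_\la f$ and $R_\la\f$ lie in $\cH_1(A)\subset\cH$; the scalars $(R_\la f,\f)$ and $(R_\la\f,\f)$ are then legitimate $\cH_1$--$\cH_{-1}$ duality pairings. Hence the right-hand side defines a vector of $\cH_1(A)$ depending continuously on $f$ in the $\cH_{-1}$ norm. As $\cH$ is dense in $\cH_{-1}(A)$ and \eqref{singres} is known on $\cH$, the identity passes to the limit and holds on all of $\cH_{-1}(A)$; this simultaneously furnishes the definition of $(A_\alpha-\la\OID)^{-1}f$ for singular $f$.

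\emph{Isomorphism.} Setting $g:=R_\la f$, the right-hand side of \eqref{singres} becomes $Sg$, where $S$ is the rank-one modification of the identity on $\cH_1(A)$ given by $Sg := g-\beta(g,\f)R_\la\f$, with $\beta:=\alpha/(1+\alpha(R_\la\f,\f))$. Thus $(A_\alpha-\la\OID)^{-1}=SR_\la$ on $\cH_{-1}(A)$. The operator $S$ is bounded on $\cH_1(A)$ because $(\fdot,\f)$ is a bounded functional there and $R_\la\f\in\cH_1(A)$. By the rank-one (Sherman--Morrison) inversion formula, $S$ is invertible precisely when $1-\beta(R_\la\f,\f)\neq0$, and a short computation shows $1-\beta(R_\la\f,\f)=(1+\alpha(R_\la\f,\f))^{-1}$. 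Since $(A_\alpha-\la\OID)^{-1}=SR_\la$ is then a composition of the isomorphism $R_\la:\cH_{-1}(A)\to\cH_1(A)$ with the isomorphism $S:\cH_1(A)\to\cH_1(A)$, it is itself an isomorphism $\cH_{-1}(A)\to\cH_1(A)$.

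The one genuinely non-formal point — and the only place where $\la\in\C\setminus\R$ enters — is the non-vanishing of the denominator $1+\alpha(R_\la\f,\f)$, equivalently the invertibility of $S$. I expect this to be the main (though mild) obstacle, and would settle it with the Spectral Theorem: $(R_\la\f,\f)=\int_\R(t-\la)^{-1}\,d\mu(t)$ has imaginary part $(\im\la)\int_\R|t-\la|^{-2}\,d\mu(t)\neq0$ whenever $\im\la\neq0$, so, $\alpha$ being real, $1+\alpha(R_\la\f,\f)$ has nonzero imaginary part. The remainder is bookkeeping in the scale of spaces — confirming that each pairing is the correct $\cH_1$--$\cH_{-1}$ duality and that $S$ is uniformly bounded — where no real difficulty arises.
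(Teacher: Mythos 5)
Your proposal is correct and follows essentially the same route as the paper: both read the right-hand side of \eqref{singres} as a bounded map $\cH_{-1}(A)\to\cH_1(A)$ and extend by density from $\cH$, the only cosmetic difference being that the paper identifies the limit with $(A_\alpha-\la\OID)^{-1}f$ by noting $A_\alpha-\la\OID\in B(\cH_1(A),\cH_{-1}(A))$ and applying it to the limit. Your Sherman--Morrison factorization $(A_\alpha-\la\OID)^{-1}=SR_\la$ with $1-\beta(R_\la\f,\f)=(1+\alpha(R_\la\f,\f))^{-1}\ne0$ is a clean way of making explicit the second assertion, which the paper dismisses as following ``trivially'' from the first.
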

\begin{proof}
Take $f\in \cH_{-1}(A)$. We have $(A -\lambda \OID)^{-1} f \in \cH_1(A)$. So the right hand side of \eqref{singres} defines a bounded operator from $\cH_{-1}(A)$ to $\cH_1(A)$ (for $\lambda\in\C\setminus\R$). 

To complete the proof, take a sequence of vectors $f_n\in \cH$, $n\in \N$ such that $f_n\to f$ in the norm of $\cH_{-1}(A)$. The boundedness of the right side of \eqref{singres} implies that the sequence $g_n = (A_\alpha-\lambda \OID)^{-1} f_n$ converges in $\cH_1(A)$. Let $g$ be its limit. 

The boundedness of the right hand side of the identity \eqref{singres} implies the estimate \linebreak$\|g\|\ci{\cH_{1}(A)}\le C \|f\|\ci{\cH_{-1}(A)}$.  
Since $A_\alpha -\lambda \OID \in B(\cH_1(A), \cH_{-1}(A))$, we can conclude that $(A_\alpha -\lambda \OID) g = f$. 

The second statement follows trivially from the first one. 
\end{proof}

Let us recall that a vector $\f$ is called \emph{cyclic} for a self-adjoint operator $A$, if the span of the vectors $(A-\lambda \OID)^{-1} \f$, $\lambda \in \C\setminus\R$ is dense in $\cH$. Note that for this definition, one does not need to assume that $\f\in\cH$, but only that $(A-\lambda \OID)^{-1} \f\in\cH$, i.e.~that $\f\in \cH_{-2}(A)$. 
If $\f\in \cH_{-1}(A)$, then for $A_\alpha =A+ \alpha(\fdot, \f)\f$ we trivially have $(A_\alpha-\lambda \OID)^{-1} \f\in \cH$. So $\f\in \cH_{-2}(A_\alpha)$. 
\begin{lem}\label{cyc}
Let $\f\in \cH_{-1}(A)$ be a cyclic vector for $A$ and let $A_\alpha= A+ \alpha(\fdot, \f)\f$. Then 
$\f$ is cyclic for $A_\alpha$.
\end{lem}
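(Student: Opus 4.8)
The plan is to exploit the resolvent formula \eqref{singres} to express the resolvents of $A_\alpha$ in terms of those of $A$, and conversely, so that cyclicity transfers between the two operators. The key observation is that formula \eqref{singres}, valid for $f\in\cH_{-1}(A)$ by Lemma \ref{l-halpha}, shows that each vector $(A_\alpha-\lambda\OID)^{-1}\f$ is a linear combination of $(A-\lambda\OID)^{-1}\f$ and itself scaled. More precisely, setting $f=\f$ in \eqref{singres} gives
\[
(A_\alpha-\lambda\OID)^{-1}\f
=\frac{1}{1+\alpha\left((A-\lambda\OID)^{-1}\f,\f\right)}\,(A-\lambda\OID)^{-1}\f,
\]
after factoring out the common resolvent on the right-hand side. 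Thus $(A_\alpha-\lambda\OID)^{-1}\f$ is simply a nonzero scalar multiple of $(A-\lambda\OID)^{-1}\f$, provided the scalar denominator does not vanish.

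The main steps I would carry out are as follows. First, I would verify that the scalar $c(\lambda):=1+\alpha\left((A-\lambda\OID)^{-1}\f,\f\right)$ is nonzero for $\lambda\in\C\setminus\R$; this is guaranteed because $A_\alpha$ is self-adjoint (so its resolvent exists on $\C\setminus\R$) and the displayed identity shows $c(\lambda)$ is exactly the reciprocal of the factor relating the two resolvents applied to $\f$. Second, with $c(\lambda)\neq0$ established, the identity above shows that the linear span of $\{(A_\alpha-\lambda\OID)^{-1}\f:\lambda\in\C\setminus\R\}$ coincides pointwise (up to nonzero scalars) with the linear span of $\{(A-\lambda\OID)^{-1}\f:\lambda\in\C\setminus\R\}$. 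Since $\f$ is assumed cyclic for $A$, the latter span is dense in $\cH$, and therefore so is the former. This is precisely the statement that $\f$ is cyclic for $A_\alpha$.

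I expect the only genuine subtlety to be the nonvanishing of $c(\lambda)$ and the careful bookkeeping of which space the vectors live in. The vector $\f$ itself lies in $\cH_{-1}(A)\setminus\cH$ in the singular case, so one cannot treat $\f$ as an element of $\cH$; however, by Lemma \ref{l-halpha} the resolvents $(A_\alpha-\lambda\OID)^{-1}\f$ and $(A-\lambda\OID)^{-1}\f$ do land in $\cH$ (indeed in $\cH_1$), so the cyclicity statement, which concerns density of these resolvent images in $\cH$, is meaningful. The nonvanishing of $c(\lambda)$ away from the real axis follows from the fact that the perturbed operator $A_\alpha$ is self-adjoint and hence its resolvent is everywhere defined for $\lambda\in\C\setminus\R$: if $c(\lambda_0)=0$ for some such $\lambda_0$, the resolvent formula would force the left-hand side to be undefined, a contradiction. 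Once this is noted, the argument is essentially immediate, and the whole proof reduces to the one-line resolvent identity for $\f$ together with the density hypothesis on $A$.
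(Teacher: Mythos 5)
Your proof is correct and follows essentially the same route as the paper: both apply the resolvent formula \eqref{singres} with $f=\f$ to obtain $(A_\alpha-\lambda\OID)^{-1}\f = c(\lambda)(A-\lambda\OID)^{-1}\f$ with $c(\lambda)=\left[1+\alpha\left((A-\lambda\OID)^{-1}\f,\f\right)\right]^{-1}\ne 0$, so the two resolvent orbits span the same subspace and cyclicity transfers. The only cosmetic difference is how the nonvanishing of the scalar is justified (the paper observes that $\tfrac{x}{1+x}\ne 1$, you invoke self-adjointness of $A_\alpha$), which does not change the argument.
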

\begin{proof}
Recall Lemma \ref{l-halpha}. So since
$$
\frac{\alpha\left((A-\lambda\OID)^{-1}\f,\f\right)}{1+\alpha\left((A-\lambda\OID)^{-1}\f,\f\right)} \ne 1 \qquad \forall \lambda \in \C\setminus \R, 
$$
the resolvent formula \eqref{singres} implies that $(A_\alpha -\lambda \OID)^{-1} \f = c(\lambda) (A-\lambda \OID)^{-1}\f$, $c(\lambda) \ne 0$ for all $\lambda \in \C\setminus \R$.
\end{proof}

\begin{lem}\label{l-falpha}
If $\f\in \cH_{-1}(A)$, then   $\f\in\cH_{-1}(A_\alpha)$ for all $\alpha\in \R$. In particular, we have $\int_\R \frac{\dd\mu_\alpha (s)}{1+|s|}<\infty$.
\end{lem}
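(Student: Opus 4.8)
The plan is to reduce the assertion $\f\in\cH_{-1}(A_\alpha)$ to the finiteness of $\int_\R(1+|s|)^{-1}\,d\mu_\alpha(s)$ (which is exactly what membership in $\cH_{-1}(A_\alpha)$ means once we read everything in the spectral representation $L^2(\mu_\alpha)$), and then to transport the already-assumed finiteness of $\int_\R(1+|t|)^{-1}\,d\mu(t)$ across the resolvent formula \eqref{singres}. Since $\mu_\alpha=\mu$ when $\alpha=0$, I assume $\alpha\ne0$.

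First I would record an elementary weight identity. For any positive measure $\rho$ on $\R$, Tonelli's theorem gives
\[
\int_1^\infty\left(\int_\R\frac{d\rho(x)}{x^2+y^2}\right)dy = \int_\R \Phi(x)\,d\rho(x),\qquad \Phi(x):=\int_1^\infty\frac{dy}{x^2+y^2},
\]
and a direct computation shows $\Phi(x)\asymp(1+|x|)^{-1}$ (indeed $\Phi\le1$, it is bounded below for $|x|\le1$, and $\Phi(x)\asymp|x|^{-1}$ for $|x|\ge1$). Hence $\int_\R(1+|x|)^{-1}d\rho\asymp\int_1^\infty\big(\int_\R(x^2+y^2)^{-1}d\rho\big)\,dy$, so the whole task reduces to controlling $\int_\R(s^2+y^2)^{-1}d\mu_\alpha(s)$ by its $\mu$-counterpart, integrated in $y$ over $[1,\infty)$.

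Next, for $\la=iy$ with $y>0$, Lemma \ref{cyc} (obtained from \eqref{singres} applied to $\f$) yields $(A_\alpha-iy\OID)^{-1}\f = c(iy)(A-iy\OID)^{-1}\f$ with $c(iy)=\big(1+\alpha F(iy)\big)^{-1}$ and $F(iy)=\big((A-iy\OID)^{-1}\f,\f\big)$. Taking $\cH$-norms, reading the left-hand side in $L^2(\mu_\alpha)$ (this is legitimate because $\f\in\cH_{-2}(A_\alpha)$, so $(s-iy)^{-1}\in L^2(\mu_\alpha)$ and genuinely represents $(A_\alpha-iy\OID)^{-1}\f$) and the right-hand side in $L^2(\mu)$, I obtain
\[
\int_\R\frac{d\mu_\alpha(s)}{s^2+y^2} = \frac{1}{|1+\alpha F(iy)|^2}\int_\R\frac{d\mu(t)}{t^2+y^2}.
\]
I would then bound the scalar factor uniformly on $[1,\infty)$: since $\im(1+\alpha F(iy)) = \alpha\,\im F(iy) = \alpha y\int_\R(t^2+y^2)^{-1}d\mu\ne0$, the denominator never vanishes; it is continuous in $y$; and $F(iy)\to0$ as $y\to\infty$ by dominated convergence, using $(t^2+y^2)^{-1/2}\le(1+t^2)^{-1/2}\asymp(1+|t|)^{-1}\in L^1(\mu)$. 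Consequently $|1+\alpha F(iy)|^{-2}\le C$ for all $y\ge1$.

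Finally I would integrate the displayed identity over $y\in[1,\infty)$, apply the weight identity of the first step at both ends, and invoke the hypothesis $\int_\R(1+|t|)^{-1}d\mu<\infty$:
\[
\int_\R\frac{d\mu_\alpha(s)}{1+|s|}\asymp\int_1^\infty\!\!\int_\R\frac{d\mu_\alpha(s)}{s^2+y^2}\,dy\le C\int_1^\infty\!\!\int_\R\frac{d\mu(t)}{t^2+y^2}\,dy\asymp C\int_\R\frac{d\mu(t)}{1+|t|}<\infty .
\]
The delicate points I anticipate are exactly two: interpreting the norm identity without circularity, which is secured by the already-established fact $\f\in\cH_{-2}(A_\alpha)$ (guaranteeing $(s-iy)^{-1}\in L^2(\mu_\alpha)$), and the uniform control of $|1+\alpha F(iy)|^{-1}$ for small $y$, which follows from the strict positivity of $\im F(iy)$ together with continuity and the decay $F(iy)\to0$. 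Everything else is bookkeeping.
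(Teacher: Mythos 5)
Your argument is correct and is essentially the paper's own proof: the weight identity $\Phi(x)\asymp(1+|x|)^{-1}$ is the paper's estimate \eqref{falpha}, and your norm identity $\int(s^2+y^2)^{-1}d\mu_\alpha=|1+\alpha F(iy)|^{-2}\int(t^2+y^2)^{-1}d\mu$ is precisely the relation $\im F_\alpha(iy)=\im F(iy)/|1+\alpha F(iy)|^2$ that the paper extracts from the Aronszajn--Krein formula, combined with the same decay $F(iy)\to0$. The only cosmetic differences are that you derive the key identity by taking $\cH$-norms in the resolvent formula rather than imaginary parts of $F_\alpha=F/(1+\alpha F)$, and that you bound $|1+\alpha F(iy)|^{-2}$ on all of $[1,\infty)$ by continuity and non-vanishing where the paper simply restricts to large $y$.
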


\begin{rem*} If the operator $A$ is \emph{semibounded}, i.e.~if $A \ge a \OID$ for some $a\in \R$, then the proof of the lemma is almost trivial. Indeed, if $A$ is semibounded, then $A_\alpha$ is semibounded, and for semibounded operators $f\in \cH_{-1}(A)$ if and only if $( (A-\la \OID)^{-1} f,f)$ is  defined and bounded for some (or equivalently, for all) $\la\in \C\setminus \R$. 
\end{rem*}

We learned the proof  below, which works for the general case, from  Pavel Kurasov.

\begin{proof}[Proof of Lemma \ref{l-falpha}]
Recall that we assume $\f\in\cH_{-1}(A)$. Define
\[
F_\alpha(z):=((A_\alpha- z \OID)^{-1}\f,\f)=\int_\R\frac{1}{x-z}\,\dd\mu_\alpha(x)
\]for all $z\in \C\setminus \R, \alpha\in\R$. It is not hard to see that
\begin{eqnarray}\label{falpha}
\quad\quad\forall K>0 \,\,\,\exists C(K)>0\,\,\,:\,\,\frac{C(K)^{-1}}{1+|x|}\le\int_K^\infty \text{Im}\frac{1}{x-iy}\,\frac{\dd y}{y}\le \frac{C(K)}{1+|x|}\,.
\end{eqnarray}

Further we have the statement
\begin{eqnarray}\label{ImF}
\exists C\quad\forall |y|\ge C\quad:\quad|\im F(iy)|\sim|\im F_\alpha(iy)|\,,
\end{eqnarray}
where $F:=F_0$.
For the proof of statement \eqref{ImF}, first notice that for $|y|\ge 1$ it holds $\left|\frac{1}{iy-x}\right|\le\left|\frac{1}{i-x}\right|$ by a geometric argument. Since $\f\in\cH_{-1}(A)$, $\int\frac{\dd\mu(x)}{|i-x|}<\infty$. By the Dominated Convergence Theorem, we obtain
\[
\lim_{y\to\infty} |F(iy)|
\le
\lim_{y\to\infty} \int\left|\frac{1}{iy-x} \right|\dd\mu(x)
=
 \int\lim_{y\to\infty}\left|\frac{1}{iy-x} \right|\dd\mu(x)
=0.
\]
Recall the Aronszajn--Krein formula $F_\alpha = \frac{F}{1+\alpha F}$ which follows from the resolvent formula \eqref{singres}, see e.g.~equation (15) of \cite{AK1}. To see statement \eqref{ImF} note that
\[
\im F_\alpha = \im \frac{F}{1+\alpha F}
=\frac {\im F}{|1+\alpha F|^2}.
\]

Let us complete the proof that $\f\in\cH_{-1}(A_\alpha)$. The inclusion $\f\in\cH_{-1}(A)$ means that $\int_\R \frac{\dd\mu(x)}{1+|x|}<\infty$. By the right inequality of \eqref{falpha} for $\alpha=0$ it follows that $\int_K^\infty \frac{\text{Im} F(iy)}{y}\,\dd y<\infty$ for all $K>0$. For the interchange of order of integration, note that the latter integrand is positive for all $y$. According to \eqref{ImF} it follows that $\int_K^\infty \frac{\text{Im} F_\alpha(iy)}{y}\,\dd y<\infty$ for all $K>C$. By the left inequality of \eqref{falpha}, we obtain $\int_\R \frac{\dd\mu_\alpha(x)}{1+|x|}<\infty$, that is $\f\in\cH_{-1}(A_\alpha)$.
\end{proof}

\subsection{Proof of Theorem \ref{t-repr-V} for unbounded $A$}\label{iddd}
Recall that $A=M_t$ in $L^2(\mu)$, $A_\alpha= A +\alpha (\fdot, \f)\f$, $\f\equiv\ID_t$ and that $V_\alpha  A_\alpha=M_s V_\alpha  $, where $M_s$ is the multiplication by the independent variable $s$ in $L^2(\mu_\alpha )$. Recall also that $V_\alpha \ID_t=\ID_s$. 
Using the resolvent equality \eqref{singres} for $f=\f=\ID_t$ we get
\begin{align*}
&(M_s-\lambda\OID)^{-1}\ID_s
 =V_\alpha (A_\alpha-\lambda\OID)^{-1}V_\alpha ^{-1}\ID_s =V_\alpha (A_\alpha-\lambda\OID)^{-1}\ID_t
\\
=&
\left[1+\alpha \left((M_t-\lambda\OID)^{-1}\ID_t,\ID_t\right)\ci{L^2(\mu)}\right]^{-1}V_\alpha (M_t-\lambda\OID)^{-1}\ID_t
\end{align*}
for $\lambda\in \C\setminus\R$. So multiplying both sides by the term in square brackets and recalling that $(M_x-\lambda \OID)^{-1} \ID_x = (x-\lambda)^{-1}$, we have
\begin{eqnarray*}
&&V_\alpha\, \frac{1}{t-\lambda}=\left[1+\alpha\int\ci\R\frac{\dd \mu(t)}{t-\lambda}\right]\frac{1}{s-\lambda}\qquad\forall\lambda\in\C\setminus\R.
\end{eqnarray*}

Rewriting $\frac{1}{s-\lambda}\cdot\frac{1}{t-\lambda}=-\left(\frac{1}{s-\lambda}-\frac{1}{t-\lambda}\right)\frac{1}{s-t}$ we obtain
\begin{align*}
V_\alpha\, \frac{1}{t-\lambda}=\frac{1}{s-\lambda}-\alpha\int\ci\R\frac{\frac{1}{s-\lambda}-\frac{1}{t-\lambda}}{s-t}\,\dd \mu(t)
\end{align*}
for $\lambda\in\C\setminus\R$. By linearity we get that  formula \eqref{repr-V} holds for $f$ in the space
\[
B:=\spa\left\{\frac{1}{t-\lambda_k}\,:\,\lambda_k\in \C\setminus\R\right\}.
\]

Let us show that formula \eqref{repr-V} holds on $C_0^1(\R)$. 
Let $f\in C_0^1(\R)$, $\supp f\subset [-L,L]$, and let 
$P_\e$ be the Poisson kernel, $P_\varepsilon(x)= \frac{1}{\pi}\frac{\varepsilon}{x^2+\varepsilon^2}$.

Assume for a moment that the formula \eqref{repr-V} holds for the functions of form $P_\e*f$, $f\in C^1_0$. 
Convolution $P_\varepsilon* f$ converges to $f$ uniformly on $\R$. So $|P_\e*f(x)|\le C$ ($C$ does not depend on $\e$ and $x$) for all sufficiently small $\e$. Moreover, for  $|x|>2L$ we have $|(P_\e*f) (x)| \le C\e/x^2$, so  $|(P_\e*f) (x)| \le C/(1+|x|)$.

Since $(P_\e*f)'= P_\e * f'$, we conclude $(P_\e*f)' \to f'$ uniformly in $\R$, so $|(P_\e*f)'(x)|\le C$ for all sufficiently small $\e$. If $\e_n\searrow 0$, then the functions $f_n=P_{\e_n}* f$ satisfy the assumptions of Lemma \ref{lim-repr-V}, and \eqref{t-repr-V} holds for $f$.

To complete the proof of Theorem \ref{t-repr-V}, we need to show that formula \eqref{repr-V} holds for the functions of the form $P_\e*f$, $f\in C^1_0$.

Let us (for a fixed $\e>0$) approximate the convolution $g(x)=P_\e*f(x) =\int P_{\e}(x-t) f(t)\,dt$ by its Riemann sums. 

Since $P_\varepsilon(x-t)=\frac{1}{2\pi i}\left[\frac{1}{t - i\varepsilon-x}-\frac{1}{t+i\varepsilon-x}\right]$, the Riemann sums $g_n(x)$ can be chosen to be elements of $B$. So formula \eqref{repr-V} holds for $g_n$. Uniform continuity and boundedness of $f$ and $P_\e$ imply that $g_n\rightrightarrows g$. It is also easy to see that for $|x|>2L$ we can estimate $|g_n(x)|\le C/x^2$, thus $|g_n(x)|\le C/(1+|x|)$. 

Finally, taking the derivative we get the uniform estimate $|g_n'|\le C$. Notice, that $C=C(\e)$ here, we do not need uniform in $\e$ estimate.

Functions $g_n$ satisfy the assumptions of  Lemma \ref{lim-repr-V} and we can extend formula \eqref{repr-V} to functions of form $P_\e*f$, $f\in C^1_0$.   \hfill\qed

\subsection{Proof of the Rigidity Theorem \ref{rigTM}}\label{PROOFR}
Assume the hypotheses of the rigidity theorem, Theorem \ref{rigTM}, are satisfied.

Recall that $M_t$ and $M_s$ denote the multiplication operators by the independent variable in $L^2(\mu)$ and $L^2(\nu)$, respectively. 
 Note that if $M_s$ is unbounded, \emph{commuting} with $M_s$ means commuting with its spectral measures, or equivalently, with its resolvent. 

We utilize two lemmata.

\begin{lem}
\label{l-commute}
With the assumptions of Theorem \ref{rigTM} 
operator $VV^*$ commutes with $M_s$. In particular, we have $VV^*=M_{\psi}$ for some $\psi\in L^\infty(\nu)$.
\end{lem}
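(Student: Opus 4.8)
The plan is to show that $V$ intertwines the resolvents of $A_\alpha$ and $M_s$, pass to adjoints, and combine the two relations; the commutation of $VV^*$ with $M_s$ then falls out, and the structure of the commutant of $M_s$ produces the multiplier $\psi$.

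First I would extend the defining formula \eqref{repr-V} from $C_0^1(\R)$ to the resolvent kernels $t\mapsto(t-\lambda\OID)^{-1}$, $\lambda\in\C\setminus\R$. These lie in $L^2(\mu)$, since $\int(1+|t|)^{-1}d\mu(t)<\infty$ forces $\int|t-\lambda|^{-2}d\mu(t)<\infty$, and they are a.e.\ limits of truncated $C_0^1$ functions with values $\lesssim(1+|t|)^{-1}$ and uniformly bounded derivatives, so Lemma \ref{lim-repr-V} applies. Evaluating the integral via $\tfrac{1}{s-\lambda}-\tfrac{1}{t-\lambda}=\tfrac{-(s-t)}{(s-\lambda)(t-\lambda)}$ gives, exactly as in subsection \ref{iddd}, the key identity
$$
V\,\frac{1}{t-\lambda}=\Bigl(1+\alpha\int_\R\frac{d\mu(t)}{t-\lambda}\Bigr)\frac{1}{s-\lambda}=c(\lambda)\,\frac{1}{s-\lambda}.
$$
Because $F(\lambda):=\int(t-\lambda)^{-1}d\mu(t)$ is a non-constant Herglotz function (here $\mu$ is supported on at least two points), $c(\lambda)=1+\alpha F(\lambda)\ne0$ for $\lambda\in\C\setminus\R$, and the resolvent formula \eqref{singres} gives $(A_\alpha-\lambda\OID)^{-1}\ID=c(\lambda)^{-1}(t-\lambda)^{-1}$. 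Hence $V$ sends $(A_\alpha-\lambda\OID)^{-1}\ID\mapsto(s-\lambda)^{-1}=(M_s-\lambda\OID)^{-1}\ID$.

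Next I would upgrade this to the bounded resolvent intertwining
$$
V(A_\alpha-\mu\OID)^{-1}=(M_s-\mu\OID)^{-1}V,\qquad \mu\in\C\setminus\R.
$$
It suffices to check it on the span $B=\spa\{(t-\lambda\OID)^{-1}\}$, which is dense in $L^2(\mu)$ since the closed resolvent orbit of $\ID$ under $A=M_t$ is all of $L^2(\mu)$. On $B$ one expands $(A_\alpha-\mu\OID)^{-1}(A_\alpha-\lambda\OID)^{-1}\ID$ by the resolvent identity, applies the previous paragraph to each term, and recognizes the result as $(M_s-\mu\OID)^{-1}$ applied to $V(A_\alpha-\lambda\OID)^{-1}\ID$ (the resolvent identity for $M_s$ handles the bookkeeping). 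Both sides being bounded, they agree on $L^2(\mu)$. Taking adjoints and using self-adjointness of $A_\alpha$ and $M_s$ (so $((\,\cdot\,-\bar\mu\OID)^{-1})^*=(\,\cdot\,-\mu\OID)^{-1}$) yields the companion relation $(A_\alpha-\mu\OID)^{-1}V^*=V^*(M_s-\mu\OID)^{-1}$. Chaining the two gives
$$
(M_s-\mu\OID)^{-1}VV^*=V(A_\alpha-\mu\OID)^{-1}V^*=VV^*(M_s-\mu\OID)^{-1},
$$
so $VV^*$ commutes with every resolvent of $M_s$, which is precisely the meaning of commuting with $M_s$ recalled just before the lemma. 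Finally, since these resolvents generate the maximal abelian von Neumann algebra $\{M_\phi:\phi\in L^\infty(\nu)\}$ (as $\ID$ is cyclic for $M_s$ on $L^2(\nu)$), any bounded operator commuting with all of them is itself a multiplier; thus $VV^*=M_\psi$ for some $\psi\in L^\infty(\nu)$, and positivity of $VV^*$ forces $\psi\ge0$.

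The step I expect to be the main obstacle is the first one: rigorously promoting formula \eqref{repr-V}, which is only assumed on compactly supported $C^1$ functions, to the non-compactly-supported kernels $(t-\lambda\OID)^{-1}$ and hence to a genuine operator identity. Everything downstream is algebra together with the standard maximality of $L^\infty(\nu)$; the delicate points are verifying the hypotheses of Lemma \ref{lim-repr-V} and the integrability of the kernels, and confirming $c(\lambda)\ne0$ so that the vectors $(A_\alpha-\lambda\OID)^{-1}\ID$ and $(t-\lambda\OID)^{-1}$ span the same dense subspace $B$.
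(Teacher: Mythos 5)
Your argument follows the paper's proof of the general case essentially step for step: extend \eqref{repr-V} to the kernels $(t-\lambda)^{-1}$ via Lemma \ref{lim-repr-V}, obtain $V(t-\lambda)^{-1}=(1+\alpha F(\lambda))(s-\lambda)^{-1}$, convert this via the resolvent formula \eqref{singres} and the resolvent identity into the intertwining $V(A_\alpha-\lambda\OID)^{-1}=(M_s-\lambda\OID)^{-1}V$, take adjoints, and chain the two relations. The only ingredient you flag but do not actually supply is the hypothesis $\int(1+s^2)^{-1}\,d\nu(s)<\infty$ of Lemma \ref{lim-repr-V}, which the paper obtains by applying $V$ to a fixed nonnegative test function $f\in C^1_0$ and observing that $|(Vf)(s)|\ge C/|s|$ for large $|s|$.
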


\begin{proof}
Let us first present an easier proof for the  case of bounded and compactly supported measures $\mu$ and $\nu$.

Let us begin by showing that 
\begin{eqnarray}\label{MS}
M_sV&=&V[M_t+\alpha(\fdot,\ID_t)\ci{L^2(\mu)} \ID_t].
\end{eqnarray}

Notice, that we can extend formula \eqref{repr-V} from $C^1_0$ to polynomials by multiplying the polynomials by an appropriate cut-off function $h\in C^1_0$, $h\equiv 1$ on $\supp \mu\cup\supp\nu$.

Let us prove \eqref{MS} for monomials $t^n$. 
For $f\equiv\ID_t$, formula \eqref{repr-V} yields $V\ID_t=\ID_s$.  Then application of \eqref{repr-V} to $ t^n$ and $t^{n+1}$ we get for $n\ge1$
\begin{align*}
&(M_sVt^n)(s)-(VM_tt^{n})(s)= s (Vt^n)(s)-(Vt^{n+1})(s)\\
=&-\alpha\int \left[\frac{s(s^n-t^n)}{s-t} - \frac{s^{n+1}-t^{n+1}}{s-t}\right]\, d\mu(t)\\
=&\alpha\int t^n\,\dd\mu(t)=\alpha (t^n,\ID_t)\ci{L^2(\mu)}\ID_s.
\end{align*}
So \eqref{MS} holds for  monomials $t^n$.

By linearity and continuity \eqref{MS} holds on polynomials. By assumption the polynomials are dense in $L^2(\mu)$, operator $V:L^2(\mu)\to L^2(\nu)$ is bounded and measures $\nu$, $\mu$ are bounded and of compact suppport. Therefore \eqref{MS} holds as an operator from $L^2(\mu)$ to $L^2(\nu)$.

Denoting $A_\alpha= M_t +\alpha(\fdot, \ID_t)\ID_t$ we rewrite \eqref{MS} as 
$M_sV=VA_\alpha$, and take the adjoint we get $V^*M_s=A_\alpha V^*$. So $VV^*$ commutes with $M_s$:
$$
M_sVV^*=VA_\alpha V^* = VV^*M_s.
$$

To prove the theorem in the general case we need an analogue of \eqref{MS} with resolvents instead of the operators $M_s$ and $A_\alpha$, see \eqref{OPEQ}.

First, taking a test function $f\in C^1_0$, $f\ge 0$, $\|f\|\ci{L^2(\mu)}>0$, and noticing that $|(Vf)(s)|\ge C/|s|$  for large $|s|$, we can see that the boundedness of the operator $V$ implies that $\int(1+s^2)^{-1}\,d\nu(s)<\infty$.

Next, we want to show that the representation formula \eqref{repr-V} holds on functions of the form $(t-\lambda)^{-1}$ for all $\lambda\in \C\setminus\R$.

Take $f(t)=(t-\lambda)^{-1}$. Notice that $f \in L^2(\mu)\cap L^2(\nu)$. Consider a family of cut-off functions $h_n$, $n\in\N $, such that $0\le h_n \le 1$, $h_n \equiv 1 $ on $[-n, n]$ and $|h'_n(t)|\le 1$.  Then for each $\lambda \in \C\setminus \R$ the family of functions $\{f_n\}$,  $f_n(t): = h_n(t)(t-\lambda)^{-1}$ satisfies the assumptions of Lemma \ref{lim-repr-V}, so the representation formula  \eqref{repr-V} holds for the functions $f$, $f(t) =(t-\lambda)^{-1}$. 

With this extension of formula \eqref{repr-V} we prove an identity that is an analog to \eqref{MS}. Namely, for $\lambda\in\C\setminus\R$ we have
\begin{eqnarray}\label{OPEQ}
V(A_\alpha-\lambda \OID)^{-1}&=&(M_s-\lambda)^{-1}V
\end{eqnarray}
on $L^2(\mu)$, where $A_\alpha=M_t+\alpha(\fdot,\ID_t)\ID_t$.

To show this, fix $\lambda\in\C\setminus\R$. Since $\frac{1}{s-\lambda}-\frac{1}{t-\lambda}=\frac{s-t}{(t-\lambda)(s-\lambda)}$, the representation formula \eqref{repr-V} gives us 
\[
(V (t-\lambda)^{-1})(s)
=\left[1+\alpha((t-\lambda)^{-1},\ID_t)\right](s-\lambda)^{-1}.
\]
That is
\[
V (M_t-\lambda\OID)^{-1} \ID_t
=\left[1+\alpha((M_t-\lambda\OID)^{-1}\ID_t,\ID_t)\right](M_s-\lambda\OID)^{-1} \ID_s.
\]

Due to resolvent equality \eqref{singres}, we get
\begin{align*}
&(M_s-\lambda\OID)^{-1} \ID_s
=
V \left[1+\alpha((M_t-\lambda\OID)^{-1}\ID_t,\ID_t)\right]^{-1} (M_t-\lambda\OID)^{-1} \ID_t\\
=&
V \left[1-\frac{\alpha((M_t-\lambda\OID)^{-1}\ID_t,\ID_t)}{1+\alpha((M_t-\lambda\OID)^{-1}\ID_t,\ID_t)}\right] (M_t-\lambda\OID)^{-1} \ID_t\\
=&V(A_\alpha-\lambda \OID)^{-1}\ID_t.
\end{align*}

For $\tau\in\C\setminus\R$, we know the (usual) resolvent identity
\begin{eqnarray*}
(A_\alpha-\lambda \OID)^{-1}(A_\alpha-\tau \OID)^{-1}
&=&
\left[(A_\alpha-\lambda \OID)^{-1}-(A_\alpha-\tau \OID)^{-1}\right](\lambda-\tau)^{-1}.
\end{eqnarray*}
Combination of the latter two equations yields
\[
V(A_\alpha-\lambda \OID)^{-1}(A_\alpha-\tau \OID)^{-1}\ID_t=\frac{1}{(s-\lambda)}V(A_\alpha-\tau \OID)^{-1}\ID_t.
\]
Identity \eqref{OPEQ} now follows from cyclicity of $\ID_t$ for $A_\alpha$, see Lemma \ref{cyc}.

Writing identity \eqref{OPEQ} for $\bar\lambda$ instead of $\lambda$ and taking the adjoint ,we have
\begin{eqnarray}
(A_\alpha-\lambda \OID)^{-1}V^*&=&V^*(s-\lambda)^{-1}.\label{commu4}
\end{eqnarray}

Combination of  \eqref{OPEQ} and \eqref{commu4} yields
\[
VV^*(M_s-\lambda\OID)^{-1}=(M_s-\lambda\OID)^{-1}VV^*
\qquad \forall\lambda\in\C\setminus\R,
\]
i.e.~$VV^*$ commutes with the spectral measures of $M_s$.

The second statement is a standard result in operator theory.
\end{proof}

\begin{lem}
\label{l-kernel}
Under the assumptions of Theorem \ref{rigTM},  $\Ker V^*=\{0\}$. 
\end{lem}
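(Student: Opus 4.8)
The plan is to prove the equivalent statement that $\Ran V$ is dense in $L^2(\nu)$, since for a bounded operator $\Ker V^*=(\Ran V)^\perp$. First I would recall from the proof of Lemma \ref{l-commute} that the representation formula \eqref{repr-V} is valid on the functions $f(t)=(t-\lambda)^{-1}$, $\lambda\in\C\setminus\R$ (each of which lies in $L^2(\mu)$ because $\int(1+|t|)^{-1}\,d\mu(t)<\infty$), and that it gives there
\begin{equation*}
\left(V(t-\lambda)^{-1}\right)(s)=c(\lambda)\,(s-\lambda)^{-1},\qquad c(\lambda):=1+\alpha\int\ci\R\frac{d\mu(t)}{t-\lambda}.
\end{equation*}
Thus $\Ran V$ contains every scalar multiple $c(\lambda)(s-\lambda)^{-1}$, and the whole matter reduces to two points: that $c(\lambda)\ne 0$, and that the family $\{(s-\lambda)^{-1}\}$ spans a dense subspace of $L^2(\nu)$.

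For the first point, write $F(\lambda)=\int\ci\R(t-\lambda)^{-1}\,d\mu(t)$, so that $\im F(\lambda)=\im\lambda\int\ci\R|t-\lambda|^{-2}\,d\mu(t)$. Since $\mu$ is supported on at least two points it is nonzero, and hence $\im F(\lambda)\ne 0$ for every $\lambda\in\C\setminus\R$. Then $\im\bigl(\alpha F(\lambda)\bigr)=\alpha\,\im F(\lambda)$ shows that $c(\lambda)=1+\alpha F(\lambda)\ne 0$ for every $\alpha\in\R$ and every $\lambda\in\C\setminus\R$ (the case $\alpha=0$ being immediate). Consequently $(s-\lambda)^{-1}\in\Ran V$ for all $\lambda\in\C\setminus\R$.

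For the second point, let $g\in\Ker V^*$, i.e.~$g\perp\Ran V$. Then
\begin{equation*}
\int\ci\R\frac{\overline{g(s)}}{s-\lambda}\,d\nu(s)=0\qquad\forall\lambda\in\C\setminus\R.
\end{equation*}
Recall from the proof of Lemma \ref{l-commute} that $\int(1+s^2)^{-1}\,d\nu(s)<\infty$; combined with $g\in L^2(\nu)$ and the Cauchy--Schwarz inequality this yields $\int(1+|s|)^{-1}|g|\,d\nu<\infty$, so $d\rho:=\overline{g}\,d\nu$ belongs to the same class as $\mu$ and $\nu$, and its Cauchy transform is defined. The display above says that this transform vanishes on both half-planes. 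Forming the difference of boundary values,
\begin{equation*}
\frac{1}{2\pi i}\left[\int\frac{d\rho(s)}{s-(x+i\e)}-\int\frac{d\rho(s)}{s-(x-i\e)}\right]=(P_\e*\rho)(x)\equiv 0,
\end{equation*}
and since $P_\e*\rho\to\rho$ weakly as $\e\searrow 0$ we conclude $\rho=0$, that is $g=0$ $\nu$-a.e. Hence $\Ker V^*=\{0\}$.

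The only genuinely delicate step is this last uniqueness argument, since $\nu$ need not be finite; the weak convergence $P_\e*\rho\to\rho$ is justified by testing against $\f\in C_c(\R)$ and invoking $\int(1+|s|)^{-1}\,d|\rho|<\infty$ to apply the Dominated Convergence Theorem. Everything else is bookkeeping with facts already in hand.
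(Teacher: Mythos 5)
Your proof is correct, and it takes a genuinely different route from the paper's. The paper first uses Lemma \ref{l-commute} to see that $\Ker V^*=\Ker VV^*$ is a spectral subspace of $M_s$, i.e.~of the form $\{f\in L^2(\nu):\chi\ci{\R\setminus E}f=0\}$, and then rules out $\nu(E)>0$ by pairing $\chi\ci{E\cap[a,b]}$ against $Vg$ for a bump $g$ supported on one side of $[a,b]$: on the product of two separated supports the kernel $1/(s-t)$ has a definite sign, so the pairing is strictly positive. This is exactly where the hypothesis that $\supp\mu$ contains at least two points is used (to guarantee $\mu$-mass on the correct side of any piece of $E$). You instead prove density of $\Ran V$ directly: the functions $(s-\lambda)^{-1}$ lie in the range because $c(\lambda)=1+\alpha F(\lambda)\ne 0$ (for $\alpha\ne 0$ its imaginary part equals $\alpha\,\im\lambda\int|t-\lambda|^{-2}\,d\mu\ne 0$), and any $g\perp\Ran V$ then has a Cauchy transform vanishing in both half-planes, hence $\overline g\,d\nu=0$ by the classical uniqueness theorem, which you correctly reduce to $P_\e*\rho\equiv 0$ plus weak-$*$ convergence (the integrability $\int(1+|s|)^{-1}|g|\,d\nu<\infty$ needed for this does follow from $\int(1+s^2)^{-1}\,d\nu<\infty$ and Cauchy--Schwarz). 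Your argument bypasses the commutation Lemma \ref{l-commute} and the sign of the kernel entirely, and needs only $\mu\ne 0$ rather than two points in the support; what it leans on instead is the validity of \eqref{repr-V} on the resolvent functions $(t-\lambda)^{-1}$ and the bound $\int(1+s^2)^{-1}\,d\nu<\infty$, both of which are established in the proof of Lemma \ref{l-commute} and are legitimate to cite. The paper's argument buys a purely real-variable proof using only the singular-integral bilinear form on separated supports; yours buys a shorter, more self-contained argument at the price of invoking the uniqueness theorem for Cauchy transforms of (possibly infinite, but $(1+|s|)^{-1}$-integrable) complex measures.
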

\begin{proof}
Since $\Ker V^* =\Ker VV^*$ and $VV^*$ commutes with $M_s$ (so $VV^*$ is a multiplication operator $M_\psi$), the kernel $\Ker V^*$ is a spectral subspace of $M_s$. Namely, there exists a Borel subset $E\subset \R$ such that
\[
\Ker V^* = \{f \in L^2(\nu)\,:\, \chi\ci{\R\backslash E} f =0\}.
\]

Assume $\Ker V^*\neq \{0\}$. Then $\nu(E)>0$. We obtain a contradiction by constructing a function $f\in\{f \in L^2(\nu)\,:\, \chi\ci{\R\backslash E} f =0\}$ such that $f\notin\Ker V^*$.

By assumption $\supp\mu$ consists of at least two points. Let $a\in\R$ such that there exist $I_1\Subset (-\infty,a), I_2\Subset  (a, \infty)$ with $\mu(I_1)>0, \mu(I_2)>0$. We need to consider two cases.

If $\nu(E\cap[a, \infty))>0$, we can pick $b\in\R$ such that $\nu(E\cap[a,b])>0$. Let $f=\chi\ci{E\cap[a,b]}$. Recall that $\int (1+s^2)^{-1}\,\dd\nu(s)<\infty$ (see proof of Lemma \ref{l-commute}). Hence $f\in L^2(\nu)$ and $\chi\ci{\R\backslash E} f =0$. Take $g\in C_0^1$ such that $g|\ci{I_1}=1$ and so that $g$ and $f$ have separated compact support. We have
\[
(f,Vg)\ci{L^2(\nu)}=\int\ci{E\cap[a,b]}\int\ci{I_1}\frac{f(s)\overline{g(t)}}{s-t}\,\dd\mu(t)\dd \nu(s)>0,
\]
since $\int\ci{I_1}\frac{\overline{g(t)}}{s-t}\,\dd\mu(t)>0$ for all $s\in E\cap[a,b]$.

Because $(V^*f,g)\ci{L^2(\mu)}=(f,Vg)\ci{L^2(\nu)}$, we have $f\notin\Ker V^*$.

Consider the case $\nu(E\cap[a, \infty))=0$. Recall $\nu(E)>0$. So $\nu(E\cap(\infty, a])>0$ and an analogous argument 
yields the desired contradiction.

The assumption $\Ker V^*\neq \{0\}$ was wrong. 
\end{proof}

With these two lemmata, we prove the rigidity theorem, Theorem \ref{rigTM}.

\begin{proof}[Proof of Theorem \ref{rigTM}]
Assume the hypotheses of Theorem \ref{rigTM}. In particular, $\Ker V=\{0\}$. With Lemmata \ref{l-commute} and \ref{l-kernel} we have $VV^*=M_{\psi}$, $\psi\in L^\infty(\nu)$, and it holds $\Ker V^*=\{0\}$.

Let us conclude the first statement.
Since $VV^*\ge0$, we have $\psi\ge0$ and the existence of operator $|V^*|=(VV^*)^{1/2} =M_{\psi^{1/2}} $. Writing polar decomposition, we get $V^* = \widetilde U |V^*|$ for some partial isometry $\widetilde U$. Note that $h^{-1}=\psi^{1/2}\in L^\infty(\nu)$. Taking $U:=\widetilde U^* = M_{h}V$. It remains to show that $\widetilde U$ is a unitary operator. We have $\Ker \widetilde U=\Ker V^*=\{0\}$. Let us show surjectivity. From $\Ker V=\{0\}$ and $h^{-1}\in L^\infty(\nu)$ it follows $\Ker \widetilde U^*=\{0\}$. By definition (polar decomposition) we have that $\Ran \widetilde U$ is closed. So also $\Ran \widetilde U=[\Ker \widetilde U^*]^\perp=L^2(\mu)$ and $\widetilde U$ is unitary.

Let us show the second part of the rigidity theorem, namely $UA_\alpha = M_s U$, where $M_s$ is the multiplication by the independent variable $s$ in $L^2(\nu)$. Consider the case of bounded $A$. From the proof of the first statement we extract $U= M_{\psi^{-1/2}}V$ and $\psi^{1/2}\in L^\infty(\nu).$ Substitution of $V$ into identity \eqref{MS} yields $M_sM_{\psi^{1/2}}U=M_{\psi^{1/2}}UA_\alpha$. Because multiplication operators commute, we get the second part of the rigidity theorem for bounded operators.
The unbounded case follows in analogy using \eqref{OPEQ} instead of \eqref{MS}.
\end{proof}

\section{Singular integral operators}\label{SIO}
Functions $f$ and $g$ are said to be of \emph{separated compact supports}, if $\supp f$ and $\supp g$ are compact sets and $\dist(\supp f,\supp g)>0$.

Let $K(s, t)$ be a function (kernel) which is bounded on each set $\{(s, t): |s-t|>\e\}$, $\e>0$. 

By a \emph{singular integral operator} (see \cite{NTV}), henceforth referred to as SIO, $T: L^2(\mu)\to L^2(\nu)$ with \emph{kernel} $K(s,t)$ we mean a bounded operator $T: L^2(\mu)\to L^2(\nu)$ such that for $f\in L^2(\mu)$ and $g\in  L^2(\nu)$ with separated compact supports
\[
(Tf,g)\ci{L^2(\nu)}=\iint K(s,t)f(t)\overline{g(s)}\,\dd\mu(t)\,\dd\nu(s).
\]
Notice, due to the condition of separated compact supports, the integral 
is well-defined.

\subsection{Unitary operator $V_\alpha$ is a singular integral operator}

\begin{lem}\label{VSIO}
Operator $V_\alpha : L^2(\mu)\to L^2(\mu_\alpha )$ from Theorem \ref{t-repr-V} is a SIO with kernel $K(s,t)=-\alpha(s-t)^{-1}$, i.e.~
\begin{equation}
\label{SIO-1}
(V_\alpha f,g)\ci{L^2(\mu_\alpha )}=-\alpha\iint \frac{f(t)\overline{g(s)}}{s-t}\,\dd\mu(t)\,\dd\mu_\alpha (s)
\end{equation}
for all $f\in L^2(\mu)$ and $g\in L^2(\mu_\alpha )$ with separated compact supports. 
\end{lem}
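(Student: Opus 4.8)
The plan is to read \eqref{SIO-1} off the representation formula \eqref{repr-V} of Theorem \ref{t-repr-V}, the only substantive step being a density argument that upgrades $C^1_0$ test functions to arbitrary compactly supported $f\in L^2(\mu)$. Observe first that $V_\alpha$ is unitary, hence bounded, and that the kernel $K(s,t)=-\alpha(s-t)^{-1}$ obeys $|K(s,t)|\le|\alpha|/\e$ on $\{|s-t|>\e\}$, so the two structural requirements in the definition of an SIO hold automatically; the content of the lemma is the integral identity \eqref{SIO-1}.

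First I would establish \eqref{SIO-1} for $f\in C^1_0(\R)$ with $\supp f$ separated from $\supp g$. For such $f$ formula \eqref{repr-V} applies, and since $\dist(\supp f,\supp g)>0$ we have $f(s)=0$ for every $s\in\supp g$, in particular for $\mu_\alpha$-a.e.\ $s$ where $g(s)\ne0$. Substituting \eqref{repr-V} into the pairing $(V_\alpha f,g)\ci{L^2(\mu_\alpha)}$, the local term $f(s)$ pairs to zero against $g$, and in the remaining integral the difference quotient $\frac{f(s)-f(t)}{s-t}$ collapses to a Cauchy-kernel expression in which $(s-t)^{-1}$ is bounded by $\dist(\supp f,\supp g)^{-1}$ on the rectangle $\supp f\times\supp g$. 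Because $\mu$ and $\mu_\alpha$ are finite on compact sets---a consequence of $\int(1+|t|)^{-1}\,\dd\mu<\infty$ and, for $\mu_\alpha$, of Lemma \ref{l-falpha}---the resulting double integral converges absolutely, so Fubini applies and returns precisely the double integral in \eqref{SIO-1}.

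To remove the $C^1_0$ hypothesis, take an arbitrary $f\in L^2(\mu)$ with compact support separated from $\supp g$, fix a compact neighborhood $K\supset\supp f$ with $\dist(K,\supp g)>0$, and choose $f_n\in C^1_0$ supported in $K$ with $f_n\to f$ in $L^2(\mu)$ (possible since $C^1_0$ functions supported in $K$ are dense in $L^2(\mu|\ci{K})$, e.g.\ by mollification after a cutoff equal to $1$ on $\supp f$). Each $f_n$ still has support separated from $\supp g$, so the previous step gives \eqref{SIO-1} for $f_n$. Both sides of \eqref{SIO-1} are continuous in $f$: the left side because $V_\alpha$ is bounded, and the right side because the bound $|K(s,t)|\le\dist(K,\supp g)^{-1}$ on $K\times\supp g$ together with the Cauchy--Schwarz inequality yields
\[
\left|\iint\frac{(f_n-f)(t)\,\overline{g(s)}}{s-t}\,\dd\mu(t)\,\dd\mu_\alpha(s)\right|
\le \frac{\mu(K)^{1/2}\,\mu_\alpha(\supp g)^{1/2}}{\dist(K,\supp g)}\,\|f_n-f\|\ci{L^2(\mu)}\,\|g\|\ci{L^2(\mu_\alpha)}.
\]
Letting $n\to\infty$ transfers \eqref{SIO-1} from $f_n$ to $f$, which completes the proof.

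I expect the main obstacle to be bookkeeping the convergence justifications rather than any deep difficulty: one must make sure the double integral is absolutely convergent (so that Fubini is legitimate and the local term genuinely drops, instead of leaving a principal-value contribution), and that the $C^1_0$ approximants can be taken supported in a fixed compact set away from $\supp g$ while converging in $L^2(\mu)$. Each of these rests only on the separation of supports, which renders the Cauchy kernel bounded on the relevant rectangle, and on the finiteness of $\mu$ and $\mu_\alpha$ on compact sets.
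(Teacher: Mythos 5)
Your proposal is correct and follows essentially the same route as the paper's own proof: establish \eqref{SIO-1} for $f\in C^1_0$ directly from \eqref{repr-V} (the local term $f(s)$ vanishes on $\supp g$ by separation of supports, and the kernel is bounded on $\supp f\times\supp g$), then pass to general compactly supported $f\in L^2(\mu)$ by approximating with $C^1_0$ functions supported in a fixed compact $K$ separated from $\supp g$ and using boundedness of $V_\alpha$ together with the uniform kernel bound $|s-t|^{-1}\le\dist(K,\supp g)^{-1}$. The extra care you take with absolute convergence and Fubini is sound and only makes explicit what the paper leaves implicit.
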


\begin{proof}

Formula \eqref{repr-V} implies that \eqref{SIO-1} holds
for  $f\in C^1_0$  and $g\in L^2(\mu_\alpha )$ if $f$ and $g$ have separated compact supports. 

To show that the same formula holds for arbitrary $f\in L^2(\mu)$ and $g\in L^2(\mu_\alpha )$ with separated compact supports, let us take a compact set $K$ such  that $\supp f\Subset K$ and $\dist(K,\supp g)>0$ and a sequence  $\{f_n\}$ of   $C^1_0$ functions so that $\supp f_n\subset K$ for all $n$ and such that $f_n\to f$ in $L^2(\mu)$.

Trivially $\lim_{n\to\infty}(V_\alpha f_n, g) = (V_\alpha f,g)$. Since $|s-t|^{-1}\le 1/\dist\{K, \supp g\}$ for $t\in K$, $s\in \supp f$, one can easily see that 
$$
\lim_{n\to \infty} \int \frac{f_n(t)\overline{g(s)}}{s-t}\,\dd\mu(t)\,\dd\mu_\alpha (s) = \int \frac{f(t)\overline{g(s)}}{s-t}\,\dd\mu(t)\,\dd\mu_\alpha (s),
$$
which proves the lemma. \end{proof}

\subsection{Cauchy transform  acting $L^2(\mu)\to L^2(\mu_\alpha)$ and its regularizations}

It is well-known in the theory of singular integral operators, that if a singular operator $T$ with  a Calderon--Zygmund kernel%
\footnote{Calderon--Zygmund means that the kernel $K$ satisfies some growth and smoothness estimates. Without giving the definition let us only mention that $1/(s-t)$ is one of the classical examples of a Calderon--Zygmund kernel.}
 $K$ is bounded on $L^2$, then the truncated operators $\widetilde T_\e$, where
$$
\widetilde T_\e f (s) = \int_{|t-s|>\e} K(s, t) f(t) \,dt,
$$
are uniformly (in $\e$) bounded. Also, this fact remains true, if instead of truncations, one considers any reasonable regularization of the kernel $K$. 

However, the classical theory does not apply in our case, because we integrate with respect to the measure $\mu$ which does not satisfy the doubling condition. Moreover, even the recently developed theory, see \cite{NTV}, of singular integral operators on non-homogeneous spaces (i.e.~with non-doubling measure) does not work here, because, first this theory works only for one weighted case (the same measure in the target space), and second, the measure $\mu$ has to satisfy a growth condition ($\mu([a-\e, a+\e])\le C\e$ uniformly in $a$ and $\e$).

And the measure $\mu$ appearing in our situation can be any Radon measure. So no known result about singular integrals can be applied here.

Nevertheless, it still can be shown that the following regularized operators are uniformly bounded operators acting from $L^2(\mu)$ to $L^2(\mu_\alpha)$.

Let $T_\e=(T_\mu)_\e$, $\e>0$ be the integral operator with kernel $(s-t+i\e)^{-1}$, 
\begin{align}\label{def-Te}
T_\e f(s) := \int \frac{f(t)}{s-t+i\e} \,d\mu(t),
\end{align}
and let $\widetilde T_\e=(\widetilde T_\mu)_\e$ be the truncated operator, 
$$
\widetilde T_\e f(s) := \int_{|t-s|>\e} \frac{f(t)}{s-t} \,d\mu(t) . 
$$
Note, it is trivial that both $T_\e$ and $\widetilde T_\e$  are well-defined for compactly supported $f$. It is also not hard to show - using Cauchy--Schwartz - that, if $\int(1+x^2)^{-1} \,d\mu(x)<\infty$, then the operators are well-defined for all $f\in L^2(\mu)$.

\begin{theo}\label{reg-2}
Let $\mu$ and $\mu_\alpha$ be the spectral measures of $A$ and $A_\alpha$, correspondingly.

Then the regularized operators $T_\e= (T_\mu)_\e :L^2(\mu)\to L^2(\mu_\alpha )$ defined by  \eqref{def-Te} are uniformly bounded  $\| T_\e\|\ci{L^2(\mu)\to L^2(\mu_\alpha)} \le 2|\alpha|^{-1}$.

Moreover, the weak limit $T$ of $T_\e$ exists as $\e\to 0^+$, and operator $V_\alpha$ has the alternative representation
\begin{align}\label{Tweak}
V_\alpha f(s)=f(s)(\ID-\alpha \,T\ID)+\alpha \,Tf
\end{align}
for all  $f\in L^2(\mu)$.

Finally, for any $f\in C^1_0$
\[
\lim_{\e \to 0^+} (T_\e f )(s) = T f(s)
\]
$\mu_\alpha$-a.e. 
\end{theo}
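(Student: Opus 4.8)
The plan is to reduce the $\mu_\alpha$-a.e.\ convergence to the boundary behaviour of the Cauchy transform of $\mu$. For $f\in C^1_0$ and each fixed $s$ I would split the integrand as $f(t)=(f(t)-f(s))+f(s)$, obtaining
\[
T_\e f(s) = \int \frac{f(t)-f(s)}{s-t+i\e}\,d\mu(t) + f(s)\,(T_\e\ID)(s),\qquad (T_\e\ID)(s)=-F(s+i\e),
\]
where $F(z):=\int(t-z)^{-1}\,d\mu(t)$ is the Cauchy transform of $\mu$. Since $\int(1+|t|)^{-1}\,d\mu<\infty$, both integrals converge absolutely and the splitting is legitimate. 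All the singular behaviour is now concentrated in the term $F(s+i\e)$.

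The first, ``regular'' integral converges for every $s$. Indeed $|f(t)-f(s)|\le\|f'\|_\infty|t-s|$ and $|s-t+i\e|\ge|s-t|$ bound the integrand by $\|f'\|_\infty$ for $|s-t|\le1$, while for $|s-t|>1$ one has $|f(t)-f(s)|/|s-t+i\e|\le C(s)/(1+|t|)$; this is exactly the domination used in the proof of Lemma \ref{lim-repr-V}. The Dominated Convergence Theorem then gives, for every $s$,
\[
\int \frac{f(t)-f(s)}{s-t+i\e}\,d\mu(t)\longrightarrow \int \frac{f(t)-f(s)}{s-t}\,d\mu(t).
\]

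The crux is to show that $F(s+i\e)$ tends to a finite limit for $\mu_\alpha$-a.e.\ $s$. This does not follow from the classical Fatou theorem alone, because $\mu_\alpha$ may be singular with respect to Lebesgue measure. Here I would invoke the Aronszajn--Krein formula $F_\alpha=F/(1+\alpha F)$ (used already in the proof of Lemma \ref{l-falpha}), rewritten as $F=F_\alpha/(1-\alpha F_\alpha)$, and split $\mu_\alpha=\mu_\alpha^{ac}+\mu_\alpha^{s}$. On $\mu_\alpha^{ac}$ the transform $F$ has finite boundary values Lebesgue-a.e., hence $\mu_\alpha^{ac}$-a.e.; on $\mu_\alpha^{s}$ the Poisson integral of $\mu_\alpha$ blows up, i.e.\ $\im F_\alpha(s+i\e)\to+\infty$ for $\mu_\alpha^{s}$-a.e.\ $s$, whence $F=F_\alpha/(1-\alpha F_\alpha)\to-1/\alpha$. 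Thus $\lim_{\e\to0^+}F(s+i\e)$ exists and is finite $\mu_\alpha$-a.e. This boundary-value analysis is the main obstacle: it is precisely where the spectral relationship between $\mu$ and $\mu_\alpha$ is used, and the a.c./singular dichotomy must be handled with care.

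Combining the two terms, $T_\e f(s)$ converges $\mu_\alpha$-a.e.\ to some function $g$, and it remains to identify $g$ with $Tf$. The uniform bound $\|T_\e\|_{L^2(\mu)\to L^2(\mu_\alpha)}\le 2|\alpha|^{-1}$ shows that $\{T_\e f\}$ is bounded in $L^2(\mu_\alpha)$, while the second part of the theorem gives $T_\e f\to Tf$ weakly in $L^2(\mu_\alpha)$. A family bounded in $L^2(\mu_\alpha)$ that converges both $\mu_\alpha$-a.e.\ and weakly must have equal limits: testing against an arbitrary bounded, compactly supported $\phi$ and using the Vitali convergence theorem (the $L^2$ bound supplies uniform integrability on the finite-measure set $\supp\phi$) yields $(g,\phi)=(Tf,\phi)$. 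Since such $\phi$ are dense in $L^2(\mu_\alpha)$, we conclude $g=Tf$ $\mu_\alpha$-a.e., which is the desired statement.
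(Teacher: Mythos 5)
Your treatment of the last assertion (the $\mu_\alpha$-a.e.\ convergence) is correct and is essentially the paper's own argument: the same splitting of $T_\e f$ into a ``regular'' difference-quotient integral plus $f(s)\,T_\e\ID(s)=-f(s)F(s+i\e)$, the same domination for the regular term, and the Aronszajn--Krein formula to get finite boundary values of $F$ at $\mu_\alpha$-a.e.\ points (the paper simply cites this as Aronszajn--Donoghue theory; your a.c./singular dichotomy, with $F\to F_\alpha/(1-\alpha F_\alpha)\to -1/\alpha$ on the singular part, is exactly its proof). Your identification of the pointwise limit with the weak limit via an $L^2$-bound-plus-Vitali argument is a legitimate substitute for the paper's Lemma \ref{wkeqae} (which uses Mazur's lemma instead).

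The genuine gap is that this is a proof of only one of the three assertions. The uniform bound $\|T_\e\|\ci{L^2(\mu)\to L^2(\mu_\alpha)}\le 2|\alpha|^{-1}$ is invoked (``the uniform bound \dots shows that $\{T_\e f\}$ is bounded'') but never established, and it is the substantive content of the theorem: it cannot come from classical singular-integral theory, since $\mu$ and $\mu_\alpha$ are arbitrary Radon measures with no doubling or growth hypotheses. The paper obtains it from the representation \eqref{repr-V} together with the unitarity of $V_\alpha$: comparing $V_\alpha f$ with $e^{ias}V_\alpha[e^{-iat}f]$ shows that the operators with the \emph{bounded} kernels $(1-e^{ia(s-t)})/(s-t)$ have norm at most $2|\alpha|^{-1}$ uniformly in $a\in\R$, and averaging over $a\ge 0$ against the weight $\e e^{-\e a}\,da$ reproduces exactly the kernel $(s-t+i\e)^{-1}$. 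Without some such argument the $L^2(\mu_\alpha)$-boundedness of $\{T_\e f\}$, and hence your Vitali step, is unsupported. Similarly, the existence of the weak limit $T$ and the representation \eqref{Tweak} are assumed rather than proved; your appeal to ``the second part of the theorem'' for the weak convergence is not fatally circular --- in the paper the existence of the weak limit is itself deduced from the pointwise convergence you establish, since every subsequential weak limit must agree with the pointwise limit on the dense set $C^1_0$ --- but that deduction, and the derivation of \eqref{Tweak} from \eqref{repr-V} by passing to the limit in the difference-quotient identity, still have to be carried out.
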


\begin{rem*}
If $\ID \notin L^2(\mu)$, the function $T\ID$ can be defined, for example, by duality,
\[
\int  T1 \overline f d\mu_\alpha = \int \overline{ T^*f} d\mu
\]
for all compactly supported $f\in L^2(\mu_\alpha)$. 
Note, since $\int(1+|x|)^{-1} d\mu(x)<\infty$, the integral $ \int \overline{ T^*f} d\mu$ is well-defined. 
It is easy to see from the proof that $T\ID$ coincides with $-F(x+i0^+)$, $F(x +i0^+) := \lim_{\e\to 0^+} F(x+i\e)$, where
\[
F(z) = \int \frac{ d\mu(t) }{t-z}\,.
\]
\end{rem*}
\begin{rem*}
For purely singular measure $\mu$ the $\mu_\alpha$-a.e.~convergence of $T_\e f$ for all $f\in L^2(\mu)$ (not only for $f\in C_0^1$)
 was settled by Poltoratski's Theorem in \cite{NONTAN}.  Apparently, as it came out of our communications with A.~Poltoratski and other experts in this area, it is possible to prove $\mu_\alpha$-a.e.~convergence for all $f\in L^2(\mu)$ in the general case, although it is hard to present a formal reference. 
 
However, for our purposes a simpler fact of $\mu_\alpha$-a.e.~convergence for all $f\in C^1_0$,  is sufficient. 
\end{rem*}

\begin{proof}[Proof of Theorem \ref{reg-2}]
To prove the first statement, let $V_\alpha :L^2(\mu)\to L^2(\mu_\alpha )$ be the spectral representation of $A_\alpha$ from Theorem \ref{t-repr-V}.
Using formula \eqref{repr-V} it is easy to see that for all real $a$ and compactly supported $f\in C^1$ it holds
\begin{align*}
V_\alpha f(s)-e^{ias}V_\alpha [e^{-iat}f] (s)
=
\alpha \int\frac{f(t)(1-e^{ia(s-t)})}{s-t}d\mu(t).
\end{align*}
Note that the kernel $\frac{(1-e^{ia(s-t)})}{s-t}$ is bounded, so the integral is well-defined for compactly supported functions $f\in L^2(\mu)$. 

Since $\|V_\alpha\|=1$ and  multiplication by $e^{-iax}$ is a unitary operator in $L^2(\mu)$ and $L^2(\mu_\alpha)$, we have
\[
\left\|\int\frac{f(t)(1-e^{ia(s-t)})}{s-t}d\mu(t)\right\|\ci{L^2(\mu_\alpha )}\le 2|\alpha|^{-1}\|f\|\ci{L^2(\mu)}.
\]

For $\e>0$ we have
$$
\e\int_{0}^\infty \frac{1-e^{ia(s-t)}}{s-t}\, e^{-\e a} da = \frac{1}{s-t+i\e} 
$$
and $\e\int_0^\infty e^{-\e a } \,da = 1$. So, by averaging the integral in the left side of  \eqref{SIO-2} over all $a\ge0$ with weight $\e e^{-\e a}$, we get
\[
\left\|\int\frac{f(t)}{s-t+i\e}d\mu(t)\right\|\ci{L^2(\mu_\alpha )}\le 2|\alpha|^{-1}\|f\|\ci{L^2(\mu)}
\]
for compactly supported $f\in C^1$ and all $\e>0$.

Let us show the existence of the weak limit of $T_\e$.

Take a    convergent  (in weak operator topology) sequence    $T_{\e_k} \to \widehat T $ ,  $\e_k\to 0$, as $k\to\infty$. 
For $f\in C_0^1$, we have that $T_\e f \to  T f$ pointwise $\mu_\alpha$-a.e.~for some operator $T$. Indeed, 
\[
T_\e f(s)
=
\int\frac{f(t)}{s-t+i\e}\,d\mu(t)
=
\int\frac{f(s)-f(t)}{s-t+i\e}\,d\mu(t)
-f(s) T_\e \ID
\]
and note that the integrand on the right hand side remains bounded as $\e\to0$ for compactly supported $C^1$ functions $f$. For the second term on the right hand side, recall that we denote by $w$ the density function of operator $A$'s spectral measure. Aronszajn--Donoghue's theory on rank one perturbations says that $- F(\fdot +i\e ) = T_\e \ID\to -\pi w$ a.e.~with respect to the Lebesgue measure and $- F(\fdot +i\e ) =T_\e \ID \to \alpha^{-1}$a.e.~with respect to $(\mu_\alpha)\ti{s}$~as $\e\to 0$, see e.g.~\cite{SIMREV}. So for $f\in C_0^1$ we have that $T_\e f \to   T f$ pointwise $\mu_\alpha$ almost everywhere.

Lemma \ref{wkeqae} below shows that $\widehat T f=  T f$ for all $f\in C_0^1$, so $\text{w.o.t.-}\lim_{k\to\infty}T_{\e_k} = \widehat T= T$. 

Since the operators  $T_\e$ are uniformly bounded, any sequence $\e_k\to 0$ has a subsequence $\e_{k_n}$ such that $T_{\e_{k_n}}$ converges in weak operator topology. As we discussed above  this limit must be $T$. And that means $\text{w.o.t-} \lim_{\e\to0} T_{\e} = T $.

Let us prove representation formula \eqref{Tweak}. 
Take  $f\in C_0^1$. 

By the Dominated Convergence Theorem we have
\[
\int\frac{f(s) - f(t)}{s-t}d\mu(t)
\,=\,
\lim_{\e\to0^+}\left[\int\frac{f(s)}{s-t+i\e}d\mu(t) - \int \frac{f(t)}{s-t+i\e}d\mu(t)\right]
\]
for all real $s$.

From the first part of the theorem, it follows that the second integral converges weakly in $L^2(\mu_\alpha)$ and $\mu_\alpha$-a.e.~to $Tf$ as $\e\to 0$.

It is an easy exercise to show that the first integral converges weakly in $L^2(\mu_\alpha)$ to $f T\ID = -f F(\fdot +i0^+) $, and $\mu_\alpha$-a.e.~convergence was shown above in the proof. 

Representation \eqref{Tweak} now immediately follows from \eqref{repr-V}. 
\end{proof}

The lemma below is well-known. We present the proof only for the sake of completeness. 

\begin{lem}\label{wkeqae}
Let $\eta$ be a measure.
If a sequence of functions $f_n$ converges to $f$ weakly in $L^2(\eta)$ and to $g$ pointwise $\eta$-a.e., then we have $f=g$ in $L^2(\eta)$.
\end{lem}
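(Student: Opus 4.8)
The plan is to deduce the identification $f=g$ from two standard facts: weak convergence controls integrals against $L^2$ test functions, while pointwise a.e.\ convergence controls the values, and Egorov's theorem is the bridge between them. First I would record two preliminary reductions. Since $f_n\to f$ weakly in $L^2(\eta)$, the sequence is weakly bounded, so by the Banach--Steinhaus (uniform boundedness) principle it is norm bounded: $M:=\sup_n\|f_n\|\ci{L^2(\eta)}<\infty$. Applying Fatou's lemma to $|f_n|^2\to|g|^2$ $\eta$-a.e.\ then gives $g\in L^2(\eta)$ with $\|g\|\ci{L^2(\eta)}\le M$, so that $f-g\in L^2(\eta)$ and in particular $f-g$ is supported on a $\sigma$-finite set.

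Next I would show $\int_E f\,d\eta=\int_E g\,d\eta$ for every measurable $E$ with $\eta(E)<\infty$. For such $E$ we have $\chi\ci{E}\in L^2(\eta)$, so weak convergence yields $\int_E f_n\,d\eta\to\int_E f\,d\eta$. To identify the limit with $\int_E g\,d\eta$, fix $\delta>0$ and apply Egorov's theorem on the finite-measure set $E$: there is $E'\subset E$ with $\eta(E\setminus E')<\delta$ on which $f_n\to g$ uniformly. Splitting
\[
\left|\int_E f_n\,d\eta-\int_E g\,d\eta\right|
\le
\left|\int_{E'}(f_n-g)\,d\eta\right|
+\left|\int_{E\setminus E'}(f_n-g)\,d\eta\right|,
\]
the first term tends to $0$ by uniform convergence, and the second is bounded by Cauchy--Schwarz by $(\|f_n\|\ci{L^2(\eta)}+\|g\|\ci{L^2(\eta)})\,\eta(E\setminus E')^{1/2}\le 2M\sqrt{\delta}$. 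Hence $\limsup_n|\int_E f_n\,d\eta-\int_E g\,d\eta|\le 2M\sqrt\delta$, and letting $\delta\to0^+$ gives $\int_E f_n\,d\eta\to\int_E g\,d\eta$, so $\int_E f\,d\eta=\int_E g\,d\eta$.

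Finally, since $f-g\in L^2(\eta)$ is supported on a $\sigma$-finite set, writing that set as an increasing union of finite-measure sets and using that $\int_E(f-g)\,d\eta=0$ for all finite-measure $E$ forces $f-g=0$ $\eta$-a.e., i.e.\ $f=g$ in $L^2(\eta)$. There is no serious obstacle here; the only point requiring care is that $\eta$ need not be finite or $\sigma$-finite globally, which is why I restrict attention to finite-measure test sets and exploit that $L^2$ functions live on $\sigma$-finite sets. An alternative, slightly slicker route would replace the Egorov argument by Mazur's lemma: convex combinations of the $f_n$ converge strongly to $f$, a subsequence of these combinations converges a.e.\ to $f$, and since the same combinations (being tail averages) converge a.e.\ to $g$, one concludes $f=g$ a.e.
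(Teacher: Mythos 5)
Your argument is correct, but your main route differs from the one in the paper. The paper's proof is exactly the ``slicker alternative'' you mention at the end: since closed convex sets are weakly closed, $f$ lies in the norm closure of $\conv\{f_n, f_{n+1},\dots\}$ for every $n$, so suitable tail convex combinations $g_n$ converge to $f$ in $L^2(\eta)$; a subsequence then converges to $f$ $\eta$-a.e., while every $g_n$, being a convex combination of tails, converges pointwise to $g$ wherever $f_n\to g$, whence $f=g$ a.e. That argument is shorter and needs no hypothesis on $\eta$ beyond what is stated. Your primary route --- Banach--Steinhaus to get a uniform bound, Fatou to put $g$ in $L^2(\eta)$, Egorov on finite-measure sets to identify $\lim_n\int_E f_n\,d\eta$ with $\int_E g\,d\eta$, and then the $\sigma$-finiteness of the support of an $L^2$ function to conclude --- is also complete and correct; each step (the Cauchy--Schwarz bound $2M\sqrt\delta$ on the exceptional set, the reduction to finite-measure test sets $E$) is justified. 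What your version buys is that it avoids Mazur's lemma and hence any appeal to Hahn--Banach, at the cost of having to handle the possible non-$\sigma$-finiteness of $\eta$ explicitly; what the paper's version buys is brevity and indifference to the structure of the underlying measure space. Either proof would serve.
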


\begin{proof}
Recall that a closed convex subset of a Banach space is weakly closed (it is a simple corollary of the Hahn--Banach theorem). So we have $f\in\wclos(\conv\{f_{n}, f_{{n+1}}, \hdots\}) = \clos(\conv\{f_{n}, f_{{n+1}}, \hdots\})$ for all $n\in\N$. 
Hence for every $n$ there exists a non-negative sequence $\{\alpha_k^n\}\ci{k\in\N}$  with $\sum_{k\ge n} \alpha_k^n=1$ and such that $g_n=\sum_{k\ge n}\alpha_k^n f_{n}$ converge to $f$ in $L^2(\eta)$. Therefore, one can find a subsequence $g_{n_k}$ such that $g_{n_k}\to f$ $\eta$-a.e.  

On the other hand $\lim g_{n_k}(x) = \lim g_n(x) = g(x)$, so $f=g$ $\eta$-a.e.
\end{proof}

\subsection{Regularization of the Cauchy transform in the general case}
The situation we considered in the previous section is very special, because measures $\mu$ and $\mu_\alpha$ are rigidly related to each other. Theorem below shows that for very general measures, a rather natural and weak assumption of boundedness implies the uniform boundedness of the regularized operators.

Let us recall that two Borel measures $\mu$ and $\nu$ are called \emph{mutually singular} (notation  $\mu\perp\nu$) if they are supported on disjoint sets, i.e.~if there exist Borel sets $E$ and $F$ such that $E\cap F =\varnothing$ and $\mu(E^c) = \nu(F^c)=0$. 

\begin{theo}
\label{reg-1}
Let $\mu$ and $\nu$ be Radon measures on $\R$ such that for their singular parts $\mu\ti{s}\perp\nu\ti{s}$, and such that
\begin{equation}
\label{3.1} 
\left| \iint \frac{f(t)\overline{g(s)}}{s-t}\,\dd\mu(t)\,\dd\nu(s) \right| \le C\|f\|\ci{L^2(\mu)} \|g\|\ci{L^2(\nu)} 
\end{equation}
for all $f$ and $g$ with separated compact supports. 

Then for all $\e>0$ 
$$
\|T_\e f \|\ci{L^2(\nu)} \le 4 C \|f\|\ci{L^2(\mu)} \qquad \forall f\in L^2(\mu), 
$$
and the truncated operators $\widetilde T_\e:L^2(\mu)\to L^2(\nu)$ are also uniformly bounded. 
\end{theo}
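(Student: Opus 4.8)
The plan is to deduce the bound on the regularized operators $T_\e$ from a single uniform estimate on the truncated operators, via the Laplace-transform identity already used in the proof of Theorem \ref{reg-2}. Starting from
\[
\frac{1}{s-t+i\e}=\e\int_0^\infty\frac{1-e^{ia(s-t)}}{s-t}\,e^{-\e a}\,\dd a,\qquad \e\int_0^\infty e^{-\e a}\,\dd a=1,
\]
it suffices to bound, uniformly in the real parameter $a$, the bilinear form with the \emph{bounded} kernel $(1-e^{ia(s-t)})/(s-t)$ and then average. Writing this kernel as $\tfrac{1}{s-t}-\tfrac{e^{ia(s-t)}}{s-t}$ and absorbing the factor $e^{ia(s-t)}=e^{ias}e^{-iat}$ into the arguments, the modulated term becomes a truncated Cauchy pairing of $e^{-iat}f$ against $e^{-ias}g$; since multiplication by $e^{-iax}$ is unitary on both $L^2(\mu)$ and $L^2(\nu)$, the whole form is dominated by twice the norm of the truncated operator $\widetilde T_\rho$. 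Thus the theorem reduces to the single estimate
\[
\left|\iint_{|s-t|>\rho}\frac{f(t)\overline{g(s)}}{s-t}\,\dd\mu(t)\,\dd\nu(s)\right|\le 2C\,\|f\|\ci{L^2(\mu)}\|g\|\ci{L^2(\nu)},
\]
uniformly in $\rho>0$, which yields both $\|T_\e\|\le 4C$ (the factor $2$ from the modulation, times $2C$) and the uniform boundedness of $\widetilde T_\e$.

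To pass from hypothesis \eqref{3.1}, which controls only functions of \emph{separated} compact support, to the truncated form, the key structural input is that the diagonal $\{s=t\}$ is $(\mu\times\nu)$-null: since $\mu\ti{s}\perp\nu\ti{s}$ there are no common atoms, and the absolutely continuous parts carry no atoms, so $(\mu\times\nu)(\{s=t\})=0$. This nullity lets one write the regularized form as the limit as $\rho\to0^+$ of its truncations, the near-diagonal mass disappearing. Decomposing $\mu=\mu\ti{ac}+\mu\ti{s}$ and $\nu=\nu\ti{ac}+\nu\ti{s}$ with respect to Lebesgue measure, I would treat the four cross pairings separately. Three of them, namely $(\mu\ti{s},\nu\ti{s})$, $(\mu\ti{s},\nu\ti{ac})$ and $(\mu\ti{ac},\nu\ti{s})$, consist of \emph{mutually singular} measures — the first by hypothesis, the mixed ones automatically, because a singular and an absolutely continuous measure live on disjoint Borel sets. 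For a mutually singular pair I would exhaust the two carrying sets from inside by compacts, which are then disjoint and hence at positive distance; on such separated compacts \eqref{3.1} applies to the untruncated kernel, and filling up the supports bounds each of these three pieces by $C\,\|f\|\ci{L^2(\mu)}\|g\|\ci{L^2(\nu)}$, uniformly in $\rho$ and with no near-diagonal contribution at all.

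This leaves the absolutely continuous pairing $(\mu\ti{ac},\nu\ti{ac})$, where the two measures may share support and the truncation genuinely matters. Here I would fix the scale $\rho$, partition $\R$ into intervals of length comparable to $\rho$, and split the truncated form into block interactions: blocks at mutual distance $\gtrsim\rho$ have separated supports and are each controlled by \eqref{3.1}, while the finitely many near-diagonal block pairs carry a kernel of size $\lesssim\rho^{-1}$ and, after a secondary Whitney-type subdivision that keeps every sub-pair separated, are again estimated by \eqref{3.1}. The main obstacle — and the step I expect to be the crux — is \emph{summing} these block estimates. For each separated pair \eqref{3.1} gives a bound of the shape $C\,\|f\chi\ci{Q_i}\|\ci{L^2(\mu)}\|g\chi\ci{Q_j}\|\ci{L^2(\nu)}$, but summing these naively over all pairs is an $\ell^1$-type sum that far exceeds $C\,\|f\|\|g\|$; since $\mu$ and $\nu$ satisfy no doubling or growth condition, the usual Schur/almost-orthogonality bookkeeping is unavailable. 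One must instead organize the sum using the antisymmetry of the Cauchy kernel together with the nullity of the diagonal established above, so that the surviving interactions are genuinely off-diagonal and can be recombined back into $\|f\|\ci{L^2(\mu)}\|g\|\ci{L^2(\nu)}$ without loss. Once this uniform control of the block sum is secured, averaging over $a$ as in the first paragraph delivers $\|T_\e f\|\ci{L^2(\nu)}\le 4C\,\|f\|\ci{L^2(\mu)}$, and the same truncated estimate gives the uniform boundedness of the operators $\widetilde T_\e$.
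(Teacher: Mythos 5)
Your proposal has a genuine gap at the step you yourself flag as the crux. The reduction via the Laplace identity and modulation is exactly the paper's first move, and your treatment of the three mutually singular pairings by exhausting disjoint carriers with compacts is fine. But for the $(\mu\ti{ac},\nu\ti{ac})$ pairing you never actually produce the uniform truncated estimate: the Whitney-type block decomposition yields, for each separated pair, a bound $C\|f\chi\ci{Q_i}\|\ci{L^2(\mu)}\|g\chi\ci{Q_j}\|\ci{L^2(\nu)}$, and summing over all pairs gives $\bigl(\sum_i\|f\chi\ci{Q_i}\|\bigr)\bigl(\sum_j\|g\chi\ci{Q_j}\|\bigr)$, an $\ell^1$ sum of $\ell^2$ quantities that is not controlled by $\|f\|\,\|g\|$. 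You acknowledge this and appeal to "antisymmetry of the kernel" and "nullity of the diagonal" to recombine the sum, but no mechanism is given, and absolute continuity alone does not supply one (note that antisymmetry is of no help for a bilinear form between two \emph{different} measures). As written, the argument does not close.

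The paper avoids this entirely with a different and much slicker device (Lemma \ref{l-mix1}): multiply $f$ and $g$ by the interleaved $1$-periodic cutoffs $h_1(nt)$ and $h_2(ns)$ supported on $[0,1/2-\delta]$ and $[1/2,1-\delta]$ modulo $1$. For every $n$ the resulting $f_n$, $g_n$ have separated supports, so \eqref{3.1} applies; as $n\to\infty$ they converge \emph{weakly} to $(1/2-\delta)f$ and $(1/2-\delta)g$ while $\|f_n\|^2\to(1/2-\delta)\|f\|^2$, and compactness of $T_\e$ on bounded intervals turns the weak convergence into convergence of $(T_\e f_n,g_n)$. Letting $\delta\to0$ gives the full estimate with only a factor $2$ loss — hence the constant $4C$. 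Note also that the paper's logical order is the reverse of yours: it first bounds the smooth kernels $T_\e$ (to which the compactness lemma applies directly), and only then obtains the truncated operators $\widetilde T_\e$ by majorizing the difference kernel $K_\e$ by averaging operators $|I|^{-1}\chi\ci I$ and invoking the $A_2$-type necessary condition $|I|^{-2}\mu(I)\nu(I)\le C'$ of Lemma \ref{measures}. Attacking the sharply truncated form first, as you propose, is the harder route precisely because of the near-diagonal blocks you could not sum.
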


\begin{rem*}
By a well-known Aronszajn--Donoghue theorem the singular parts of $\mu_\alpha$ and $\mu_\beta$ are mutually singular for all $\alpha, \beta \in\R$ with $\alpha \neq \beta$, see e.g.~\cite{SIMREV}, so the above theorem can be used in the situation when $\mu$ is the spectral measure of $A$ and $\nu=\mu_\alpha$ is the spectral measure of $A_\alpha$. 

On the other hand, it is not hard to show that the uniform boundedness of $T_\e$ implies that $\mu\ti s\perp\nu\ti s$, so Theorem  \ref{reg-2} gives a different proof of this Aronszajn--Donoghue theorem. 
\end{rem*}

\begin{proof}[Proof of Theorem \ref{reg-1} for $T_\e$]
Estimate \eqref{3.1} holds, if we replace function $f$ by $e^{-iat}f(t)$ and $g$ by $e^{-ias} g(s)$, $a\in \R$. So for all $a\in \R$
\begin{equation}
\label{SIO-2}
\left| \iint f(t)\overline{g(s)}\,\, \frac{1-e^{ia(s-t)}}{s-t} \,d\mu(t)d\nu(s) \right| \le 2 C  
\|f\|\ci{L^2(\mu)} \|g\|\ci{L^2(\nu)}
\end{equation}
(again for $f$ and $g$ with separated compact supports). 

Since for $\e>0$
$$
\e\int_{0}^\infty \frac{1-e^{ia(s-t)}}{s-t}\, e^{-\e a} da = \frac{1}{s-t+i\e} 
$$
and $\e\int_0^\infty e^{-\e a } \,da = 1$, we get  by averaging \eqref{SIO-2} over all $a\ge0$ with weight $\e e^{-\e a}$ that 
\begin{equation}
\label{SIO-3}
\left| \iint \frac{f(t)\overline{g(s)}}{s-t+i\e} \, d\mu(t)d\nu(s) \right| \le 2 C\|f\|\ci{L^2(\mu)} \|g\|\ci{L^2(\nu)}
\end{equation}
independent of $\e$. The lemma below shows that the estimate holds for arbitrary compactly supported functions, not necessarily with separated supports, which proves the theorem for $T_\e$. 
\end{proof}
 
\begin{lem}
\label{l-mix1}
Let $\mu$ and $\nu$ be Radon measures such that $\mu\ti{s}\perp\nu\ti{s}$. Let $T:L^2(\mu) \to L^2(\nu)$ be compact. 

If $|(Tf, g)|\le C\|f\|\ci{L^2(\mu)}\|g\|\ci{L^2(\nu)}$ for all pairs $f\in L^2(\mu)$ and $g\in L^2(\nu)$ with separated compact supports, then $\|T\|\le 2C$. 
\end{lem}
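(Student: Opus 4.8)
The plan is to bound the full bilinear form $(Tf,g)$ for compactly supported $f,g$ by splitting the product measure $\mu\times\nu$ near the diagonal (where the separated-support hypothesis fails) from the part away from the diagonal (where it applies). The key structural input is $\mu\ti{s}\perp\nu\ti{s}$: the only obstruction to applying the hypothesis \eqref{3.1}-type bound is the diagonal $\{s=t\}$, and the contribution of the diagonal to $\mu\times\nu$ comes only from the common absolutely continuous behaviour, since the singular parts live on disjoint sets. First I would fix compactly supported $f\in L^2(\mu)$, $g\in L^2(\nu)$ and, using $\mu\ti{s}\perp\nu\ti{s}$, choose disjoint Borel sets carrying the singular parts so that, after a small enlargement, I can partition the common support into finitely many intervals $I_j$ of small length on which $f$ and $g$ are ``almost'' supported in a controlled way.

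Next I would decompose $f=\sum_j f\chi\ci{I_j}$ and $g=\sum_j g\chi\ci{I_k}$ and write $(Tf,g)=\sum_{j,k}(T(f\chi\ci{I_j}),g\chi\ci{I_k})$. For the off-diagonal terms $|j-k|$ large, the supports are separated, so the hypothesis gives $|(T(f\chi\ci{I_j}),g\chi\ci{I_k})|\le C\|f\chi\ci{I_j}\|\ci{L^2(\mu)}\|g\chi\ci{I_k}\|\ci{L^2(\nu)}$; summing these via Cauchy--Schwarz (or more carefully via a Schur-type/almost-orthogonality estimate using the decay of the separated pieces) contributes at most a fixed multiple of $C\|f\|\ci{L^2(\mu)}\|g\|\ci{L^2(\nu)}$. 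The diagonal and near-diagonal blocks are where the compactness of $T$ enters: here I would exploit that $\mu\ti{s}\perp\nu\ti{s}$ forces $\mu\times\nu$ to put no mass on the diagonal away from the absolutely continuous parts, and use a limiting argument. Concretely, as the partition is refined the diagonal blocks $(T(f\chi\ci{I_j}),g\chi\ci{I_j})$ should tend to zero: compactness of $T$ lets me approximate $f$ by finitely many pieces and pass the bound through, so that the near-diagonal sum vanishes in the limit and only the separated-support estimate survives.

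The factor $2$ in $\|T\|\le 2C$ (rather than $C$) suggests the bookkeeping yields one copy of $C$ from the ``upper-triangular'' separated sum and one from the ``lower-triangular'' one, with the diagonal handled by a vanishing-in-the-limit argument rather than a fresh constant. I expect the main obstacle to be exactly this diagonal control: making rigorous that the near-diagonal contribution can be discarded requires combining $\mu\ti{s}\perp\nu\ti{s}$ (so the singular parts never meet on the diagonal) with the compactness of $T$ (so that the finite-rank tails are uniformly small), and organizing the intervals $I_j$ so that every retained pairing genuinely has separated supports. Once the diagonal is shown to be negligible, the remaining estimate is a routine almost-orthogonality summation, and I would finish by taking a supremum over all compactly supported $f,g$ of norm one to conclude $\|T\|\le 2C$.
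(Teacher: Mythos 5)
Your plan has a genuine gap at its core step: the off-diagonal (separated-support) summation cannot be closed. The hypothesis gives $|(T(f\chi_{I_j}),g\chi_{I_k})|\le C\|f\chi_{I_j}\|\ci{L^2(\mu)}\|g\chi_{I_k}\|\ci{L^2(\nu)}$ for each separated pair, but it carries \emph{no decay in the separation} and no sign or orthogonality information between blocks, so there is nothing for a Schur or almost-orthogonality argument to exploit. Summing these bounds over off-diagonal pairs yields
\[
C\Bigl(\sum_j\|f\chi_{I_j}\|\ci{L^2(\mu)}\Bigr)\Bigl(\sum_k\|g\chi_{I_k}\|\ci{L^2(\nu)}\Bigr),
\]
and these $\ell^1$ sums of block norms can exceed $\|f\|\ci{L^2(\mu)}\|g\|\ci{L^2(\nu)}$ by a factor of order the number of blocks, which blows up precisely as you refine the partition. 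Regrouping into upper/lower triangular sums does not help: $\sum_{j<k-1}(T(f\chi_{I_j}),g\chi_{I_k})=\sum_k(T(f\chi_{J_k}),g\chi_{I_k})$ with $J_k=\cup_{j<k-1}I_j$ still forces you to add up $C\|f\|\sum_k\|g\chi_{I_k}\|$, the same $\ell^1$ obstruction. Your guess that the factor $2$ comes from two triangular sums is therefore not realizable from the stated hypothesis. The diagonal step is also unproved as stated --- Cauchy--Schwarz only gives $\|T\|\,\|f\|\,\|g\|$ for the diagonal sum, with no smallness --- but that is secondary to the summation problem.

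The paper's proof avoids any block decomposition. Fix $\delta>0$, let $h_1=\mathbf 1_{[0,1/2-\delta]}$ and $h_2=\mathbf 1_{[1/2,1-\delta]}$ extended $1$-periodically, and set $f_n(t)=f(t)h_1(nt)$, $g_n(s)=g(s)h_2(ns)$: for each $n$ this is a \emph{single} pair of functions with genuinely separated supports, to which the hypothesis applies directly. The equidistribution fact $\int\phi(t)h_i(nt)\,dt\to(1/2-\delta)\int\phi\,dt$ gives $f_n\to(1/2-\delta)f$ weakly with $\|f_n\|^2\to(1/2-\delta)\|f\|^2$ (likewise for $g_n$), and compactness of $T$ upgrades weak convergence to $(Tf_n,g_n)\to(1/2-\delta)^2(Tf,g)$; comparing with $C\|f_n\|\,\|g_n\|\to(1/2-\delta)C\|f\|\,\|g\|$ yields $\|T\|\le(1/2-\delta)^{-1}C$, and the factor $2$ is just $\lim_{\delta\to0}(1/2-\delta)^{-1}$. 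The mutual singularity $\mu\ti{s}\perp\nu\ti{s}$ enters only in the general-measure case, where the singular parts of $f$ and $g$ are restricted to disjoint compact sets (hence automatically separated) and simply scaled by $1/2-\delta$. If you want to salvage your approach, you would need to replace the block sum by a single separated test pair of this interlaced type; as written, the argument does not close.
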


If one restricts everything to an interval $(-R, R)$, the integral operator with kernel $1/(s-t+i\e)$ is clearly compact. So
 Lemma \ref{l-mix1} gives the estimate $|(T_\e f, g)|\le 4C \|f\|\ci{L^2(\mu)}\|g\|\ci{L^2(\nu)}$ for compactly supported $f$ and $g$, which is all we need to prove Theorem \ref{reg-1}. 

\begin{proof}[Proof of Lemma \ref{l-mix1}]
Consider first the case when $\mu$ and $\nu$ are absolutely continuous with respect to Lebesgue measure.  Pick small $\delta>0$, define functions  $h_1= \mathbf{1}_{[0, 1/2-\delta]}$, $h_2= \mathbf 1_{[1/2, 1-\delta]}$ on $[0,1)$ and extend them to $1$-periodic functions on the whole real line.

For $f\in L^2(\mu)$ and $g\in L^2(\nu)$, define 
functions $f_n$, $g_n$ by
$$
f_n(t): = f(t) h_1(nt), \qquad g_n(s): = g(s) h_2(ns). 
$$
For each $n$, the functions $f_n$, $g_n$ have separated support. 
We claim that 
\begin{equation}
\label{mix1}
f_n \to (1/2-\delta) f, \qquad g_n\to (1/2-\delta) g
\end{equation}
weakly in $L^2(\mu)$ and $L^2(\nu)$, respectively, and that 
\begin{equation}
\label{mix2}
\|f_n\|^2\ci{L^2(\mu)} \to (1/2-\delta) \|f\|^2\ci{L^2(\mu)}, \quad
\|g_n\|^2\ci{L^2(\nu)} \to (1/2-\delta) \|g\|^2\ci{L^2(\nu)}
\end{equation}
as $n\to \infty$. 

Both statements follow immediately from the fact that for arbitrary $\phi\in L^1$ (with respect to the Lebesgue measure) and for  $h=h_1$ or $h=h_2$
$$
\lim_{n\to\infty} \int_\R \phi(t) h(nt) dt = (1/2-\delta) \int_\R \phi(t) dt. 
$$
This fact is trivial for characteristic functions of intervals, extends by linearity for their finite linear combinations and from this dense set to all $L^1$ by $\e/3$ Theorem, since the functionals $\phi \mapsto \int_\R \phi(t)h(nt) dt$ are uniformly bounded. 

Since $T$ is compact, the weak convergence of $f_n$ and $g_n$ implies that $(Tf_n, g_n)\to (1/2-\delta)^{2}(Tf,g)$. Therefore
\begin{align*}
(1/2-\delta)^{2}|(Tf, g)|= \lim_{n\to\infty} |(Tf_n,g_n)|& \le C\lim_{n\to\infty}   \|f_n\|\ci{L^2(\mu)}\|g_n\|\ci{L^2(\nu)}  
\\ & = (1/2-\delta)C\|f\|\ci{L^2(\mu)}\|g\|\ci{L^2(\nu)}, 
\end{align*}
so $\|T\|\le (1/2-\delta)^{-1} C$. Since $\delta$ can be arbitrary small, the conclusion of the lemma follows. 

The reasoning in the above paragraph works for general measures. So to prove lemma for the general case it is sufficient for arbitrary $f\in L^2(\mu)$, $g\in L^2(\nu)$ to construct functions $f_n$, $g_n$ satisfying \eqref{mix1}, \eqref{mix2} and such that for each $n$ the supports of $f_n$ and $g_n$ are separated.  

Let $E$ and $F$ be disjoint Borel subsets of Lebesgue measure zero supporting singular parts of $\mu$ and $\nu$, respectively, meaning that $\mu\ti{s}(E^c)=0$, $\nu\ti{s}(F^c)=0$. Denote $G:= (E\cup F)^c$.

Since Radon measures on $\R$ are inner regular, there exist compact subsets $E_n\subset E$, $F_n\subset F$, $G_n\subset G$ such that $\mu(E_n)\to \mu(E)$, $\nu(F_n)\to \nu(F)$, $\mu(G_n) + \nu(G_n) \to \mu(G)+\nu(G)$ as $n\to \infty$. 

Let $f_a= f\chi\ci G$, $g_a=g\chi\ci G$ be ``absolutely continuous'' parts of $f$ and $g$, and $f\ti{s} = f\chi\ci E$, $g\ti{s}= g\chi\ci F$ be the ``singular'' parts of $f$ and $g$. Take $\delta>0$ and define
\begin{align*}
f_n(t) &:= f_a(t) h_1(nt) \chi\ci {G_n}(t) + (1/2-\delta) f\ti{s} (t)\chi\ci {E_n}(t), \\
g_n(t) &:= g_a(t) h_2(nt) \chi\ci {G_n}(t) + (1/2-\delta) g\ti{s}(t) \chi\ci {F_n} (t).
\end{align*}
Clearly, for each $n$ supports of $f_n$ and $g_n$ are separated.

Let us show that $f_n\to (1/2-\delta)f$ weakly in $L^2(\mu)$. Clearly, due to absolute continuity of integral $\|f\ti{s}\chi\ci {E_n}-f\ti{s}\|\ci{L^2(\mu)}\to 0$ as $n\to \infty$. 

Take arbitrary $k\in L^2(\mu)$. Then 
$$
\int_\R f_a(t) h_1(nt) \chi\ci {G_n}(t) \overline{k(t)} \,d\mu(t) \to (1/2-\delta) (f_a, k)\ci{L^2(\mu)}
$$
because, as it was discussed above $f_a(t) h_1(nt)$ converges weakly to $(1/2-\delta) f_a$, and trivially $k\chi\ci{G_n}$ converges strongly to $k\chi\ci G$. 

As for the norms, it is not hard to show that 
$$
\lim_{n\to\infty} \|f_n\|^2\ci{L^2(\mu)} = (1/2-\delta)\| f_a\|\ci{L^2(\mu)}^2 + (1/2-\delta)^2 \|f\ti{s}\|\ci{L^2(\mu)}^2 \le (1/2-\delta)\|f\|\ci{L^2(\mu)}^2.
$$

Similarly $g_n\to (1/2-\delta)g$ weakly in $L^2(\nu)$ and $\lim_n \|g_n\|^2\ci{L^2(\nu)} \le (1/2-\delta) \|g\|^2\ci{L^2(\nu)}$. 

And the same reasoning as for the absolutely continuous case completes the proof.
\end{proof}

In order to show Theorem \ref{reg-1} for $\widetilde T_\e$, we prove the necessity of an $A_2$-type condition for $T_\e$ to be uniformly bounded.

\begin{lem}\label{measures}
Let $\mu$ and $\nu$ be Radon measures such that the operators $T_\e:L^2(\mu)\to L^2(\nu)$ are uniformly bounded.  Then there exists a constant $C>0$ such that 
\[
\int_\R \frac{(\im a ) \,d\mu(t)}{|t-a|^2} \int_\R \frac{(\im a)\, d\nu(t)}{|t-a|^2} \le C
\]
for all $a$, $\im a>0$. In particular
$|I|^{-2}\mu(I)\nu(I)\le C'<\infty$ for all intervals $I\neq\varnothing$.
\end{lem}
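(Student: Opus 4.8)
The plan is to test the uniform bound on a one-parameter family of resolvent functions adapted to the point $a$, and to read off the two Poisson integrals from the opposite sides of the resulting inequality. Write $y:=\im a>0$ and, for brevity, $P_\mu(a):=\int_\R (\im a)|t-a|^{-2}\,\dd\mu(t)$ and $P_\nu(a):=\int_\R(\im a)|s-a|^{-2}\,\dd\nu(s)$; the assertion is $P_\mu(a)P_\nu(a)\le C$. Set $K:=\sup_{\e>0}\|T_\e\|\ci{L^2(\mu)\to L^2(\nu)}$. Fix $a$ and take as test function $f_a(t):=(t-\bar a)^{-1}$. Since $|t-\bar a|=|t-a|$ for real $t$, we have $\|f_a\|\ci{L^2(\mu)}^2=\int_\R|t-a|^{-2}\,\dd\mu(t)=P_\mu(a)/y$, so the a priori bound reads $\|T_\e f_a\|\ci{L^2(\nu)}^2\le K^2 P_\mu(a)/y$. (We may assume $P_\mu(a)<\infty$, so that $f_a\in L^2(\mu)$; otherwise replace $\mu$ by its restriction to $[-R,R]$, run the argument, and let $R\to\infty$ by monotone convergence — this also forces $P_\nu(a)<\infty$.)

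The next step is to compute $T_\e f_a$ in closed form. Partial fractions in $t$ give
\[
\frac{1}{(s-t+i\e)(t-\bar a)}=\frac{1}{s+i\e-\bar a}\left[\frac{1}{s-t+i\e}+\frac{1}{t-\bar a}\right],
\]
so that, integrating against $\dd\mu(t)$,
\[
T_\e f_a(s)=\frac{1}{s+i\e-\bar a}\bigl[C_\e(s)+F_\mu(\bar a)\bigr],\qquad C_\e(s):=\int_\R\frac{\dd\mu(t)}{s-t+i\e},\quad F_\mu(\bar a):=\int_\R\frac{\dd\mu(t)}{t-\bar a}.
\]

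The heart of the argument is a pointwise lower bound for the bracket, uniform in $s$ and $\e$. Both summands have nonpositive imaginary part: $\im C_\e(s)=-\int_\R \e\,|s-t+i\e|^{-2}\,\dd\mu(t)\le0$, while $\im F_\mu(\bar a)=-\int_\R y\,|t-a|^{-2}\,\dd\mu(t)=-P_\mu(a)$. Hence $\im[C_\e(s)+F_\mu(\bar a)]\le -P_\mu(a)$, and therefore $|C_\e(s)+F_\mu(\bar a)|\ge P_\mu(a)$ for every $s$ and every $\e>0$. This is the step I expect to be the crux: the two imaginary parts reinforce rather than cancel, which is precisely what converts the resolvent of $\mu$ into its Poisson integral. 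Feeding this into the formula for $T_\e f_a$ and using $|s+i\e-\bar a|^2=(s-\re a)^2+(y+\e)^2$ yields
\[
\|T_\e f_a\|\ci{L^2(\nu)}^2\ \ge\ P_\mu(a)^2\int_\R\frac{\dd\nu(s)}{(s-\re a)^2+(y+\e)^2}.
\]
As $\e\to0^+$ the integrand increases to $|s-a|^{-2}$, so by monotone convergence the right side tends to $P_\mu(a)^2\,P_\nu(a)/y$. Comparing with the a priori bound $\|T_\e f_a\|^2\le K^2P_\mu(a)/y$ and cancelling $P_\mu(a)/y$ gives $P_\mu(a)P_\nu(a)\le K^2$, the claimed inequality with $C=K^2$.

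Finally, the interval statement follows by specializing $a$. Given $I\neq\varnothing$ with centre $\re a=:x_0$ and half-length $y=|I|/2$, put $a=x_0+iy$. For $t\in I$ one has $|t-a|^2=(t-x_0)^2+y^2\le 2y^2$, whence $P_\mu(a)\ge\int_I y\,|t-a|^{-2}\,\dd\mu(t)\ge\mu(I)/(2y)=\mu(I)/|I|$, and likewise $P_\nu(a)\ge\nu(I)/|I|$. Thus $|I|^{-2}\mu(I)\nu(I)\le P_\mu(a)P_\nu(a)\le K^2=:C'$.
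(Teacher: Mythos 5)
Your proof is correct, but it takes a genuinely different route from the paper. You test the uniform bound on the single function $f_a(t)=(t-\bar a)^{-1}$, split $T_\e f_a$ by partial fractions, and exploit the fact that $\im C_\e(s)\le 0$ and $\im F_\mu(\bar a)=-P_\mu(a)$ have the same sign, so they reinforce and give the pointwise lower bound $|T_\e f_a(s)|\ge P_\mu(a)/|s+i\e-\bar a|$; the product of Poisson integrals then drops out by monotone convergence. The paper instead conjugates by the Blaschke-type factor $b_a(x)=(x-a)/(x-\bar a)$ and observes that $R_\e:=T_\e-M_{\overline{b_a}}T_\e M_{b_a}$ has kernel $2i(\im a)(s-t)\big/\bigl[(s-t+i\e)(s-a)(t-\bar a)\bigr]$, whose weak limit as $\e\to0^+$ is the rank one operator with kernel $2i(\im a)\big/\bigl[(s-a)(t-\overline a)\bigr]$; its operator norm equals its Hilbert--Schmidt norm, which is exactly the square root of the product of the two Poisson integrals. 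The paper's argument is symmetric in $\mu$ and $\nu$, works entirely with compactly supported test functions plus a weak-operator-topology limit, and needs no sign considerations; yours is more elementary and self-contained (no weak limits of operators), yields the clean constant $C=\sup_\e\|T_\e\|^2$, and correctly handles the only delicate point --- the a priori finiteness of $P_\mu(a)$ needed to apply $T_\e$ to $f_a$ and to cancel $P_\mu(a)/y$ at the end --- via restriction to $[-R,R]$ and monotone convergence. The derivation of the interval condition from the Poisson condition is the same in both arguments.
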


\begin{proof}[Proof of Lemma \ref{measures}]
Let $b_a(x)=\frac{x-a}{x-\bar a}$, $a\in\C$. Consider auxiliary operators $R_\e:= T_\e-M_{\overline{b_a}}T_\e M_{b_a}:L^2(\mu)\to L^2(\nu)$, where $M_\f$ is the multiplication operator, $M_\f f = \f f$.  Since $M_b$ and $M_{\overline b}$ are isometries, the operators $R_\e$ are uniformly bounded with respect to $\e$ and $a$.

Since 
\[
\frac{1}{s-t+i\e}-\frac{(s-\bar a)(t-a)}{(s-a)(s-t+i\e)(t-\bar a)}=
\frac{2i\im a(s-t)}{(s-t+i\e)(s-a)(t-\bar a)}, 
\]
we have for compactly supported $f\in L^2(\mu)$
\[
R_\e f\,(s)
=
\int\frac{2i\im a(s-t)f(t)}{(s-t+i\e)(s-a)(t-\bar a)}\, d\mu(t). 
\]

It follows from the Dominated Convergence Theorem that for compactly supported $f\in L^2(\mu)$, $g\in L^2(\nu)$
\[
\lim_{\e\to 0^+} (R_\e f, g)\ci{L^2(\nu)} = \iint \frac{ 2i (\im a ) f(t) \overline{g(s)} }{(s-a)(t-\overline a)} \, d\mu(t) d\nu(s), 
\]
so the weak limit $R_0 := \text{w.o.t.-}\lim_{\e\to0^+} R_\e$. Its norm can be easily computed (for example, the operator norm of a rank one operator coincides with its Hilbert--Schmidt norm):
\begin{equation}
\label{Muck-1}
\int_\R \frac{2 \im a}{|t-a|^2 }d\mu(t) \int_\R \frac{2 \im a}{|s-a|^2 }d\nu(s) = \|R_0\|^2\le 4\limsup_{\e\to0^+} \|T_\e\|^2<\infty. 
\end{equation}
But that is exactly the conclusion of the theorem. 

To prove the statement about intervals, 
take a non-empty interval $I$. Set $\im a=|I|$ and $\re a=1/2(\sup I-\inf I)$. Integrating in \eqref{Muck-1} only over $I\times I$  and using that $1/|t-a| \ge 1/(2|I|)$ for $t\in I$ we get that $|I|^{-2}\mu(I)\nu(I)\le C'<\infty$. 
\end{proof}

\begin{proof}[Proof of Theorem \ref{reg-1} for $\widetilde T_\e$]
To prove Theorem \ref{reg-1} for the operators $\widetilde T_\e$ it is sufficient  to show that the difference operators $T_\e-\widetilde T_\e$ are uniformly bounded.


The difference operator is  defined for compactly supported $f\in L^2(\mu)$ by
\[
(T_\e - \widetilde T_\e)f(s)
=\int K_\e (s-t) f(t)d\mu(t)
\]
where $K_\e(x)=(x+i\e)^{-1}-x^{-1}\chi\ci{[-\e,\e]^c}$. Note that the kernel $K_\e$ satisfies $|K_\e(x)|\le \frac{\sqrt{2}\e}{x^2+\e^2}\,$, so it can be majorated by a convex combination of functions $|I|^{-1}\chi\ci I$, 
\[
|K_\e(x) | \le \sum_k c_k(\e)  |I_k|^{-1} \chi\ci{I_k}(x) =: M_\e (x),  \qquad c_k(\e)\ge 0, \ \sum_k c_k(\e) \le C <\infty. 
\]

Clearly
\[
\left| \iint K_\e(s-t) f(t) \overline{g(s)} d\mu(t)d\nu(s) \right| 
\le 
\iint M_\e (s-t) |f(t)| \cdot |g(s)| d\mu(t)d\nu(s). 
\]
So, to prove uniform boundedness of $T_\e-\widetilde T_\e$, it is sufficient to show that the operators 
 $T\ci{I}:L^2(\mu)\to L^2(\nu)$ given by
\[
T\ci{I}f (s)=|I|^{-1}\int \chi\ci{I}(s-t)f(t)d\mu(t)
\]
are uniformly bounded.

%

To prove this uniform estimate  let $\cup\ci{k\in\Z}J_k$ be a cover of $\R$ by non-intersecting half open intervals of length $|J_k|=|I|$. Let $\widetilde J_k:=J_{k-1}\cup J_k\cup J_{k+1}$.

For all $s\in J_k$, we have
\[
T\ci{I} f(s)
\le
3 |\widetilde J_k|^{-1} \int\ci{\widetilde J_k} f d\mu   
\le 
3 \Bigl( |\wt J_k|^{-1} \int\ci{\widetilde J_k} |f|^2 d\mu  \Bigr)^{1/2} \left( |\wt J_k|^{-1} \mu(\wt J_k) \right)^{1/2}\,.
\]
(The last inequality is just Cauchy--Schwartz.) So we obtain
\[
\int_{J_k} |T\ci I f |^2 d\nu \le 9 |\wt J_k|^{-2} \mu(\wt J_k) \nu(J_k) \int_{\wt J_k} |f|^2 d\mu. 
\]
Summing over all $k$ and taking into account that $|\wt J_k|^{-2} \mu(\wt J_k) \nu(J_k) \le |\wt J_k|^{-2} \mu(\wt J_k) \nu(\wt J_k)\le C'$ and that each $x\in \R$ is covered by $3$ intervals $\wt J_k$, we get
\[
\int_\R |T\ci I f|^2 d\nu \le 27 C' \int_\R |f|^2 d\mu.
\]
\end{proof}

\section{Absence of singular spectrum}\label{absSEC}
In this section we are going to investigate the absence of the singular spectrum of the perturbed operator $A_\alpha$. 


For a complex-valued Borel measure $\eta$ on $\R$ such that $\int \frac{|d \eta(t)|}{1+t^2}< \infty$, let 
\[
K\eta(s) :=\lim_{\e\to 0^+} \int \frac{d \eta(t)}{s-t+i\e}\,.
\]
It is a standard fact that this limit exists  almost everywhere with respect to Lebesgue measure.

We will need the result below about the boundary values of the Cauchy transform of a measure,  
cf \cite{GOLU}, where it was proved for the case of the unit circle.  The case of the real line can be treated absolutely the same way. 
\begin{theo}
Let $I\subset \R$ be a bounded open interval.
Then 
\[
t \chi\ci{(\{|K\eta|>t\}\cap I)} \, dx\,\,\, \to\,\,\, 2 \chi\ci{I} \,d |\eta\ti{s}| + \chi\ci{\partial I}  \,d |\eta\ti{s}|
\]
in the weak$\,^*$-sense as $t\to \infty$.
\end{theo}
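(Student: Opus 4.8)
The plan is to read the statement as a localized, measure-valued refinement of the classical Boole--Stein--Weiss equality for the distribution function of a Cauchy transform, and to establish it in three stages: first discarding the absolutely continuous part, then extracting both the constant and the spatial picture from the discrete case, and finally localizing near $\supp\eta\ti{s}$ and correcting at the two endpoints of $I$.

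First I would split $\eta=\eta\ti{ac}+\eta\ti{s}$ and show that the absolutely continuous part is invisible in the limit, i.e. that $t\,|\{|K\eta\ti{ac}|>t\}|\to0$. Writing $\eta\ti{ac}=g\,dx$ and decomposing $g=g_0+h$ with $g_0\in C_0^\infty$ and $\|h\|\ci{L^1}<\e$, the transform $Kg_0$ is bounded, so $\{|Kg_0|>t\}$ is empty for large $t$, while the classical weak type $(1,1)$ bound for the Cauchy transform gives $t\,|\{|Kh|>t\}|\le C\e$. Using the level splitting $\{|K\eta|>t\}\subset\{|K\eta\ti{s}|>(1-\delta)t\}\cup\{|K\eta\ti{ac}|>\delta t\}$ and letting $t\to\infty$ and then $\delta\to0$, the $\eta\ti{ac}$ term drops out without affecting the constant, and the problem reduces to $\eta=\eta\ti{s}$.

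Next I would isolate the mechanism in the discrete case. For $\sigma=\sum_j c_j\delta_{x_j}$ one has $K\sigma(x)=\sum_j c_j/(x-x_j)$, and near each pole the set $\{|K\sigma|>t\}$ is, for large $t$, the \emph{symmetric} interval $\{|x-x_j|<|c_j|/t\}$ of length $2|c_j|/t$; since distinct poles decouple as $t\to\infty$, this produces both the global constant $t\,|\{|K\sigma|>t\}|\to2\sum_j|c_j|=2\|\sigma\|$ (for real positive $\sigma$ this is Boole's exact equality applied separately to $\{K\sigma>t\}$ and $\{K\sigma<-t\}$) and the stronger \emph{spatial} statement $t\,\chi\ci{\{|K\sigma|>t\}}\,dx\to2|\sigma|$ weak$^*$, with each atom's mass placed symmetrically about it. This symmetry is precisely what will yield the factor $2$ at interior points and the factor $1$ at endpoints. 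To reach a general singular $\eta\ti{s}$ I would localize: on a compact $L\Subset U$ the far part $\eta\ti{s}|\ci{U^c}$ has a Cauchy transform that is bounded on $L$, so on $L$ one may replace $|K\eta\ti{s}|>t$ by $|K(\eta\ti{s}|\ci U)|>t\pm M$, harmless to the leading asymptotics; this confines the level-set measure to any neighborhood of $\supp\eta\ti{s}$ and reduces matters to a compactly supported singular measure. Approximating that measure in total variation by discrete $\sigma_n$, applying the exact discrete result, and controlling the symmetric difference of super-level sets by weak $(1,1)$ applied to $K(\eta\ti{s}-\sigma_n)$, I would obtain, for every bounded open $U$ with $|\eta\ti{s}|(\partial U)=0$, the weak$^*$ convergence $t\,\chi\ci{\{|K\eta|>t\}\cap U}\,dx\to2\,\chi\ci U\,d|\eta\ti{s}|$.

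Finally I would read off the statement for $I$. Since $\partial I$ is two points, $|\eta\ti{s}|(\partial I)$ is carried entirely by possible atoms of $\eta\ti{s}$ at the endpoints; away from these the previous step already supplies the interior term $2\,\chi\ci I\,d|\eta\ti{s}|$. For an atom of mass $m$ at an endpoint $x_0\in\partial I$, the essentially exact symmetric interval $\{|x-x_0|<m/t\}$ meets the open interval $I$ in only one of its two halves, so it contributes mass $t\cdot(m/t)=m$ concentrating at $x_0$; summing over the endpoints gives exactly $\chi\ci{\partial I}\,d|\eta\ti{s}|$. The main obstacle is the passage from the discrete to a general continuous singular measure: perturbing $\eta\ti{s}$ in total variation moves the super-level sets of $K$, and one must control their \emph{spatial} distribution, not merely their total mass, uniformly enough to interchange the limits in $t$ and in the approximation. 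This interchange is the technical heart of the argument and is precisely what is carried out on the unit circle in \cite{GOLU}; the real-line version follows verbatim after the standard Cayley-type transfer.
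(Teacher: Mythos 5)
The paper does not actually prove this statement: it is quoted with a pointer to \cite{GOLU}, where the analogous result is established on the unit circle, together with the remark that the real-line case ``can be treated absolutely the same way.'' So there is no in-paper argument to match yours against; what you have written is a reconstruction of the standard proof behind the citation, and its skeleton is sound. The three ingredients you isolate are indeed the right ones: the weak type $(1,1)$ bound kills the absolutely continuous part and, applied to $K(\eta\ti{s}-\sigma_n)$, controls the \emph{location} (not just the measure) of the discrepancy between superlevel sets when a general singular measure is replaced by a discrete approximant; Boole's exact identity plus the asymptotic symmetry of the level set about each pole produces the factor $2$ at interior points; and the halving of the symmetric interval at a boundary point of the open interval $I$ produces the factor $1$ on $\partial I$ (correctly attributed to atoms only, since $\partial I$ is finite). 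Two points deserve more care than your sketch gives them. First, $\eta$ is only assumed to satisfy $\int(1+t^2)^{-1}|d\eta(t)|<\infty$, so neither $\eta\ti{ac}$ nor $\eta\ti{s}$ need be finite; the decomposition $g=g_0+h$ with $\|h\|\ci{L^1}<\e$ and the total-variation approximation by discrete measures must therefore be preceded by the localization you introduce only in your second stage (split off $\eta|\ci{U^c}$ for a bounded neighborhood $U\supset\clos I$, whose transform is, after the standard additive renormalization of the kernel, bounded on $\clos I$ and hence invisible in the $t\to\infty$ asymptotics). Second, the endpoint computation implicitly uses that the atom at $x_0$ dominates $K\eta$ near $x_0$; this needs the observation that $|\eta\ti{s}-m\delta_{x_0}|\bigl((x_0-r,x_0+r)\bigr)\to0$ as $r\to0$, fed back through the same weak $(1,1)$ comparison. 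Both are routine repairs, and the genuinely delicate step --- interchanging the limit in $t$ with the approximation of $\eta\ti{s}$ while tracking the spatial distribution of the level sets --- is exactly the one you flag and defer to \cite{GOLU}, which is also all the paper itself does.
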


The following corollary is an immediate consequence of this theorem.

\begin{cor}\label{APOL-t}
If $I\subset \R$ is a bounded closed interval such that $\eta\ti{s}|\ci{I}\ne 0$, then there exists a $C>0$ such that $|\{|K\eta|>t\}\cap I|\ge C/t$ for large $t$.
\end{cor}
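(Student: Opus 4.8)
The plan is to derive Corollary \ref{APOL-t} directly from the theorem immediately preceding it, which characterizes the weak$^*$ limit of the measures $t\chi\ci{(\{|K\eta|>t\}\cap I)}\,dx$. First I would note that the corollary concerns a \emph{closed} interval $I$ with $\eta\ti{s}|\ci I\ne 0$, whereas the theorem is stated for a bounded \emph{open} interval; so my first step is to pass to a slightly larger open interval $J\supset I$ chosen so that $\eta\ti s(\partial J)=0$ and $\eta\ti s(J\setminus I)$ is negligible, or alternatively to apply the theorem to an open interval and exploit that the limit measure $2\chi\ci J\,d|\eta\ti s|+\chi\ci{\partial J}\,d|\eta\ti s|$ assigns positive mass to the interior whenever $|\eta\ti s|\ci I\ne 0$.

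The core of the argument is a standard testing against an appropriate continuous function. Since $|\eta\ti s|\ci I\ne 0$, I can pick a nonnegative $\f\in C_0(\R)$ supported in the interior of $J$ with $\f\equiv 1$ on a set carrying positive $|\eta\ti s|$-mass, so that $\int\f\,d|\eta\ti s|>0$. Applying the weak$^*$ convergence from the theorem to the test function $\f$ gives
\[
\lim_{t\to\infty} t\int_{\{|K\eta|>t\}\cap J}\f(x)\,dx \;=\; 2\int_J \f\,d|\eta\ti s| \;+\; \int_{\partial J}\f\,d|\eta\ti s| \;>\;0.
\]
Because $\f\le 1$ and $\f$ is supported in $J$, the left-hand integral is bounded above by $t\,|\{|K\eta|>t\}\cap J|$. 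Hence $t\,|\{|K\eta|>t\}\cap J|$ stays bounded below by a positive constant for all large $t$, which is exactly a lower bound of the form $|\{|K\eta|>t\}\cap J|\ge C/t$.

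The remaining step is to replace $J$ by the original closed interval $I$. Here I would arrange the choice of $J$ and $\f$ so that $\supp\f\subset I$ (possible since $\eta\ti s|\ci I\ne 0$ forces positive mass in the interior of $I$, and $C_0$ functions supported in $I^\circ$ suffice to detect it), so that $\{|K\eta|>t\}\cap\supp\f\subset\{|K\eta|>t\}\cap I$ and the same lower bound $|\{|K\eta|>t\}\cap I|\ge C/t$ follows verbatim. The main obstacle I anticipate is the bookkeeping at the boundary: one must ensure the chosen test function and interval avoid putting the detected mass on $\partial I$ or outside $I$, and confirm that $\eta\ti s$ genuinely charges the interior of $I$ rather than only its endpoints. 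This is handled by the freedom in choosing $\f$ supported strictly inside $I$, together with the observation that if all the singular mass in $I$ sat on $\partial I$ one could shrink to a subinterval still carrying it; once the test function is fixed the inequality is immediate and no delicate estimate is needed.
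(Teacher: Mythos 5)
Your core mechanism --- pairing the weak$^*$ convergence of $t \chi\ci{(\{|K\eta|>t\}\cap J)}\,dx$ with a continuous bump $\f$, $0\le\f\le1$, and bounding the pairing above by $t\,|\{|K\eta|>t\}\cap\supp\f|$ --- is exactly the intended one; the paper gives no written argument beyond calling the corollary an immediate consequence of the preceding theorem. However, your handling of the boundary contains a genuine error. The hypothesis $\eta\ti{s}|\ci{I}\ne 0$ does \emph{not} force $|\eta\ti{s}|(I^\circ)>0$: take $\eta\ti{s}|\ci{I}$ to be a point mass at an endpoint $a$ of $I=[a,b]$. Then no continuous $\f$ with $\supp\f\subset I$ can have $\int\f\,d|\eta\ti{s}|>0$, because a continuous function vanishing off $[a,b]$必 must satisfy $\f(a)=\f(b)=0$; and your proposed remedy --- that if all the singular mass in $I$ sat on $\partial I$ one could ``shrink to a subinterval still carrying it'' --- is backwards: shrinking to a subinterval loses the endpoint mass rather than retaining it. So the final step, where you force $\supp\f\subset I$ after having applied the theorem to an enlargement $J\supset I$, fails precisely in the case that the boundary term of the theorem is designed to cover.

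The fix is to go in the opposite direction: apply the theorem to the \emph{open} interval $I^\circ$ rather than to an enlargement of $I$. The approximating measures $t \chi\ci{(\{|K\eta|>t\}\cap I^\circ)}\,dx$ are then automatically supported in $I$, so you may take $\f$ continuous and compactly supported with $0\le\f\le1$ and $\f\equiv 1$ on all of $I$ (supported in any larger neighborhood). The pairing then equals $t\,|\{|K\eta|>t\}\cap I^\circ|$ exactly, and by the theorem it converges to $2|\eta\ti{s}|(I^\circ)+|\eta\ti{s}|(\partial I)\ge |\eta\ti{s}|(I)>0$; the term $\chi\ci{\partial I}\,d|\eta\ti{s}|$ in the limit is precisely what rescues the endpoint-mass case. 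Hence for large $t$ one gets $|\{|K\eta|>t\}\cap I|\ge |\{|K\eta|>t\}\cap I^\circ|\ge C/t$ with $C=\tfrac12 |\eta\ti{s}|(I)$.
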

Assume the setting of rank one perturbations, see e.g.~Section \ref{setup}. Let 
$$
F(z) = \int_\R \frac{d\mu(x)}{x-z}, \qquad F_\alpha(z) = \int_\R \frac{d\mu_\alpha (x)}{x-z}
$$
where $\im z >0$,  $\mu$ and $\mu_\alpha$ are the spectral measures of $A$ and $A_\alpha$, respectively. 

By the   well-known Aronszajn--Krein formula $F_\alpha=F/(1+\alpha F)$. It is also well-known  that $\im K\nu =\lim_{\e\to 0^+} \im F(x+i\e) = w(x)$ a.e.~with respect to Lebesgue measure ($w$ is the density of the absolutely continuous part of $\mu$) and similarly for $F_\alpha$. 

Since 
\[
\im F_\alpha = \im \frac{f}{1+\alpha F} = \frac{\im F}{|1+\alpha F|^2} 
\le \frac1{|\alpha|^2 } \frac1{\im F}, 
\]
we can conclude that the density $w_\alpha$ of the absolutely continuous part of $\mu_\alpha$ satisfies 
\[
w_\alpha \le \frac1{\pi^2|\alpha|^2 } \frac1{w}\,.
\]
Combining this with Corollary \ref{APOL-t} one immediately get the following proposition.

\begin{prop}
Let for a bounded closed interval $I$
\[
\left| \{ x\in I: 1/w(x) > t\}\right| = o(1/t), \qquad t\to +\infty. 
\]
Then for all $\alpha \in \R$, $\alpha \ne 0$ the measures $\mu_\alpha$ do not have singular part on $  I $. 
\end{prop}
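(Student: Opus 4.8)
The plan is to argue by contradiction. Suppose that for some $\alpha\neq 0$ the singular part of $\mu_\alpha$ does not vanish on $I$, i.e.~$(\mu_\alpha)\ti{s}|\ci I\neq 0$. Applying Corollary \ref{APOL-t} to the measure $\eta=\mu_\alpha$ then produces a constant $C>0$ with
\[
\bigl|\{\,s\in I:\ |K\mu_\alpha(s)|>t\,\}\bigr|\ge \frac{C}{t}\qquad\text{for all large }t.
\]
The goal is to contradict this lower bound using the hypothesis on $w$, by showing that $|K\mu_\alpha|$ can be large only where $w$ is small.

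The heart of the argument is this pointwise estimate. First I would unwind the definition of $K$: since $\frac{1}{s-t+i\e}=-\frac{1}{t-s-i\e}$, one has $K\mu_\alpha=-F_\alpha(\fdot+i0^+)$ a.e., so $|K\mu_\alpha|=|F_\alpha(\fdot+i0^+)|$. Next I would feed in the Aronszajn--Krein formula $F_\alpha=F/(1+\alpha F)$, rewritten as $F_\alpha=\alpha^{-1}-\bigl(\alpha(1+\alpha F)\bigr)^{-1}$. From this form $|F_\alpha|>t$ forces $|1+\alpha F|^{-1}>|\alpha|(t-|\alpha|^{-1})$, hence $|1+\alpha F|<2/(|\alpha|t)$ once $t>2/|\alpha|$. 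Since $|1+\alpha F|\ge|\im(1+\alpha F)|=|\alpha|\,\im F$ and $\im F(x+i0^+)=\pi w(x)$ a.e., this yields $\pi w<2/(\alpha^2 t)$, that is,
\[
\{\,|K\mu_\alpha|>t\,\}\ \subseteq\ \bigl\{\,1/w>c\,t\,\bigr\}\quad\text{up to a Lebesgue-null set,}\qquad c:=\tfrac{\pi\alpha^2}{2}>0 .
\]

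Combining the two displays would give, for all large $t$,
\[
\frac{C}{t}\ \le\ \bigl|\{\,|K\mu_\alpha|>t\,\}\cap I\bigr|\ \le\ \bigl|\{\,1/w>ct\,\}\cap I\bigr| .
\]
By hypothesis $\bigl|\{x\in I:1/w(x)>s\}\bigr|=o(1/s)$ as $s\to+\infty$; taking $s=ct$ makes the right-hand side $o(1/t)$, which is incompatible with the lower bound $C/t$ for large $t$. This contradiction forces $(\mu_\alpha)\ti{s}|\ci I=0$, proving the proposition.

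I expect the only delicate point to be the bookkeeping of the almost-everywhere boundary-value identities, namely that $K\mu_\alpha=-F_\alpha(\fdot+i0^+)$ and $\im F(\fdot+i0^+)=\pi w$ hold Lebesgue-a.e., so that the set inclusion above is valid up to null sets and the Lebesgue-measure comparison is legitimate. The algebraic extraction of $|1+\alpha F|<2/(|\alpha|t)$ and the correct normalization of the $\pi$-factor relating $\im F$ to $w$ are then routine.
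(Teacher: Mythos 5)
Your proof is correct and takes essentially the same route as the paper: the Aronszajn--Krein formula together with the lower bound $|1+\alpha F|\ge|\alpha|\,\im F=|\alpha|\pi w$, fed into Corollary \ref{APOL-t} applied to $\eta=\mu_\alpha$. You in fact make explicit the one step the paper compresses into ``immediately'': Corollary \ref{APOL-t} requires control of the full modulus $|K\mu_\alpha|=|F_\alpha(\fdot+i0^+)|$ rather than only $\im F_\alpha=\pi w_\alpha$, and your inclusion $\{|K\mu_\alpha|>t\}\subseteq\{1/w>ct\}$ (up to null sets) supplies exactly that.
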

The above reasoning is probably well-known to specialists. We have learned it from E.~Abakumov (personal communication). 

Using the fact about uniform (in $\e$ not in $\alpha$) boundedness of the operators $T_\e=(T_\mu)_\e: L^2 (\mu)\to L^2(\mu_\alpha)$ we can get a  stronger result in this direction. 

For a bounded  interval $I$ and a weight $w$, define the \emph{distribution function} $D_w =D_{w, I}(t):=|\{x\in I: w(x) <t\}|$ of $w|_I$. Consider its inverse function, the \emph{increasing rearrangement} $w^*=w^*\ci I$ of $w|\ci I$, i.e.~$w^*=(D_{w})^{-1}$.

\begin{lem}\label{eqinfty}
Let $\mu$ and $\nu$ be Radon measures on $\R$ such that the operators $T_\e=(T_\mu)_\e: L^2 (\mu)\to L^2(\nu)$, 
$$
T_\e f (s) = \int_\R \frac{f(t)}{s-t + i\e} d\mu(t),
$$
are uniformly (in $\e$) bounded, and let $d\mu= wdt + d\mu\ti s$ be the Lebesgue decomposition of the  measure $\mu$. Assume that for a bounded closed interval $I$ the increasing rearrangement $w^* = w^*_I$ of $w|\ci I$ satisfies
\begin{align}\label{aintegral}
\int_0^\e x^{-2} w^*(x) \, dx =\infty
\end{align}
for some (all) $\e>0$. 

Then the measure $\nu$ does not have the singular part on $I$.
\end{lem}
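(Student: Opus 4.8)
The plan is to argue by contradiction: assume $\nu$ has a nontrivial singular part on $I$, that is $\nu\ti s|\ci I\neq 0$, and deduce that $\int_0^\e x^{-2}w^*(x)\,dx<\infty$, contradicting the hypothesis. The mechanism rests on three ingredients: (i) the distribution-function lower bound for the Cauchy transform of a measure carrying a singular part, Corollary \ref{APOL-t}; (ii) the assumed uniform boundedness of $T_\e$, hence of the adjoints $T_\e^*:L^2(\nu)\to L^2(\mu)$, which forces $K(\nu|\ci{I_0})$ to lie in $L^2(\mu)$; and (iii) a layer-cake/rearrangement computation turning these two facts into a finite bound for $\int x^{-2}w^*$.

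First I would localize. Choose a bounded closed interval $I_0$ containing $I$ in its interior, so that $\nu(I_0)<\infty$ and, since $\dist(I,I_0^c)>0$, the function $K(\nu|\ci{I_0})$ differs from $K\nu$ on $I$ only by a term that is bounded on $I$. Corollary \ref{APOL-t} then supplies a constant $C>0$ with $|E_t|\ge C/t$ for all large $t$, where $E_t:=\{x\in I:\ |K(\nu|\ci{I_0})(x)|>t\}$. For the upper bound, I test the adjoint on $g=\chi\ci{I_0}$: the functions $T_\e^*\chi\ci{I_0}(t)=\int_{I_0}(s-t-i\e)^{-1}\,d\nu(s)$ converge a.e.\ with respect to Lebesgue measure to $-K(\nu|\ci{I_0})$, while $\|T_\e^*\chi\ci{I_0}\|\ci{L^2(\mu)}\le C_0\,\nu(I_0)^{1/2}$ uniformly in $\e$, where $C_0$ bounds the $T_\e$. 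Applying Fatou's lemma to the absolutely continuous part $w\,dx$ of $\mu$ gives
\[
\int_I |K(\nu|\ci{I_0})|^2\, w\,dx \le C_0^2\,\nu(I_0)=:A<\infty .
\]
Writing $\phi(t):=\int_{E_t} w\,dx$, the layer-cake formula yields $\int_0^\infty 2t\,\phi(t)\,dt \le \int_I |K(\nu|\ci{I_0})|^2 w\,dx \le A$.

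Finally I would convert this into the rearrangement integral. Set $W(m):=\int_0^m w^*(u)\,du$, which equals $\min\{\int_S w\,dx:\ |S|=m\}$ since $w^*$ is the increasing rearrangement. Because $|E_t|\ge C/t$, taking $S\subset E_t$ with $|S|=C/t$ gives $W(C/t)\le \phi(t)$; substituting $m=C/t$ in the layer-cake bound yields $\int_0^{m_1} W(m)\,m^{-3}\,dm<\infty$ for some $m_1>0$. A Fubini computation gives the identity
\[
\int_0^\e u^{-2}w^*(u)\,du = 2\int_0^\e x^{-3}W(x)\,dx + \frac{W(\e)}{\e^2},
\]
valid for $\e\le m_1$, whose right-hand side is now finite. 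This contradicts $\int_0^\e x^{-2}w^*(x)\,dx=\infty$, so $\nu\ti s|\ci I=0$.

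The step needing the most care is (ii): one must justify that the regularized adjoints $T_\e^*\chi\ci{I_0}$ converge a.e.\ to the Cauchy transform $K(\nu|\ci{I_0})$, so that Corollary \ref{APOL-t} and Fatou's lemma may be combined, and one must track the localization error coming from $\nu|\ci{I_0^c}$. Conceptually, the genuine obstacle is that the mere presence of a singular part yields only the \emph{borderline} scale-by-scale estimate $\int_{E_t} w\lesssim t^{-2}$, which is entirely compatible with divergence of $\int x^{-2}w^*$; the point of the proof is that one instead has the full $L^2(\mu)$ bound on $K(\nu|\ci{I_0})$, i.e.\ one may sum the distributional information over all levels $t$ through the layer-cake formula, and this integrated estimate is exactly sharp.
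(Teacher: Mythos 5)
Your proof is correct and follows essentially the same route as the paper's: argue by contradiction, localize $\nu$, invoke Corollary \ref{APOL-t} for the level-set lower bound, use the uniform boundedness of the adjoints $T_\e^*$ tested on a characteristic function to place the Cauchy transform of the localized measure in $L^2(w\,dx)$, and finish with the layer-cake/rearrangement computation. The only cosmetic difference is that you pass to the limit via Fatou's lemma on the a.e.\ convergent regularizations, whereas the paper extracts a weak-operator-topology limit and identifies it with the pointwise limit via Lemma \ref{wkeqae}; both are valid, and your version is, if anything, slightly leaner.
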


\begin{lem}
Condition \eqref{aintegral} can equivalently be expressed in terms of the distribution function $D_w = D_{w, I}$ as 
\begin{equation}
\label{aint-1}
\int_0^\delta \frac1{D_w(y)} dy = \infty. 
\end{equation}
\end{lem}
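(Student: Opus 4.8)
The plan is to prove the sharper statement that the two integrals in \eqref{aintegral} and \eqref{aint-1} differ only by a \emph{finite} additive constant, so that one diverges precisely when the other does. The natural tool is the layer-cake (distribution function) representation of the increasing rearrangement combined with Tonelli's theorem: this converts the integral against $x^{-2}\,dx$ directly into an integral of $1/D_w$, without ever differentiating $D_w$ or inverting it explicitly.

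First I would record the defining relation between $w^*$ and $D_w$ as generalized inverses of one another. Since $w^*=(D_w)^{-1}$ and $D_w$ is non-decreasing, one has $w^*(x)>y \iff D_w(y)<x$. Writing $w^*(x)=\int_0^\infty \chi\ci{\{w^*(x)>y\}}\,dy$ and substituting this relation gives
\[
\int_0^\e x^{-2} w^*(x)\,dx = \int_0^\e x^{-2}\int_0^\infty \chi\ci{\{D_w(y)<x\}}\,dy\,dx .
\]
As the integrand is non-negative, Tonelli's theorem permits interchanging the order of integration. For fixed $y$ the inner integral over $x$ is $\int_{D_w(y)}^\e x^{-2}\,dx$ when $D_w(y)<\e$, and contributes nothing when $D_w(y)\ge\e$; this equals $\tfrac{1}{D_w(y)}-\tfrac1\e$ when $D_w(y)>0$ and equals $+\infty$ when $D_w(y)=0$.

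Carrying this out, and letting $\delta$ be the length of the interval $\{y\ge 0:\ D_w(y)<\e\}$ (equivalently $\delta=w^*(\e)$, which is finite for small $\e$ since $D_w(y)\to|I|$ as $y\to\infty$), I obtain the exact identity
\[
\int_0^\e x^{-2} w^*(x)\,dx = \int_0^\delta \frac{1}{D_w(y)}\,dy - \frac\delta\e .
\]
Because $\delta/\e<\infty$, the left-hand side is infinite if and only if $\int_0^\delta D_w(y)^{-1}\,dy$ is infinite, which is exactly the asserted equivalence of \eqref{aintegral} and \eqref{aint-1}. Moreover, as $\e$ ranges over small positive values $\delta=w^*(\e)$ also ranges over small positive values, so the ``for some (all)'' quantifiers in the two conditions match up.

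The only points requiring care — and where I expect the main (mild) obstacle to lie — are the degenerate situations concealed in the generalized-inverse identity. When $w$ is constant on a set of positive measure, $D_w$ has a jump, and when the essential infimum of $w$ on $I$ is positive one has $D_w(y)=0$ for all small $y$, whence $1/D_w=+\infty$ near $0$. In this last case both integrals are trivially $+\infty$ (indeed $w^*\ge\operatorname{ess\,inf} w>0$ already forces $\int_0^\e x^{-2}w^*\,dx=\infty$), consistently with the displayed identity. The advantage of the layer-cake formulation is precisely that it sidesteps all such regularity questions: it uses only the equivalence $w^*(x)>y\iff D_w(y)<x$ and Tonelli's theorem, so no continuity, monotone strictness, or differentiability of $D_w$ is needed.
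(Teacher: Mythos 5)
Your proof is correct, and it takes a genuinely different route from the paper's. The paper substitutes $y=w^*(x)$, $x=D_w(y)$ and integrates by parts in the Stieltjes sense, which produces the boundary term $-w^*(x)/x\bigm|_{\e_n}^{\e}$; to control that term the paper must first split off the case $w^*(x)\ge cx$ (where both integrals diverge trivially) and then work along a subsequence $\e_n$ with $w^*(\e_n)/\e_n\to 0$ so that the lower boundary contribution vanishes. Your layer-cake/Tonelli argument replaces this with the single exact identity
\[
\int_0^\e x^{-2}w^*(x)\,dx=\int_0^{\delta}\frac{dy}{D_w(y)}-\frac{\delta}{\e},\qquad \delta=w^*(\e),
\]
which needs no case split and no subsequence, only the generalized-inverse equivalence $w^*(x)>y\iff D_w(y)<x$ (valid for a.e.\ $y$ after Tonelli, which is all you use) together with the nonnegativity of the integrand. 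What the paper's approach buys is brevity and familiarity (a two-line change of variables); what yours buys is a quantitative identity showing the two quantities differ by the bounded term $\delta/\e$, and an argument that is manifestly insensitive to jumps or flat stretches of $D_w$ --- including the case $D_w\equiv 0$ near $0$, which you correctly note makes both sides $+\infty$. Your closing remark that the ``for some (all) $\e$'' and ``for some (all) $\delta$'' quantifiers correspond under $\delta=w^*(\e)$ is also a point the paper leaves implicit. No gaps.
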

\begin{proof}
If $w^*(x) \ge c x$, then   $D_w(y) \le Cy$ and both \eqref{aintegral} and \eqref{aint-1} are satisfied. So it is sufficient to consider the case when $\lim w^*(\e_n)/\e_n = 0$ for some sequence $\e_n\to 0^+$. 

Denoting $y=w^*(x)$, so $x=D_w(y)$, and integrating by parts, we get
\[
\int_{\e_n}^\e x^{-2} w^*(x) dx = - \int_{\e_n}^\e w^*(x) d(1/x) = - w^*(x)/x \Bigm|_{\e_n}^\e + \int_{\delta_n}^\delta x^{-1} dy, 
\] 
where $\delta = w^*(\e)$, $\delta_n=w^*(\e_n)$. Taking limit as $n\to\infty$ we can see that the conditions \eqref{aintegral} and \eqref{aint-1} are equivalent. 
\end{proof}

\begin{rem*}
Condition \eqref{aintegral} is satisfied if for small $x$, $w^*(x)\ge x$, or if $w^*(x) \ge c x \ln^{-p}(1/ x)$, $p<1$, or even if   
\[
w^*(x) \ge c x/\bigl[ \,(\ln 1/x) (\ln\ln 1/x) \ldots (\underbrace{\ln\ln\ldots \ln}_{m \text{ times}} 1/x)
(\underbrace{\ln\ln\ldots \ln}_{m+1 \text{ times}} 1/x)^{p}\,\bigr]
\]
($p<1$). 

Similarly, \eqref{aint-1} holds if for $p<1$ and $t\to\infty$
\[
|\{x\in I: 1/w(x)> t\}| \le C t^{-1} (\ln t) (\ln\ln t) \ldots (\underbrace{\ln\ln\ldots \ln}_{m \text{ times}} t)
(\underbrace{\ln\ln\ldots \ln}_{m+1 \text{ times}} t)^{p}. 
\]
\end{rem*}

\begin{proof}[Proof of Lemma \ref{eqinfty}]
Since $(T_\mu)_\e^* = -(T_\nu)_\e$, the operators $(T_\nu)_\e: L^2(\nu) \to L^2(\mu)$ are uniformly bounded, and therefore they are uniformly bounded as operators $L^2(\nu)\to L^2(w)$.  Therefore we can pick  subsequence $(T_\nu)_{\e_k}$, $\e_k \to 0^+$ which converges in the weak operator topology of $B(L^2(\nu), L^2(w))$. 

Since for any $f\in L^2(\nu)$ the Cauchy integral $Kf\nu$ exists a.e.~with 
respect to Lebesgue measure%
\footnote{It is not difficult to show that under assumptions of the lemma $\int \frac{|f(t)| d\nu(t)}{1+|t|}<\infty$, but one does not need to show that, because in the proof it is sufficient to consider only compactly supported $f$.},
Lemma \ref{wkeqae} implies that the corresponding weak limit is the operator $f\mapsto K f\nu$. 

Since this operator is clearly bounded, applying it to $f=\chi\ci{ I}$, we get 
\[
\int_I |K\nu_1|^2 w(x) d x \le \int_\R |K\nu_1|^2 w(x) dx \le C \|\chi\ci{ I}\|^2\ci{L^2(\nu)} = C\nu(\chi\ci{I})  ,  
\]
where $d\nu_1 = \chi\ci{I} d\nu$. 

Using the distribution function we get that 
\[
\int_I |K\nu_1|^2 w(x) d x = \int_0^\infty 2t \int_{\{|K\nu_1|>t\}\cap I} w(x) \,dx dt.
\]

Let us assume that $\nu$ has a nontrivial singular part on $ I$, i.e.~that $\nu_1$ has a nontrivial singular part.  By Corollary \ref{APOL-t}, we have $|\{|K\nu_1|>t\}\cap I|\ge C/ t>0$ for all sufficiently large $t$ ($t\ge A$ for some $A>0$).

Let $L= | {\{|K\nu_1|>t\}\cap I |}$. Since $C/t \le L$ for $t\ge A$,  we have 
\begin{align*}
\int_0^{C/t} w^*(x) dx
\le
\int_0^{L} w^*(x) dx
\le
\int_{\{|K\nu_1|>t\}\cap I} w(x) \,dx.
\end{align*}
Multiplying this inequality by $2t$ and integrating we get
\begin{align}
\label{4.2}
\int_A^\infty 2t  \int_0^{C/t} w^*(x) dx dt  
& \le
 \int_A^\infty  2t\int_{\{|K\nu_1|>t\}\cap I} w(x) \,dx dt
\\
\notag
& \le 
\int_0^\infty 2t \int_{\{|K\nu_1|>t\}\cap I} w(x) \,dx dt
\\  \notag
& = \int_I    |K\nu_1|^2 w(x) d x
\le C\nu(\chi\ci{\clos I}) 
<\infty.
\end{align}
Using Tonelli's theorem to change the order of integration, we can write the left side as
\begin{equation}
\label{4.3}
\int_A^\infty 2t  \int_0^{C/t} w^*(x) dx dt   = \int_0^{C/A} [(C/x)^2-A^2]w^*(x)dx.
\end{equation}
Clearly $\int_0^{C/A} w^*(x) dx <\infty$. So combining \eqref{4.2} and \eqref{4.3}, we get that if the measure $\nu_1$ has a non-trivial singular part, then 
\[
\int_0^\e x^{-2} w^*(x) dx < \infty, 
\]
where $\e= C/A$. 
\end{proof}

Let $A_\alpha = A +\alpha (\fdot, \f)\f$ be the family of rank one perturbations of the operator $A$ as described in Section \ref{setup}, and let $\mu_\alpha$ be their spectral measures (corresponding to $\f$), $\mu=\mu_0$ being the spectral measure of $A$.  

Theorem below is an immediate corollary of Lemma \ref{eqinfty}.

\begin{theo}
\label{mainTM}
Let $d\mu= wdt + d\mu\ti s$ be the Lebesgue decomposition of the spectral measure $\mu=\mu^\f$.

If for a bounded closed interval $I$ the distribution function $D_w= D_{w, I}$ satisfies \eqref{aint-1} (equivalently, its inverse $w^*$ satisfies \eqref{aintegral}), then 
 for all $\alpha\in\R\setminus\{0\}$ operator $A_\alpha$ has empty singular spectrum on $I$.
\end{theo}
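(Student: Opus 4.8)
The plan is to obtain Theorem \ref{mainTM} by specializing Lemma \ref{eqinfty} to the pair of spectral measures $\mu=\mu^\f$ and $\nu=\mu_\alpha$, so that almost all of the work is already done and only a verification of hypotheses remains. Recall that $A_\alpha$ is unitarily equivalent to multiplication by $s$ on $L^2(\mu_\alpha)$ with $\ID_s$ cyclic; consequently the spectral type of $A_\alpha$ is read off directly from $\mu_\alpha$, and the assertion that $A_\alpha$ has empty singular spectrum on $I$ is exactly the assertion that $(\mu_\alpha)\ti{s}$ vanishes on $I$.

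First I would check that the uniform-boundedness hypothesis of Lemma \ref{eqinfty} holds for $\nu=\mu_\alpha$. This is precisely the content of the first statement of Theorem \ref{reg-2}: for every $\alpha\neq0$ the regularized operators $T_\e=(T_\mu)_\e:L^2(\mu)\to L^2(\mu_\alpha)$ are uniformly bounded, with $\|T_\e\|\ci{L^2(\mu)\to L^2(\mu_\alpha)}\le 2|\alpha|^{-1}$. (Alternatively one could invoke Theorem \ref{reg-1} together with the Aronszajn--Donoghue fact that $\mu\ti{s}\perp(\mu_\alpha)\ti{s}$, but Theorem \ref{reg-2} already supplies the bound outright.)

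With this hypothesis in hand, I would simply apply Lemma \ref{eqinfty} with $\nu=\mu_\alpha$. The hypothesis of that lemma, namely that the increasing rearrangement $w^*=w^*_I$ of $w|\ci I$ satisfies \eqref{aintegral}, is exactly what is assumed in Theorem \ref{mainTM}, the equivalence of \eqref{aintegral} and \eqref{aint-1} having been established in the lemma immediately following Lemma \ref{eqinfty}. The conclusion of Lemma \ref{eqinfty} is that $\mu_\alpha$ has no singular part on $I$, which by the first paragraph is the desired statement.

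Since the two substantive ingredients, the uniform bound of Theorem \ref{reg-2} and the rearrangement estimate of Lemma \ref{eqinfty}, are already available, there is no real obstacle here; the only point requiring care is bookkeeping. One must confirm that the condition imposed on the density $w$ of the \emph{source} measure $\mu$ is precisely the one that controls the singular part of the \emph{target} measure $\mu_\alpha$ through Lemma \ref{eqinfty}, and that the conclusion holds simultaneously for every $\alpha\neq0$: this is so because, although the constant $2|\alpha|^{-1}$ in Theorem \ref{reg-2} depends on $\alpha$, it is finite for each fixed nonzero $\alpha$, which is all that the uniform-in-$\e$ boundedness hypothesis of Lemma \ref{eqinfty} requires.
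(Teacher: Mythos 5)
Your proposal is correct and follows exactly the paper's route: the authors state that Theorem \ref{mainTM} is an immediate corollary of Lemma \ref{eqinfty}, applied with $\nu=\mu_\alpha$, the uniform boundedness hypothesis being supplied by Theorem \ref{reg-2}. Your remarks on the $\alpha$-dependence of the constant and on reading the spectral type off $\mu_\alpha$ are the right bookkeeping and match the paper's (implicit) reasoning.
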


Let us state a similar result that incorporates the averages of the spectral measures into the hypothesis.

\begin{theo}[Averaged condition]\label{averaged}
For a finite Borel measure $\sigma$ on $\R$, define the average spectral measure $\tau = \int\mu_\beta\,d\sigma(\beta)$, and let $\tau = wdt + \tau\ti s$ be its Lebesgue decomposition. 

Let $E:=\{\alpha\in\R\,:\,\int\frac{d\sigma(\beta)}{|\alpha-\beta|^2}<\infty\}$. 

If for a bounded closed interval  $I$   the distribution function $D_w= D_{w, I}$ satisfies \eqref{aint-1}, 
then for all $\alpha\in E$ (in particular, for all $\alpha$ outside of the closed support of $\sigma$) operator $A_\alpha$  has empty singular spectrum on $I$.
\end{theo}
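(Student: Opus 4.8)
The plan is to deduce Theorem~\ref{averaged} from Lemma~\ref{eqinfty}, applied with source measure $\mu=\tau$ and target measure $\nu=\mu_\alpha$. The density of the absolutely continuous part of $\tau$ is exactly the $w$ appearing in the hypothesis, and $D_w$ is assumed to satisfy \eqref{aint-1} (equivalently $w^*$ satisfies \eqref{aintegral}); so the only thing that must be checked before invoking Lemma~\ref{eqinfty} is that the regularized operators $(T_\tau)_\e:L^2(\tau)\to L^2(\mu_\alpha)$ are uniformly bounded in $\e$. Once that is established, Lemma~\ref{eqinfty} immediately gives that $\mu_\alpha$ has no singular part on $I$, i.e.\ that $A_\alpha$ has empty singular spectrum on $I$. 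The final parenthetical claim is routine: if $\alpha$ lies outside $\clos(\supp\sigma)$, then $|\alpha-\beta|\ge\dist(\alpha,\supp\sigma)>0$ on $\supp\sigma$, whence $\int|\alpha-\beta|^{-2}\,d\sigma(\beta)\le\sigma(\R)\,\dist(\alpha,\supp\sigma)^{-2}<\infty$ and $\alpha\in E$.

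To obtain the uniform bound I would fix $\alpha\in E$ and exploit the affine structure of the family. Since $A_\alpha=A_\beta+(\alpha-\beta)(\fdot,\f)\f$ with $\f\equiv\ID$ in every $L^2(\mu_\beta)$, and since by Lemmas~\ref{cyc} and~\ref{l-falpha} the whole framework applies verbatim with $A_\beta$ as the unperturbed operator, Theorem~\ref{reg-2} (applied with base operator $A_\beta$ and coupling constant $\alpha-\beta$) gives, for each $\beta\ne\alpha$, that $(T_{\mu_\beta})_\e:L^2(\mu_\beta)\to L^2(\mu_\alpha)$ is uniformly bounded with $\|(T_{\mu_\beta})_\e\|\le 2|\alpha-\beta|^{-1}$. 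For compactly supported $f\in L^2(\tau)$ the kernel $1/(s-t+i\e)$ is bounded, so Fubini yields the pointwise identity
\[
(T_\tau)_\e f(s)=\int_\R (T_{\mu_\beta})_\e f(s)\,d\sigma(\beta),
\]
and Minkowski's integral inequality followed by the per-$\beta$ estimate gives
\[
\|(T_\tau)_\e f\|\ci{L^2(\mu_\alpha)}\le\int_\R\|(T_{\mu_\beta})_\e f\|\ci{L^2(\mu_\alpha)}\,d\sigma(\beta)\le 2\int_\R\frac{\|f\|\ci{L^2(\mu_\beta)}}{|\alpha-\beta|}\,d\sigma(\beta).
\]
The crucial cancellation comes from Cauchy--Schwarz in $\sigma$: the last integral is at most
\[
2\Bigl(\int_\R\frac{d\sigma(\beta)}{|\alpha-\beta|^2}\Bigr)^{1/2}\Bigl(\int_\R\|f\|\ci{L^2(\mu_\beta)}^2\,d\sigma(\beta)\Bigr)^{1/2},
\]
and by Tonelli together with $\tau=\int\mu_\beta\,d\sigma(\beta)$ the second factor equals $\|f\|\ci{L^2(\tau)}$, while the first is finite exactly because $\alpha\in E$. (Note $\alpha\in E$ forces $\sigma(\{\alpha\})=0$, so the value $\beta=\alpha$, where $2|\alpha-\beta|^{-1}$ degenerates, carries no $\sigma$-mass and is harmless.) This produces $\|(T_\tau)_\e f\|\ci{L^2(\mu_\alpha)}\le 2(\int|\alpha-\beta|^{-2}\,d\sigma(\beta))^{1/2}\|f\|\ci{L^2(\tau)}$ with a constant independent of $\e$; extending from compactly supported functions to all of $L^2(\tau)$ by density completes the uniform bound, and Lemma~\ref{eqinfty} finishes the proof.

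The main obstacle I anticipate is precisely this uniform boundedness step, i.e.\ passing from the fixed-base-point two-weight bounds of Theorem~\ref{reg-2} to a bound for the genuinely two-weight operator $(T_\tau)_\e$ between the averaged space $L^2(\tau)$ and $L^2(\mu_\alpha)$. The argument hinges on the linearity of the kernel integration in the source measure, which disintegrates $(T_\tau)_\e$ as a $\sigma$-average of the $(T_{\mu_\beta})_\e$, and on the isometric identity $\int\|f\|\ci{L^2(\mu_\beta)}^2\,d\sigma(\beta)=\|f\|\ci{L^2(\tau)}^2$, which matches the Cauchy--Schwarz bookkeeping to the defining condition of $E$. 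An alternative route would verify the bilinear-form estimate \eqref{3.1} for the pair $(\tau,\mu_\alpha)$ and then invoke Theorem~\ref{reg-1}, but that would additionally require checking $\tau\ti s\perp(\mu_\alpha)\ti s$ through the Aronszajn--Donoghue disjointness of the singular spectra; the direct averaging above sidesteps this and is cleaner.
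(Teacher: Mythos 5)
Your proposal is correct and follows essentially the same route as the paper: both reduce the theorem to the uniform boundedness of $(T_\tau)_\e:L^2(\tau)\to L^2(\mu_\alpha)$ needed for Lemma~\ref{eqinfty}, disintegrate the operator over $\beta$ using $\tau=\int\mu_\beta\,d\sigma(\beta)$, apply the per-$\beta$ bound $\|(T_{\mu_\beta})_\e\|\lesssim|\alpha-\beta|^{-1}$ coming from Theorem~\ref{reg-2}, and finish with Cauchy--Schwarz in $\sigma$ against the defining condition of $E$ together with the identity $\int\|f\|^2\ci{L^2(\mu_\beta)}\,d\sigma(\beta)=\|f\|^2\ci{L^2(\tau)}$. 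The paper phrases the estimate at the level of the bilinear form $((T_\tau)_\e f,g)$ while you use Minkowski's integral inequality on the norm, but this is the same computation in dual form; your added verifications (the parenthetical claim about $\alpha\notin\clos\supp\sigma$ and $\sigma(\{\alpha\})=0$ for $\alpha\in E$) are correct.
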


\begin{proof}
To apply Lemma \ref{eqinfty}, we need to show that for each $\alpha\in E$  the operators $T_\e= (T_\tau)_\e:L^2(\tau)\to L^2(\mu_\alpha)$ are uniformly bounded. 
Take $f\in L^2(\tau)$ and $g\in L^2(\mu_\alpha)$ and estimate
\begin{align*}
\left|((T_\tau)_\e f, g)\ci{L^2(\mu_\alpha)}\right|
& =
\left|\iint \frac{f(t)\overline{g(s)}}{s-t+i\e}\,d\tau(t)d\mu_\alpha(s)\right|\\
&=
\left|\iiint \frac{f(t)\overline{g(s)}}{s-t+i\e}\,d\mu_\beta(t)d\mu_\alpha(s)d\sigma(\beta)\right|\\
&\le
\int \|f\|\ci{L^2(\mu_\beta)} \|g\|\ci{L^2(\mu_\alpha)} |\alpha-\beta|^{-1} d\sigma(\beta)\\
&\le
\|g\|\ci{L^2(\mu_\alpha)}
\left(\int\frac{d\sigma(\beta)}{|\alpha-\beta|^2}\right)^{1/2}
\left(\int \|f\|^2\ci{L^2(\mu_\beta)} d\sigma(\beta)\right)^{1/2}.
\end{align*}
Note that the last factor on the right hand side is equal to $\|f\|\ci{L^2(\mu)} \sigma(\R)$ and recall that $\int\frac{d\sigma(\beta)}{|\alpha-\beta|^2}\le C<\infty$.
\end{proof}

\section{Some examples}\label{REXA}
Theorem \ref{mainTM} can be used to construct examples of rank one perturbations with weird behavior. Consider first an abstract situation. 

\subsection{Friedrichs model} 
\label{s5.1}
Let $\mu$ be a finite Borel measure supported on a finite closed interval $I$, and let $d\mu = wdt + d\mu\ti{s}$ be its Lebesgue decomposition. Let  operator $A$ be the multiplication $M_t$ by the independent variable $t$ in $L^2(\mu)$.

Let the density $w$ on the interval $I$ satisfy condition \eqref{aint-1}. Assume also that the closed support of $\mu\ti{s}$ coincides with $I$. Then, first of all, by Theorem \ref{mainTM}, the perturbed operators $A_\alpha := A+ \alpha (\fdot, \ID)\ID$ have no singular spectrum on $I$ for all $\alpha\ne0$. Of course, an eigenvalue outside of $I $ can appear. 

Second, the density $w_\alpha$ of the spectral measure $\mu_\alpha$ of $A_\alpha$ is highly irregular: It fails to satisfy condition \eqref{aint-1} on any subinterval of $I$. 

Indeed, one can write $A= A_\alpha -\alpha (\fdot, \ID)\ID$. Since the close support of the singular part of $\mu\ti{s}$ is the whole interval $I$, condition \eqref{aint-1} must fail for all its subintervals. 

If we consider perturbations $A_{\alpha_0} + \alpha (\fdot, \ID)\ID$ of the operator $A_{\alpha_0}$, $\alpha_0\ne0$, then we get a family of rank one perturbations for which the singular spectrum appears at exactly one value of the parameter $\alpha$ ($\alpha = - \alpha_0$). 

If the condition \eqref{aint-1} holds for any subinterval $J\Subset I$, then we can conclude that all perturbations $A_\alpha$ have no singular spectrum in the interior of $I$ (atoms  can appear at the endpoints). 

\subsection{Jacobi matrices} The same reasoning as above in Section \ref{s5.1} can be applied  to  Jacobi matrices. By a \emph{Jacobi matrix} we refer to a semi-infinite tridiagonal matrix of the form
\[
T:=
\begin{pmatrix}
b_1&a_1&0&\hdots&\hdots&\hdots\\
a_1&b_2&a_2&0&\hdots&\hdots\\
0&a_2&b_3&a_3&0&\hdots\\
\vdots&\ddots&\ddots&\ddots&\ddots&\ddots
\end{pmatrix}
\]
where $a_n>0$, $b_n\in\R$ for all $n\in\N$. The \emph{free Jacobi matrix} $T_0$, is the Jacobi matrix with $b_n=0$ and $a_n=1$ for all $n\in\N$. We assume also that $\sup_n |a_n|+|b_n|<\infty$, so a Jacobi matrix can be viewed as a bounded operator on $\ell^2=\ell^2(\N)$ (the \emph{Jacobi operator}).

As it is well-known, see e.g.~\cite{DEIFT}, that there is a one-to-one correspondence between compactly supported Borel measures on $\R$ satisfying the normalization condition $\mu(\R)=1$  and bounded Jacobi operators. Namely, any such measure is the spectral measure (corresponding to the cyclic vector $e_1$) of the corresponding  Jacobi matrix; here $\{e_n\}_{n=1}^\infty$ is the standard basis in $\ell^2$. 

So all that was said above in Section \ref{s5.1} about perturbations of multiplication operator can be trivially said about perturbations $T_\alpha$ of a Jacobi matrix $T$, $T_\alpha = T + \alpha(\fdot, e_1)e_1$; note that $T_\alpha$ is obtained from $T$ by replacing the entry $b_1$ in the Jacobi matrix by $b_1+\alpha$. 

What is more interesting, the same can be said about Jacobi matrices that are Hilbert--Schmidt perturbations  of the free Jacobi matrix, i.e.~about Jacobi matrices such that 
\[
\sum_{n=1}^\infty(a_n-1)^2+b_n^2<\infty.
\]
In \cite{KILLIPSIM} a complete description of spectral measures of such matrices was obtained.

\begin{theo}\label{KILLIPSIM} (Killip--Simon \cite{KILLIPSIM})
Let $J$ be a Jacobi matrix and $\mu$ be the corresponding spectral measure (corresponding to the vector $e_1$).

Operator $T-T_0$ is Hilbert--Schmidt if and only if all four conditions hold
\begin{itemize}
\item[(1)] Blumenthal--Weyl: $\supp\,d\mu=[-2,2]\cup\{\lambda^+_j\}\cup\{\lambda^-_j\},
$  where $\{\lambda^\pm_j\}$ denote the
sequences of eigenvalues of $J$ in $\R\backslash [-2,2]$  and $\lambda^+_1>\lambda^+_2>\hdots>2$ and $\lambda^-_1<\lambda^-_2<\hdots<-2$,
\item[(2)] Lieb--Thirring: $\sum_j(\lambda^+_j-2)^{3/2}+\sum_j(\lambda^-_j+2)^{3/2}<\infty$,
\item[(3)] Quasi-Szeg\"o: $\int_{-2}^2(4-t^2)^{1/2}\log (w(t))\,\dd t > - \infty$, where $w$ is the density function of $\mu$, i.e.~ $d\mu=w dt+d \mu\ti{s}$,
\item[(4)] Normalization: $\mu(\R)=1$.
\end{itemize}
\end{theo}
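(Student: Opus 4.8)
The plan is to prove the Killip--Simon theorem by the method of \emph{sum rules}, which convert the $\ell^2$-type condition on the coefficient differences $\{a_n-1,\,b_n\}$ into an equivalent condition on the spectral measure $\mu$. The basic dictionary is the $m$-function $m(z)=\int(x-z)^{-1}\,d\mu(x)$, transplanted to the unit disk by the conformal map $z=\zeta+\zeta^{-1}$ carrying $\{|\zeta|<1\}$ onto $\C\setminus[-2,2]$; under this map the band $[-2,2]$ becomes the unit circle, the eigenvalues $\la_j^\pm$ outside $[-2,2]$ correspond to points $\zeta_j\in(-1,1)$, and the a.c.\ density $w$ governs the boundary modulus of the perturbation determinant $L(\zeta)=\det\big((J-z)(T_0-z)^{-1}\big)$.

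I would first derive the decisive $P_2$ (Case) sum rule for the restricted class of Jacobi matrices with $a_n=1$, $b_n=0$ for all but finitely many $n$, where every quantity below is manifestly finite. The identity is obtained by computing $\log L(\zeta)$ two ways: a Taylor expansion at $\zeta=0$ whose first coefficients are explicit polynomials in the $a_n,b_n$, and a Poisson--Jensen factorization reading off the boundary values (through $w$) and the zeros (the $\zeta_j$). Matching the expansions yields an identity of the schematic shape
\[
\frac{1}{2\pi}\int_{-2}^2\log\frac{\sqrt{4-x^2}}{2\pi\,w(x)}\,\sqrt{4-x^2}\,dx\;+\;\sum_{\pm,j}F(\la_j^\pm)\;=\;\sum_n\Big[\tfrac14 b_n^2+\tfrac12 G(a_n)\Big],
\]
with $G(a)=a^2-1-\log a^2$ and $F$ an explicit function attached to each eigenvalue. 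The conceptual key is that \emph{every} term is sign-definite: $G\ge0$ with equality only at $a=1$, the Szeg\H{o} integral is a relative entropy hence $\ge0$, and $F\ge0$ with $F(\la)\asymp(|\la|-2)^{3/2}$ near the edges, so $\sum F(\la_j^\pm)$ is exactly the Lieb--Thirring sum (2). Thus on this restricted class each side is finite if and only if the other is.

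The forward direction then follows by a semicontinuity argument. If $J-T_0$ is Hilbert--Schmidt, then $\sum(a_n-1)^2+b_n^2<\infty$, and since $G(a)=(a-1)^2(1+o(1))$ as $a\to1$ the right-hand side is finite for $J$ itself. Approximating $J$ by its truncations $J^{(N)}$ (for which the exact rule holds) and using that the Szeg\H{o}/entropy functional is weak-$*$ lower semicontinuous along $\mu^{(N)}\to\mu$, I would upgrade the rule to the inequality stating that the spectral side is dominated by the coefficient side; finiteness of the latter then forces the quasi-Szeg\H{o} integral (3) and the eigenvalue sum (2) to be finite. Condition (1) is Weyl's theorem---a trace-class (indeed compact) perturbation preserves the essential spectrum $[-2,2]$ and can only add discrete eigenvalues accumulating at $\pm2$---and (4) is the normalization $\mu(\R)=1$.

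The hard part is the reverse direction, and the obstacle is precisely that the semicontinuity used above runs the wrong way for it: bounding $\sum[\tfrac14 b_n^2+\tfrac12 G(a_n)]$ from above requires controlling $\limsup_N$ of the Szeg\H{o} term, whereas only $\liminf_N$ is controlled by lower semicontinuity. To circumvent this I would use the \emph{step-by-step} sum rule, which tracks how each side of the identity changes when a single coefficient is restored, and establish the reverse inequality (coefficient side $\le$ spectral side) by an induction-plus-limit argument in which the entropy semicontinuity is now invoked in its favorable direction. Granting this, conditions (2) and (3) make the spectral side finite, whence $\sum[\tfrac14 b_n^2+\tfrac12 G(a_n)]<\infty$; since $G(a)\ge c(a-1)^2$ near $a=1$ and $G(a)\to\infty$ otherwise, this is exactly $\sum(a_n-1)^2+b_n^2<\infty$, i.e.\ $J-T_0$ is Hilbert--Schmidt.
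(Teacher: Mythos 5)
This statement is not proved in the paper at all: it is quoted verbatim as a known result of Killip and Simon \cite{KILLIPSIM} and used as a black box to construct examples, so there is no internal proof to compare your argument against. Judged on its own terms, your proposal correctly reproduces the architecture of the actual Killip--Simon proof --- the conformal map $z=\zeta+\zeta^{-1}$, the perturbation determinant, the $P_2$ (Case) sum rule with the sign-definite terms $\tfrac14 b_n^2+\tfrac12 G(a_n)$, $G(a)=a^2-1-\log a^2$, the quasi-Szeg\H{o} entropy term, and the eigenvalue functional $F(\la)\asymp(|\la|-2)^{3/2}$, together with the observation that semicontinuity of the entropy only runs one way and must be supplemented by the step-by-step sum rule for the converse. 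That is the right, and essentially the only known, route.

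However, as a proof it has genuine gaps rather than omitted routine details: every hard step is asserted. Concretely, (i) you need an a priori bound on $\sum_j(|\la_j^\pm|-2)^{3/2}$ \emph{before} the sum rule can be passed to the limit --- this is the Hundertmark--Simon $3/2$ Lieb--Thirring inequality for Jacobi matrices, a separate nontrivial theorem without which the eigenvalue side of your limiting inequality is not under control (eigenvalues can accumulate at $\pm2$ and truncations $J^{(N)}$ can have eigenvalues wandering in and out of $[-2,2]$); (ii) the semicontinuity of the entropy term must be stated for the correct functional and topology (it is the \emph{relative} entropy $S(\mu_{\mathrm{ac}}\,|\,\cdot)$ that is jointly weak-$*$ upper semicontinuous, and one must verify that $\mu^{(N)}\to\mu$ weak-$*$, which requires the coefficient convergence you are assuming); (iii) the ``step-by-step'' sum rule and the induction-plus-limit argument for the converse is the technical heart of the Killip--Simon paper and cannot be granted in one sentence --- in particular one must handle the case $w=0$ on a set of positive measure and show the coefficient side cannot blow up along the induction. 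So: right strategy, accurate roadmap, but the three items above are precisely where the length of the original proof lives, and none of them is supplied.
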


It is easy to see, that one can construct a measure $\mu$ satisfying all four conditions of Theorem \ref{KILLIPSIM}, and such that  condition \eqref{aint-1} is satisfied for the interval $[-2, 2]$. So, the reasoning of the previous subsection applies to this case and the perturbations $T_\alpha$ of $T$ have no singular spectrum on $[-2, 2]$. Considering perturbations of $T_{\alpha_0}$, $\alpha_0\ne0$, one comes up with the example of a family of rank one perturbations $T_{\alpha_0} + \alpha (\fdot, e_1)e_1$ such that the singular spectrum on $\sigma\ti{ess}(T)$ appears only for one value of $\alpha$. Note, operator $T_{\alpha_0}$ is a Hilbert--Schmidt perturbation of the free Jacobi matrix.

\subsection{Schr\"odinger operators} The same idea as in Section \ref{s5.1} can be applied to  \emph{(half-line) Schr\"odinger operators} $H:=-\frac{d^2 }{dx^2}+V$ with $L^2$ potentials ($V\in L^2(\R_+)$) on $L^2(\R_+)$, $\R_+:=(0,\infty)$.

Let us recall that for a formal differential operator $H=H_V = -\frac{d^2 }{dx^2}+V$ on $\R_+$, $V\in L^2(\R_+)$ one can define a family of self-adjoint operators $H_\vartheta$ on $L^2(\R_+)$ defined by the boundary condition at $0$; these operators differ by their respective domains,  
\begin{align*}
D(H_\vartheta)=
&\{u\in L^2(\R^+):u,u'\text{ are locally absolutely continuous, }\\
&\,\,u(0)\cos(\vartheta)+u'(0)\sin(\vartheta)=0\quad \text{for }0\le\vartheta<\pi, \quad H_\vartheta u\in L^2(\R^+)\}.
\end{align*}
Note that $\vartheta=0$ corresponds to the Dirichlet boundary condition and $\vartheta=\pi/2$ corresponds to the Neumann boundary condition. Recall that if $V\in L^2(\R^+)$, then $H$ is limit point, see e.g.~\cite{Naim}, meaning that Dirichlet boundary conditions (and so the boundary conditions for $H_\vartheta$)  define a self-adjoint operator.

A recent theorem of Killip and Simon \cite{SKILLIPSIM}
gives a complete description of spectral measures of Schr\"odinger operators with $L^2$ potentials (with Dirichlet boundary condition).  Without stating Killip--Simon theorem here, we will only mention that it is not hard to construct a measure $\mu$ satisfying the conditions of this theorem and such that its weight $w$ satisfies  condition \eqref{aint-1}  for all bounded intervals $I\subset [0, \infty)$. Moreover, it is not hard to show that the singular part of $\mu$ can be essentially arbitrary, i.e.~given a singular Radon measure $\tau$ on $\R_+$ one can find $\mu$ satisfying the conditions of the Killip--Simon theorem and such that the singular part $\mu\ti{s}$ of $\mu$ is mutually absolutely continuous with $\tau$ (and the density $w$ satisfies \eqref{aint-1}). 

It is well-known that the Schr\"odinger operators with mixed boundary conditions are viewed as self-adjoint  rank one perturbations of the Schr\"odinger $H_0$ operator with Dirichlet boundary conditions.

Unfortunately, our results cannot be applied directly, because to get from the $H_0$ to $H_\vartheta$  the perturbation should be written as $H_0 + \alpha(\vartheta) (\fdot, \delta_0')\delta_0'$, where $\delta_0'$ is the derivative of delta function at zero. The spectral measure, which is traditionally defined via the Weil $M$-function, is also the spectral measure with respect to $\delta'_0$.  But vector $\delta'_0$ is not in $\cH_{-1}(H_0)$, one can only prove that it is in $\cH_{-2}(H_0)$. However, there is a simple workaround: one just needs to consider resolvents. 

Namely, fix $\vartheta$ and consider $\la<0$ which is not an eigenvalue of $H_0$ or $H_\vartheta$. Then the difference of the resolvents can be formally written as 
\begin{align}\label{schrres}
&(H_\vartheta-\lambda\OID)^{-1} =(H_0-\lambda\OID)^{-1} +\wt\alpha(\vartheta)(\fdot, (H_0-\lambda\OID)^{-1}\delta'_0)(H_0-\lambda\OID)^{-1}\delta'_0
\end{align}
where $\wt\alpha(\vartheta) = \alpha(\vartheta) /\left[1 + \alpha(\vartheta) \left((A-\la \OID)^{-1} \delta'_0, \delta'_0 \right) \right] $. The fact that the difference of resolvents is a rank one operator follows from the standard theory of differential operators, and knowing the resolvent one defines the operator. Thus one can avoid rather complicated construction of rank one perturbations with $\f\in \cH_{-2}$. This case is described, for example, in \cite{AK1}.

The 
spectral measure $\nu$ of the resolvent $(H_0 -\la \OID)^{-1}$ can be easily computed from the spectral measure $\mu$ of $H_0$, and it is clear that if the density of $\mu$ satisfies the assumption \eqref{aint-1} on any subinterval $I\subset [0, \infty)$, then the density of $\nu$ satisfies the same condition \eqref{aint-1} for any subinterval of $(0, -1/\la]$.

So, one can apply Theorem \ref{mainTM}  to the resolvents. In doing so one can obtain all the phenomena discussed in Section \ref{s5.1}. For example, one can get $H=-\frac{d^2 }{dx^2}+V$, $V\in L^2(\R_+)$ such that $H_0$ (Dirichlet boundary conditions) has dense in $\R_+$ singular spectrum, but for all other boundary conditions the operators $H_\vartheta$ have no singular spectrum on $\R_+$. And the density of the spectral measure of $H_\vartheta$ will exhibit some weird behavior: In particular, it will not satisfy condition \eqref{aint-1} on any bounded subinterval of $\R_+$.

\providecommand{\bysame}{\leavevmode\hbox to3em{\hrulefill}\thinspace}
\providecommand{\MR}{\relax\ifhmode\unskip\space\fi MR }
\providecommand{\MRhref}[2]{%
  \href{http://www.ams.org/mathscinet-getitem?mr=#1}{#2}
}
\providecommand{\href}[2]{#2}

\end{document}